\def\F{\mathbb{F}}
\def\N{\mathbb{N}}
\def\Q{\mathbb{Q}}
\def\Z{\mathbb{Z}}
\def\id{{\rm id}}
\def\Aut{{\rm Aut}}
\def\Hom{{\rm Hom}}
\def\sgn{{\rm sgn}}
\def\Ind{{\rm Ind}}
\def\Gal{{\rm Gal}}
\def\Disc{{\rm Disc}}
\def\GL{{\rm GL}}
\def\Spec{{\rm Spec}\,\,}
\def\triv{{\rm triv}}
\def\std{{\rm std}}
\def\sgn{{\rm sgn}}
\def\Tr{{\rm Tr}}
\def\bij{{\rm Bij}}
\def\Hom{{\rm Hom}}
\def\O{{\mathcal O}}
\def\binom#1#2{{#1 \choose #2}}
\newtheorem{theorem}{Theorem}
\newtheorem{lemma}[theorem]{Lemma}
\newtheorem{proposition}[theorem]{Proposition}
\newtheorem{definition}[theorem]{Definition}
\newtheorem{remark}[theorem]{Remark}
\newtheorem{question}{Question}
\newenvironment{theorem1'}{\noindent {\bf Theorem 1${}^{\prime}$}}
\newenvironment{theorem3'}{\noindent {\bf Theorem 3${}^{\prime}$}}
\newenvironment{theorem4'}{\noindent {\bf Theorem 4${}^{\prime}$}}
\newenvironment{theorem5'}{\noindent {\bf Theorem 5${}^{\prime}$}}
\newcommand{\tr}{\triangleright}
\newcommand{\OO}{\mathcal{O}}
\newenvironment{proof}{\noindent {\bf Proof:}}{$\Box$ \vspace{2 ex}}
\newenvironment{proofl}{\noindent {\bf Proof of Lemma \ref{l:incexc}:}}{$\Box$ \vspace{2 ex}}
\newenvironment{prooft}{\noindent {\bf Proof of Theorem \ref{cor:reg}:}}{$\Box$ \vspace{2 ex}}
\newenvironment{prooft2}{\noindent {\bf Proof of Theorem \ref{newth}:}}{$\Box$ \vspace{2 ex}}
\title{{On a notion of ``Galois closure'' for extensions of rings}}
  \author{Manjul Bhargava and Matthew Satriano}
\begin{document}
\maketitle 

\section{Introduction}
%In our first two articles [] and [], we developed laws of composition
%on various spaces of forms.

Let $A$ be any {ring of rank $n$ over a base ring $B$}, i.e., a
$B$-algebra that is free of rank $n$ as a $B$-module.  In this
article, we investigate a natural definition for the ``Galois
closure'' $G(A/B)$ of the ring $A$ as an extension of
$B$.\footnote{All rings are assumed to be commutative with unity.}

The definition is as follows.  For an element $a\in A$, let
\begin{equation}\label{pa}
P_a(x)=x^n-s_1(a)x^{n-1}+s_2(a) x^{n-2}+\cdots+(-1)^n s_n(a)
\end{equation}
be the characteristic polynomial of $a$, i.e., the characteristic polynomial
of the $B$-module transformation $\times a:A\to A$ given by
multiplication by $a$.  Furthermore, for an element $a\in A$, let
$a^{(1)}$, $a^{(2)}$, \ldots, $a^{(n)}$ denote the elements 
$a\otimes 1\otimes 1\otimes\cdots\otimes1$, 
$1\otimes a\otimes 1\otimes\cdots\otimes1$, $\ldots$, 
$1\otimes 1\otimes 1\otimes\cdots\otimes a$ in $A^{\otimes n}$
respectively. Let $I(A,B)$ denote the ideal in $A^{\otimes n}$ generated 
by all expressions of the form
\begin{equation}\label{fundrels}
s_j(a)\;-\sum_{1\leq i_1< i_2< \ldots< i_j\leq n}
a^{(i_1)}a^{(i_2)}\cdots a^{(i_j)}\end{equation}
where $a\in A$ and $j\in\{1,2,\ldots,n\}$.  
Note that the symmetric group $S_n$ naturally acts on $A^{\otimes n}$
by permuting the tensor factors, and the ideal $I(A,B)\subset A^{\otimes n}$
is preserved under this $S_n$-action.  We are interested in imposing on $A^{\otimes n}$ the relations in $I(A,B)$ defined by (\ref{fundrels}) 
because they are precisely the relations that the conjugates $a^{(i)}$ of a generic element $a$ in a separable field extension of degree~$n$ would satisfy in a normal closure.  Alternatively, they are the general relations that the eigenvalues $a^{(i)}$ of a linear transformation of a vector space of dimension $n$ would satisfy.

We define
\begin{equation}\label{gcdef}
G(A/B)=A^{\otimes n}/I(A,B),
\end{equation}
and we call $G(A/B)$ the {\it $S_n$-closure} of $A$ over $B$.  Since $I(A,B)$ is $S_n$-invariant,
we see that the action of $S_n$ on $A^{\otimes n}$ also descends to an
$S_n$-action on $G(A/B)$.  One easily checks (or see Theorem~\ref{fieldcase}
below) that if $A/B$ is a degree $n$ extension of fields having
associated Galois group $S_n$, then $G(A/B)$ is indeed simply the
Galois closure of $A$ as a field extension of $B$.  Thus our
definition of $S_n$-closure in a sense naturally extends the usual
notion of Galois closure to rank $n$ ring extensions.

In fact, our definition 
above also naturally extends to $B$-algebras $A$ that are
{\it locally free of rank}~$n$. A $B$-module $M$ is said to be 
{locally free of rank~$n$} if there exist $b_1,\dots,b_m\in B$ such 
that $\sum Bb_i=B$ and $B_{b_i}\otimes_B M$ is free of 
rank $n$ over the localization $B_{b_i}$.\footnote{The
%We  note (see, e.g., \cite[Thm.~4.6]{Lenstra}) 
%that the
%We note  that the
condition that {\it $M$ is locally free of rank $n$ as a $B$-module} 
is also equivalent to either of the following two natural conditions:
(a) $M$ is finitely generated and projective of constant rank $n$
as a $B$-module; 
(b) $M$ is finitely presented and $M_{\mathfrak{m}}$ is free of rank $n$ as a
$B_{\mathfrak{m}}$-module for all maximal ideals $\mathfrak{m}$ of $B$.
\,(See, e.g., \cite[Thm.~4.6]{Lenstra}.)}
%we can define the characteristic polynomial $P_a$ of an element $a\in A$ as follows.  First, 
For such $M$, we have a natural isomorphism 
\begin{equation}\label{iso} 
M\otimes_{B}\textrm{Hom}_B(M,B)\to \textrm{End}_B(M), 
\end{equation} 
where for $B$-modules $N,N'$ we use $\textrm{Hom}_B(N,N')$ to 
denote the set of $B$-module homomorphisms from $N$ to $N'$, and we use
$\textrm{End}_B(N)$ to denote $\textrm{Hom}_B(N,N)$.  Indeed, (\ref{iso}) 
gives an isomorphism locally on $B_{b_i}$ (since $B_{b_i}\otimes_B M$ is free over 
$B_{b_i}$), 
and hence it is an isomorphism globally.  Next, if $f$ is any $B$-module 
endomorphism of $M$, then the trace of $f$ is defined to be the image of 
$f$ under the canonical map 
\[ \Tr:\textrm{End}_B(M)\cong 
M\otimes_{B}\textrm{Hom}_B(M,B)\to B. \] 
%the first isomorphism holds by \cite[??]{}.  
Finally, if $A$ is a $B$-algebra which is locally free of rank $n$, then 
given an element $a\in A$, we obtain a 
$B$-module endomorphism of $A$ given by $\times a:A\to A$. We let $s_j(a)$ 
be the trace of the induced $B$-module endomorphism of $\bigwedge^j A$.  
%, which is locally projective by \cite[??]{}.  
Note that for such $A$, it makes sense to speak of the characteristic polynomial 
$P_a$ of an element $a\in A$, and that the {\it Cayley-Hamilton
Theorem} carries over to this setting as $P_a(a)$ is locally zero, hence 
globally zero.  We can then define $I(A,B)$ and $G(A/B)$ as in 
(\ref{fundrels}) and (\ref{gcdef}).

The notion of $S_n$-closure has a number of interesting properties,
which we consider in this article.  First, we note that the 
%first property that should be
%mentioned is that the 
$S_n$-closure construction is clearly functorial in $A$ for $B$-algebra morphisms.
%, i.e., if $A$ and $A'$ are 
%rings of rank $n$ over a base ring $B$, then any morphism $A\to A'$ of $B$-algebras 
%induces a morphism $G(A/B)\to G(A'/B)$ of $B$-algebras in the natural way.  Furthermore, 
The first nontrivial property that should be mentioned is that the $S_n$-closure construction commutes with base change:

\begin{theorem}
\label{thm:functorial}
If $A$ is a ring of rank $n$ over $B$, and $C$ is a $B$-algebra, then there is a 
natural isomorphism
\[
G(A/B)\otimes_B C\simeq G((A\otimes_B C)/C)
\]
of $C$-algebras.
%, sending the image of $a^{(1)}\otimes 1$ to the image of $a
\end{theorem}

Next, in the case of an extension of fields, we have

\begin{theorem}\label{fieldcase}
Let $B$ be a field, and suppose $A$ is a separable 
field extension of $B$ of degree $n$.  Let $\widetilde{A}$
be a Galois closure of $A$ over $B$, and let 
$r=\frac{n!}{{\rm deg}({\widetilde{A}/B})}$.  Then 
\[G(A/B)\cong\widetilde{A}^{r}
%\textstyle{\frac{k!}{{\rm deg}(\widetilde{A}/B)}}}.
\]
as $B$-algebras.  
\end{theorem}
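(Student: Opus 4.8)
The plan is to reduce to the split étale case using Theorem~\ref{thm:functorial}, carry out an explicit idempotent computation there, and then descend. Concretely: choose a primitive element $\theta$ for $A/B$, with minimal polynomial $f$ of degree $n$. Since $\widetilde{A}$ is the Galois closure, $f$ splits completely over $\widetilde{A}$, so $A\otimes_B\widetilde{A}\cong\widetilde{A}[x]/(f)\cong\widetilde{A}^{X}$, where $X$ is the set of $B$-algebra embeddings $A\hookrightarrow\widetilde{A}$ (equivalently the set of roots of $f$ in $\widetilde{A}$), $|X|=n$, and $\widetilde{A}^{X}$ denotes the ring of $\widetilde{A}$-valued functions on $X$, i.e.\ the product of $n$ copies of $\widetilde{A}$. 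This isomorphism is equivariant for the natural actions of $G:=\Gal(\widetilde{A}/B)$: on the left $G$ acts through the second tensor factor, and on the right it acts by $(\gamma\cdot\varphi)(x)=\gamma\bigl(\varphi(\gamma^{-1}x)\bigr)$, using the action of $G$ on $X$ by postcomposition, $\gamma\cdot\psi=\gamma\circ\psi$. This last action is \emph{faithful} precisely because $\widetilde{A}$ is the Galois closure of $A$ (an element of $G$ fixing every $\psi(A)$ fixes their compositum $\widetilde{A}$). Applying Theorem~\ref{thm:functorial} with $C=\widetilde{A}$ gives $G(A/B)\otimes_B\widetilde{A}\cong G(\widetilde{A}^{X}/\widetilde{A})$, $G$-equivariantly, so it suffices to compute $G(\widetilde{A}^{X}/\widetilde{A})$ together with its $G$-action.

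The heart of the argument is this split computation (which in fact works over any base ring $R$ in place of $\widetilde{A}$). Let $e_x$, $x\in X$, be the standard idempotents of $R^{X}$. Multiplication by $e_x$ is a rank-one idempotent endomorphism of $R^{X}$, so its characteristic polynomial is $t^{n-1}(t-1)$; hence $s_1(e_x)=1$ and $s_j(e_x)=0$ for $j\geq 2$. Identify $(R^{X})^{\otimes n}$ with $R^{X^{n}}$, a product of copies of $R$ indexed by maps $\vec{x}\colon\{1,\dots,n\}\to X$, under which $e_x^{(i)}$ becomes the indicator of $\{\vec{x}:x_i=x\}$. Then the generator~(\ref{fundrels}) for $a=e_x$, $j=1$, namely $1-\sum_i e_x^{(i)}$, has value $1-\#\{i:x_i=x\}$ at the point $\vec{x}$; requiring all these generators to vanish at $\vec{x}$ forces $\vec{x}$ to be a bijection. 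Conversely, writing a general $a=\sum_{y\in X}c_y e_y$ so that $s_j(a)$ is the $j$-th elementary symmetric function of $(c_y)_{y\in X}$ and $a^{(i)}$ has value $c_{x_i}$ at $\vec{x}$, one checks that every generator~(\ref{fundrels}) vanishes at every bijection $\vec{x}$, since $\{x_i:i\in S\}$ runs over all $j$-subsets of $X$ as $S$ runs over the $j$-subsets of $\{1,\dots,n\}$. Therefore $G(R^{X}/R)\cong R^{Y}$, where $Y$ is the set of bijections $\{1,\dots,n\}\to X$; the group $S_n$ acts on $Y$ by precomposition (simply transitively), and, when $R=\widetilde{A}$, the group $G$ acts on $Y$ by postcomposition through $G\hookrightarrow\Sym(X)$ and on $R^{Y}$ in the analogous diagonal fashion.

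Finally I would run Galois descent. Combining the two steps, $G(A/B)\otimes_B\widetilde{A}\cong\widetilde{A}^{Y}$ as $\widetilde{A}$-algebras carrying compatible $G$-actions, so $G(A/B)\cong(\widetilde{A}^{Y})^{G}$. Because $G\hookrightarrow\Sym(X)$ is injective, the postcomposition action of $G$ on $Y$ is \emph{free}: if $\gamma\circ y=y$ with $y$ surjective, then $\gamma=\id$. Hence $Y$ decomposes into $|Y|/|G|=n!/\deg(\widetilde{A}/B)=r$ free $G$-orbits, and for each orbit $Y_j=G\cdot y_j$ the evaluation map $\varphi\mapsto\varphi(y_j)$ is a $B$-algebra isomorphism $(\widetilde{A}^{Y_j})^{G}\cong\widetilde{A}$ (well defined and surjective precisely because $G$ acts freely on $Y_j$). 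Summing over the $r$ orbits yields $G(A/B)\cong(\widetilde{A}^{Y})^{G}\cong\widetilde{A}^{r}$ as $B$-algebras, as claimed.

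I expect the main obstacle to be the split computation of the second paragraph: making the identification $(R^{X})^{\otimes n}\cong R^{X^{n}}$ precise, tracking the images of the generators~(\ref{fundrels}) under it, and verifying that the ideal they generate is exactly the ideal of functions vanishing on the set of bijections (the easy direction being that these generators vanish there, the harder one being that nothing else survives). The bookkeeping of the various $G$-actions through the reduction is routine but must be recorded carefully so that the final descent step is legitimate.
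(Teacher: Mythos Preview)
Your argument is correct and follows essentially the same three-step plan as the paper: base change to a splitting field via functoriality, compute the $S_n$-closure in the split case by an idempotent analysis, and then descend using the Galois action. Your split computation in the second paragraph is exactly the content of the paper's Section~\ref{bnsec} (so you could simply cite that rather than redo it), and your tracking of the $G$-action on the set of bijections matches the paper's formula $(\tau(\pi))(j)=\tau(\pi(j))$.

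The only stylistic difference is in the descent step. The paper base-changes all the way to a separable closure $\bar{K}$ and then invokes the equivalence between finite \'etale $K$-algebras and finite $G_K$-sets (so that the final identification $G(L/K)\cong M^r$ becomes a comparison of Galois sets, namely $S_n$ versus a disjoint union of cosets $G\cdot a$). You instead base-change only to the finite Galois closure $\widetilde{A}$ and do ordinary finite Galois descent by hand, exploiting freeness of the $G$-action on $Y$ to read off the $r$ copies of $\widetilde{A}$. Your route is slightly more elementary and self-contained; the paper's route has the advantage that the Galois-set formalism is reused verbatim in Section~\ref{etalesec} to handle the general \'etale case via $\pi_1^{\textrm{\'et}}$.
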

%\pagebreak
In particular, if ${\rm deg}(\widetilde{A}/B)=n!$ $($i.e.,
Gal($\widetilde{A}/B)=S_n)$, then $G(A/B)\cong\widetilde{A}$ as 
$B$-algebras.

We next consider the case where $B$ is {\it monogenic} 
over $A$, i.e., $A$ is
generated by one element as a $B$-algebra.  Then we have

\begin{theorem}\label{monocase}
  Suppose $A$ is a ring of rank $n$ over $B$ such that $A=B[\alpha]$
  for some $\alpha\in A$.  
%that is generated by one element $\alpha\in A$ as a $B$-algebra.  
Then 
%the $S_n$-closure
  $G(A/B)$ is a ring of rank $n!$ over $B$.  More generally, if $A$ is
  locally free of rank $n$ over $B$ and is locally generated by one
  element, then $G(A/B)$ is locally free of rank $n!$ over $B$.
\end{theorem}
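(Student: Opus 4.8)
The plan is to reduce immediately to the monogenic case $A = B[\alpha]$, since the "locally free, locally monogenic" assertion follows from it by the functoriality of $S_n$-closure (Theorem~\ref{thm:functorial}): base-changing to the localizations $B_{b_i}$ on which $A$ becomes free and monogenic, we get $G(A/B)\otimes_B B_{b_i} \cong G(A\otimes_B B_{b_i} / B_{b_i})$, which is free of rank $n!$ over $B_{b_i}$ by the monogenic case; a module that is locally free of rank $n!$ is locally free of rank $n!$. So assume $A=B[\alpha]$, and let $P(x)=P_\alpha(x)$ be the characteristic polynomial of $\alpha$; since $A$ is free of rank $n$ with basis $1,\alpha,\dots,\alpha^{n-1}$ and $P(\alpha)=0$ by Cayley–Hamilton, in fact $A\cong B[x]/(P(x))$, with $P$ monic of degree $n$.

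The key computation is to give an explicit presentation of $G(A/B)=A^{\otimes n}/I(A,B)$. Writing $\alpha^{(i)}$ for the image of $\alpha$ in the $i$-th tensor factor, we have $A^{\otimes n}\cong B[x_1,\dots,x_n]/(P(x_1),\dots,P(x_n))$, with $x_i\mapsto \alpha^{(i)}$. I claim that modulo $I(A,B)$ the relations $P(x_i)=0$ together with the generators~(\ref{fundrels}) of $I(A,B)$ are equivalent to the single factorization relation
\[
P(x) \;=\; \prod_{i=1}^n (x - x_i)
\]
as an identity in $B[x_1,\dots,x_n][x]$ (equivalently, $s_j(\alpha)$ equals the $j$-th elementary symmetric polynomial $e_j(x_1,\dots,x_n)$ for all $j$). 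Indeed, taking $a=\alpha$ in~(\ref{fundrels}) gives exactly $s_j(\alpha)=e_j(x_1,\dots,x_n)$ for $j=1,\dots,n$, which is the coefficient-by-coefficient form of $P(x)=\prod(x-x_i)$; and conversely, once we know $s_j(\alpha)=e_j(x_i)$, every other relation $s_j(a)-\sum_{i_1<\dots<i_j} a^{(i_1)}\cdots a^{(i_j)}$ with $a=g(\alpha)$ (a polynomial in $\alpha$) is a polynomial identity in $x_1,\dots,x_n$ that is a formal consequence of $\prod(x-x_i)=P(x)$ — this is the statement that, universally, $s_j$ of multiplication by $g(\alpha)$ on $B[x]/P$ is the $j$-th elementary symmetric function of $g(x_1),\dots,g(x_n)$ whenever $P=\prod(x-x_i)$, which one checks over $\Z[e_1,\dots,e_n]$ (or $\Z[x_1,\dots,x_n]$) where it is a classical fact about resultants/companion matrices. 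Also, $P(x_i)=\prod_{k}(x_i-x_k)=0$ is automatic once $P(x)=\prod(x-x_k)$. Hence
\[
G(A/B)\;\cong\; B[x_1,\dots,x_n]\big/ \bigl(e_j(x_1,\dots,x_n) - s_j(\alpha)\ :\ 1\le j\le n\bigr),
\]
i.e. $G(A/B)$ is the quotient of the polynomial ring in $n$ variables by the ideal generated by $e_j - c_j$, where $c_j=s_j(\alpha)\in B$ are the coefficients of $P$.

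Finally, I must show this quotient is free of rank $n!$ as a $B$-module. Over $\Z[c_1,\dots,c_n]$ (the "generic" base), the ring $R_n:=\Z[c_1,\dots,c_n][x_1,\dots,x_n]/(e_j-c_j)$ is isomorphic to $\Z[x_1,\dots,x_n]$ itself, via $c_j\mapsto e_j(x)$: this is just the statement that $\Z[x_1,\dots,x_n]$ is free of rank $n!$ over its subring of symmetric polynomials $\Z[e_1,\dots,e_n]$, with an explicit basis given by the "sub-staircase" monomials $\{x_1^{a_1}\cdots x_n^{a_n} : 0\le a_i \le n-i\}$ (the standard basis, e.g. via Gröbner bases with respect to lex order, or Artin's theorem on symmetric functions). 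Since our $G(A/B)$ is obtained from $R_n$ by the base change $\Z[c_1,\dots,c_n]\to B$, $c_j\mapsto s_j(\alpha)$, and freeness of a module with an explicit basis is preserved under arbitrary base change, $G(A/B)$ is free over $B$ with the same explicit basis of $n!$ monomials. The main obstacle is the verification in the middle paragraph that all of~(\ref{fundrels}) — not just the instances with $a=\alpha$ — already lies in the ideal generated by the relations $e_j=s_j(\alpha)$; this is where one needs the clean identity expressing $s_j$ of multiplication-by-$g(\alpha)$ in terms of symmetric functions of the roots, valid formally once $P$ splits, and it is cleanest to prove it once and for all over $\Z[x_1,\dots,x_n]$ and then specialize.
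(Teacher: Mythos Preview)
Your proposal is correct and follows essentially the same approach as the paper's proof (Theorem~\ref{mon} in Section~\ref{monosec}): both identify $G(A/B)$ with $B[x_1,\dots,x_n]/(e_j-s_j(\alpha))$ and then invoke the classical fact that $\Z[x_1,\dots,x_n]$ is free of rank $n!$ over $\Z[e_1,\dots,e_n]$ (with the staircase-monomial basis), together with base change from the universal situation. Your treatment of the ``main obstacle''---that the relations~(\ref{fundrels}) for arbitrary $a=g(\alpha)$ already lie in the ideal $(e_j-s_j(\alpha))$---is slightly more direct than the paper's, which instead appeals to Section~\ref{funcsec} to reduce to basis elements $a=\alpha^j$ and then notes that $s_i(\alpha^j)$ is a universal integer polynomial in the $s_k(\alpha)$; but these are two phrasings of the same universal identity over $\Z[x_1,\dots,x_n]$.
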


Now, if $B$ is any ring, then we may examine the ring $A=B^n$
having rank $n$ over $B$.  More generally, we may consider those
locally free rings $A$ of rank $n$ that are {\it \'etale} over $B$,
i.e., those $A$ for which the determinant of the bilinear form $\langle
a,a'\rangle=\Tr(aa')$---called the {\it discriminant} $\Disc(A/B)$ of 
$A$
over $B$---is a unit in $B$ (equivalently, 
%a locally free ring $A$ of rank $n$ over $B$ is \'etale over 
%$B$ if
those $A$ for which the map $\Phi:A\to\Hom_B(A,B)$ given by $a\mapsto
(a'\mapsto\Tr(aa'))$ is a $B$-module isomorphism). 
%(equivalently,
%A ring $A$ over $B$ is called {\it locally free of rank $n$} if there exist $b_1,\dots,b_m\in B$ such that $\sum b_i=1$ 
%and $A\otimes_B B_{b_i}$ is free over the localization $B_{b_i}$.  If $A$ is only locally free of rank $n$ over $B$, then 
%$A$ is {\it \'etale over} $B$ if each $A\otimes_B B_{b_i}$ is \'etale over $B_{b_i}$, where the $b_i$ are as above.  
We prove:

% (the simplest 
%instance being $A=B^n$).  

\begin{theorem}\label{etalecase}
  For any ring $B$, we have $G(B^n/B)\cong B^{n!}$.  If
  $A$ is \'etale and locally free of rank $n$ over $B$, then
  $G(A/B)$ is \'etale and locally free of rank $n!$ over $B$.
\end{theorem}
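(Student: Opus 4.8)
The plan is to reduce everything to an explicit computation in the split case $A=B^{n}$, and then transport the conclusion to an arbitrary \'etale $A$ by faithfully flat descent along a cover that trivialises $A$.

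\textbf{Step 1: the split case $A=B^{n}$.} Write $A=B^{n}$ with its standard orthogonal idempotents $e_{1},\dots,e_{n}$. Then $A^{\otimes n}$ has a $B$-basis of orthogonal idempotents $u_{\phi}=e_{\phi(1)}\otimes\cdots\otimes e_{\phi(n)}$, indexed by maps $\phi\colon\{1,\dots,n\}\to\{1,\dots,n\}$, so $A^{\otimes n}\cong\prod_{\phi}Bu_{\phi}$. For $a=(a_{1},\dots,a_{n})\in A$ the operator $\times a$ is diagonal, so $s_{j}(a)=e_{j}(a_{1},\dots,a_{n})$, where $e_{j}$ denotes the $j$-th elementary symmetric polynomial; and $a^{(k)}$ acts on the factor $Bu_{\phi}$ as multiplication by $a_{\phi(k)}$. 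Hence the generator of $I(A,B)$ attached to a pair $(a,j)$ has $u_{\phi}$-component $e_{j}(a_{1},\dots,a_{n})-e_{j}(a_{\phi(1)},\dots,a_{\phi(n)})$. If $\phi\in S_{n}$ this is $0$ for all $a$ and $j$, so every element of $I(A,B)$ has vanishing $u_{\sigma}$-component for each $\sigma\in S_{n}$; if $\phi\notin S_{n}$, choose an index $k$ outside the image of $\phi$, take $a=e_{k}$ and $j=1$, and multiply the corresponding generator by $u_{\phi}$ to see that $u_{\phi}\in I(A,B)$. Therefore $I(A,B)=\bigoplus_{\phi\notin S_{n}}Bu_{\phi}$, and $G(B^{n}/B)=\bigoplus_{\sigma\in S_{n}}Bu_{\sigma}\cong B^{n!}$.

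\textbf{Step 2: trivialising an \'etale algebra.} For general $A$ that is \'etale and locally free of rank $n$, I would use the standard fact that such an $A$ becomes split after a faithfully flat base change: there is a faithfully flat $B$-algebra $B'$ with $A\otimes_{B}B'\cong(B')^{n}$. (One can iterate the decomposition $A\otimes_{B}A\cong A\times A'$, in which $A'$ is \'etale of rank $n-1$ over $A$, coming from the fact that the diagonal of a finite \'etale morphism is open and closed; or simply invoke the triviality of finite \'etale covers in the \'etale topology.) Since $\otimes_{B}B'$ is flat, functoriality of the $S_{n}$-closure (Theorem~\ref{thm:functorial}) gives $I(A,B)\otimes_{B}B'=I(A\otimes_{B}B',B')=I((B')^{n},B')$, which by Step 1 is the ideal of $(A\otimes_{B}B')^{\otimes n}$ generated by the idempotent $e'=1-\sum_{\sigma\in S_{n}}u_{\sigma}$.

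\textbf{Step 3: descent.} The idempotent generating a given idempotent ideal of a commutative ring is unique, so $e'$ is intrinsically attached to the pair $(A\otimes_{B}B',B')$; consequently its two images in $(A\otimes_{B}(B'\otimes_{B}B'))^{\otimes n}$ coincide, and by faithfully flat descent of idempotents $e'$ descends to an idempotent $e\in A^{\otimes n}$. As $A^{\otimes n}$ is $B$-flat and $B\to B'$ is faithfully flat, the equality $(e)\otimes_{B}B'=I(A,B)\otimes_{B}B'$ forces $(e)=I(A,B)$, so $G(A/B)=A^{\otimes n}/(e)$ is a ring direct factor of $A^{\otimes n}$. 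Now $A^{\otimes n}$ is \'etale and locally free of rank $n^{n}$ over $B$ (tensor powers of locally free modules are locally free, and $\Disc(A^{\otimes n}/B)$ is a power of $\Disc(A/B)$, hence a unit), and a ring direct factor of an \'etale, locally free $B$-algebra is again \'etale and locally free: the trace form splits orthogonally along the factorisation, so nondegeneracy of $\Tr$ on $A^{\otimes n}$ forces it on each factor, and a module direct summand of a locally free module is finitely generated projective. Its rank over $B$ is locally constant, hence can be computed after the faithfully flat base change $B\to B'$, where $G(A/B)\otimes_{B}B'\cong G((B')^{n}/B')\cong(B')^{n!}$; so it equals $n!$ everywhere. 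This gives the general assertion, and Step 1 already gives $G(B^{n}/B)\cong B^{n!}$.

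The only genuinely external ingredient is Step 2, the triviality of \'etale algebras after a faithfully flat base change; I expect this, together with verifying that the cocycle condition in Step 3 is automatic (which it is, precisely because $e'$ is \emph{characterised} by the ideal it generates rather than \emph{chosen}), to be the main obstacle, while the identification of $I(B^{n},B)$ in Step 1 is the elementary computational core. Alternatively, Step 3 can be bypassed by observing directly that ``\'etale'' and ``locally free of rank $n!$'' are local conditions on $B$ in the faithfully flat topology (the latter via descent of finite presentation).
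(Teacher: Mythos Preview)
Your proof is correct. Step~1 matches the paper's argument in Section~\ref{basisforbn} almost exactly: both identify which idempotents $u_\phi$ survive in $G(B^n/B)$ by showing the generators of $I(B^n,B)$ have vanishing $u_\sigma$-component for $\sigma\in S_n$, and that $u_\phi\in I(B^n,B)$ whenever $\phi$ misses some index.

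For the general \'etale case, the paper's route is shorter than your main argument. The paper packages Step~2 and Step~3 into a single lemma (Lemma~\ref{split}): a finitely generated $B$-algebra is \'etale and locally free of rank $m$ if and only if it becomes isomorphic to $C^m$ after base change to some \emph{\'etale} cover $C$ of $B$. Applying this forward to $A$ gives an \'etale cover $C$ with $A\otimes_B C\cong C^n$; functoriality then gives $G(A/B)\otimes_B C\cong C^{n!}$; applying the lemma backward finishes. Your descent-of-idempotents argument in Step~3 is valid but heavier than needed: once you know the trivialising cover can be taken \'etale (not merely faithfully flat), the ``if'' direction of the lemma does all the work, and you never need to descend $e'$ explicitly or argue that $G(A/B)$ is a ring direct factor of $A^{\otimes n}$. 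Your final parenthetical alternative---that ``\'etale and locally free of rank $n!$'' is fpqc-local---is essentially the paper's method, so you already identified the cleaner path.
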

In fact, if $B$ has no nontrivial idempotents, we may explicitly describe the Galois set 
associated to $G(A/B)$ in terms of that associated to $A$ (see Section \ref{etalesec}).

Thus for either \'etale or locally monogenic 
ring extensions of rank $n$, the $S_n$-closure
construction always yields locally free ring extensions 
of rank $n!$.  For general rings that are locally free of small
rank over a base $B$---even those that might not be \'etale or
(locally)
monogenic---the $S_n$-closure still always yields locally
free rings of rank $n!$ over $B$:

\begin{theorem}\label{cubicase}
Suppose $A$ is locally free of rank $n\leq 3$ over $B$.  Then $G(A/B)$
is locally free of rank~$n!$ over~$B$.  
%More generally, if $A$ is locally free of rank $n\leq 3$ over $B$,
%then $G(A/B)$ is locally 

%free of rank $n!$ over $B$.
\end{theorem}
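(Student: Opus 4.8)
The plan is to reduce everything to a single computation over a reduced Noetherian base and then verify it by passing to fibres. The cases $n=1$ (where $G(A/B)=A=B$) and $n=2$ are easy: since ``locally free of rank $n!$'' is a Zariski-local condition on $B$, I may replace $B$ by members of an open cover to assume $A$ is free of rank $2$; because $1_A$ is non-zero in every residue fibre $A\otimes_B\kappa(\mathfrak p)$ it is unimodular, so $B\cdot 1$ is a direct summand of $A$ and $A/B\cdot 1$ is an invertible $B$-module, which I may trivialize after further shrinking; then $A=B\cdot 1\oplus B\cdot\omega$ is monogenic and Theorem~\ref{monocase} applies. So assume $n=3$. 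Shrinking $B$ again, I assume $A$ is free of rank $3$; choosing a basis $\{1,\omega,\theta\}$ and replacing $\omega,\theta$ by $\omega-s,\theta-t$ for suitable $s,t\in B$ I may assume $\omega\theta\in B$, and then associativity of multiplication forces the Delone--Faddeev table $\omega\theta=-ad$, $\omega^2=-ac+b\omega-a\theta$, $\theta^2=-bd+d\omega-c\theta$ for unique $a,b,c,d\in B$. Thus $A$ is the base change of the ``universal cubic ring'' $A_U$ along the map $U\to B$ sending $(a,b,c,d)$ to the structure constants, where $U=\Z[a,b,c,d]$, so by Theorem~\ref{thm:functorial} it suffices to prove that $G(A_U/U)$ is locally free of rank $6$ over $U$.

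For this I would first record that $G(A/B)$ is always finitely presented: $A^{\otimes n}$ is finite free, and $I(A,B)$ is finitely generated because, writing the generic element $a=\sum_i t_ib_i$ over $B[t_1,\dots,t_n]$ and expanding $s_j(a)-e_j(a^{(1)},\dots,a^{(n)})=\sum_{|\alpha|=j}t^{\alpha}\iota_{\alpha}$ with $\iota_\alpha\in A^{\otimes n}$, an application of Theorem~\ref{thm:functorial} to the flat extension $B\to B[t]$ shows each coefficient $\iota_\alpha$ already lies in $I(A,B)$; these finitely many $\iota_\alpha$ (over $1\le j\le n$) then generate the ideal. Now $U=\Z[a,b,c,d]$ is a reduced Noetherian ring, and a finitely presented module over such a ring is locally free of rank $r$ as soon as every fibre $M\otimes_U\kappa(\mathfrak p)$ has dimension $r$ (the fibre-dimension function is upper semicontinuous and here constant, hence locally constant, and over a \emph{reduced} base this forces local freeness). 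By Theorem~\ref{thm:functorial} again, $G(A_U/U)\otimes_U\kappa(\mathfrak p)\cong G(A_0/K)$ where $K=\kappa(\mathfrak p)$ and $A_0$ is the rank-$3$ $K$-algebra with the specialized structure constants. So the whole theorem comes down to showing $\dim_K G(A_0/K)=6$ for every rank-$3$ algebra $A_0$ over every field $K$.

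Decomposing $A_0$ as a product of local Artinian $K$-algebras, a short case analysis shows $A_0$ is either \'etale, or monogenic, or isomorphic to $K[x,y]/(x,y)^2=K\oplus K\epsilon_1\oplus K\epsilon_2$ with all $\epsilon_i\epsilon_j=0$ (the only length-$3$ local $K$-algebra whose maximal ideal is $2$-dimensional and square-zero). In the first case $\dim_KG(A_0/K)=6$ by Theorem~\ref{etalecase}, in the second by Theorem~\ref{monocase}. For $A_0=K[\epsilon_1,\epsilon_2]$ one computes $P_a(x)=(x-u)^3$ for $a=u+v\epsilon_1+w\epsilon_2$, so $s_1(a)=3u$, $s_2(a)=3u^2$, $s_3(a)=u^3$, and $I(A_0,K)$ is generated by the coefficients, as polynomials in $u,v,w$, of the three expressions $s_j(a)-e_j(a^{(1)},a^{(2)},a^{(3)})$. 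The $s_1$-relations give $\epsilon_k^{(3)}=-\epsilon_k^{(1)}-\epsilon_k^{(2)}$; substituting these and using $\epsilon_k^{(i)}\epsilon_k^{(i)}=0$, the $s_2$-relations collapse to $\epsilon_k^{(1)}\epsilon_k^{(2)}=0$ ($k=1,2$) and $\epsilon_1^{(1)}\epsilon_2^{(2)}+\epsilon_2^{(1)}\epsilon_1^{(2)}=0$, and then the $s_3$-relations reduce to identities. What remains is the free $K$-span of $1,\ \epsilon_1^{(1)},\ \epsilon_2^{(1)},\ \epsilon_1^{(2)},\ \epsilon_2^{(2)},\ \epsilon_1^{(1)}\epsilon_2^{(2)}$, of dimension $6$.

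The place where care is genuinely required is the reduction to a reduced base: over a non-reduced ring the implication ``all fibres have dimension $6$'' $\Rightarrow$ ``locally free'' is false (already $\Z/2$ over $\Z/4$ is a counterexample), so one cannot argue directly with the given $B$ and must factor through the universal cubic ring over the polynomial ring $U=\Z[a,b,c,d]$, where the reduced-base criterion does apply. The remaining effort — the bookkeeping confirming that over $K[\epsilon_1,\epsilon_2]$ the relations coming from $s_2$ and $s_3$, after the substitution dictated by the $s_1$-relations, reduce to exactly the three relations listed above, together with the verification of the classification of cubic $K$-algebras — is routine. An alternative to the fibrewise argument is to exhibit a $6$-element $U$-basis of $G(A_U/U)$ directly from the Delone--Faddeev multiplication table (equivalently, from the $2\times 2\times 2$ ``cube'' attached to $A$), at the price of a more involved symbolic computation in the parameters $a,b,c,d$.
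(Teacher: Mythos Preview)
Your proof is correct, and it reaches the same universal base $U=\Z[a,b,c,d]$ via the Delone--Faddeev normalization, but from there you take a genuinely different route than the paper. The paper derives, directly from the characteristic polynomials computed from the Delone--Faddeev table, that the six elements $1,x_1,y_1,x_2,y_2,x_1y_2$ span $G(A/B)$ over \emph{any} base $B$ (equations (\ref{x3})--(\ref{y1x2}) in Section~\ref{cubisec}), and then checks their linear independence only at the \emph{generic} point of $\Spec U$, where $A_U\otimes\mathrm{Frac}(U)$ is a cubic field and Theorem~\ref{fieldcase} gives dimension~$6$. You instead invoke the criterion ``constant fibre dimension over a reduced Noetherian base implies locally free'' and compute \emph{every} fibre, classifying cubic $K$-algebras into the \'etale, monogenic, and $K[x,y]/(x,y)^2$ cases. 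The paper's route is shorter and yields an explicit $U$-basis, which it immediately reuses to prove the discriminant identity $\Disc(G(A/B))=\Disc(A)^3$; your route trades the symbolic manipulation in $a,b,c,d$ for a case analysis together with the direct computation for the degenerate algebra $R_3=K[x,y]/(x,y)^2$ (which is exactly the $n=3$ instance of Section~\ref{deg}). Your closing paragraph correctly identifies the paper's approach as the alternative you chose not to take.
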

For example, if one takes an order $A$ in a noncyclic cubic field $K$,
then its $S_3$-closure yields a canonically associated order $\tilde A=G(A/\Z)$
in the sextic field $\widetilde{K}$.  We will prove in Section~7 that this 
sextic order  
satisfies $\Disc(\tilde A/\Z)=\Disc(A/\Z)^3$.
%This canonical sextic order $R$ has a number
%of interesting properties, which we study in Section ?.
%For example, we show that we always have
%the identity $\Disc(R)=\Disc(A)^3$.
%; this order $R$ turns out to have discriminant 
%$\Disc(A)^3$.

%Extending Theorem~\ref{etalecase}, w
We may ask how the notion of $S_n$-closure behaves 
under general products.  We prove:

\begin{theorem}\label{directsum}
If $A_1,\ldots,A_k$ are locally free rings of rank $n_1,\ldots,n_k$, respectively, over $B$, 
then 
\begin{equation}
G(A_1\times\cdots\times A_k/B)\; \cong \;\bigl[(G(A_1/B)\otimes \cdots\otimes G(A_k/B)\bigr]^{\textstyle{n\choose{n_1,\ldots,n_k}}}.
\end{equation}
\end{theorem}
Theorem 6 implies that if $A_1,\ldots,A_k$ are locally free rings of rank $n_1,\ldots,n_k$ over $B$ such that each $A_j$ has $S_{n_j}$-closure over $B$ that is locally free of the expected 
%factorial 
rank $n_j!$, then the product $A=A_1\times\cdots\times A_k$ (which is locally free of rank $n=n_1+\cdots+n_k$ over $B$) also has $S_n$-closure that is locally free of the expected rank 
%$n_1!\,n_2!\cdots n_k!\cdot {n\choose{n_1,\ldots,n_k}}=
$n!$ over $B$.

%$G(A_j/B)$ is locally free of rank $n_j!$ over $B$ for all $j\in\{1,\ldots,k\}$, then $G(A_1\times\cdots\times A_k/B)$ is locally free of rank $n!$ over $B$, where $n=n_1+\ldots+n_k$ is the locally free rank of $A=A_1\times\cdots\times A_k$ over $B$.  Hence 

One might imagine that for more complicated ring extensions, however, the
analogues of the rank assertions in Theorems~\ref{monocase}--\ref{cubicase} might not
hold.  Indeed, one finds in rank 4 that there exist algebras over fields
for which the $S_4$-closure need not have rank $4!=24$.  For instance,
we will show in Section \ref{degexample} that 
the $S_4$-closure of the ring $K[x,y,z]/(x,y,z)^2$ has
dimension 32 over $K$ for any field~$K$.  

This has consequences over $\Z$ 
as well.  For example, suppose $K$ is
a quartic field and $A$ is the ring of integers in $K$.  Consider the
suborder $A'=\Z+pA$ for some prime $p$.  Since $A'/pA'\cong\F_p[x,y,z]/(x,y,z)^2$, we see
already that the minimal number of generators for $G(A'/\Z)$ 
as an abelian group is at least 32 by Theorem~1.  Since
$A'\otimes\Q=K$, we see that the torsion-free rank of $A'$ is $4!=24$,
but one finds that there are also eight dimensions of $p$-torsion!
Although this may seem unsightly at first, for a number of reasons
this additional information contained in the $p$-torsion is important to
retain in studying the ``Galois closure'' of the order $A'$ (the most
prominent reason being perhaps the property of commuting with base change.)
%that it makes the Galois closure 
%construction commute with base change).  
We study this example more carefully in Section~\ref{orderexample}.
The example will illustrate that there is no natural further quotient of $G(A'/\Z)$ that has 
24 generators as a $\Z$-module and also respects base change (see Theorem~\ref{newth}). This gives
further evidence that allowing the rank to be higher than $n!$ when
constructing $S_n$-closures can be important when considering somewhat
more ``degenerate'' ring extensions.
%%%%%%%%%%%%%

\begin{remark}{\em It is possible to obtain a natural Galois
    closure-type object of rank $n!$ for any order $A$ in a degree $n$
    number field $K$, by constructing $G(A/\Z)$ as defined above, and
    then quotienting by all torsion.  This quotient was used for
    convenience in, e.g., \cite{Bhargava3} and \cite{Bhargava4}.
    Although quite convenient in many contexts, such a quotienting
    procedure will NOT commute with base change! }
\end{remark}

It is an interesting question as to what the possible dimensions are
for the $S_n$-closure of a dimension $n$ algebra over a field $K$.
In Section~11, we 
show that the largest possible dimensions occur for the ``maximally
degenerate'' rank $n$ algebra over $K$, namely
$R_n=K[x_1,\ldots,x_{n-1}]/(x_1,\ldots,x_{n-1})^2$:
% as considered in
%Theorem~\ref{degtheorem}.  

\begin{theorem}\label{maxrank}
Let $K$ be a field and 
$R_n=K[x_1,\dots,x_{n-1}]/(x_1,\dots,x_{n-1})^2$.  
Then for all $K$-algebras $A$ of 
dimension $n$, we have $\dim_KG({A}/K)\leq\dim_KG(R_n/K)$.
\end{theorem}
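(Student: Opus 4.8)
The plan is to realize $R_n$ as a degeneration of $A$ over the affine line $\Spec K[t]$, and then feed this family into functoriality (Theorem~\ref{thm:functorial}) together with upper semicontinuity of fiber dimension.

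First I would build the family. Fix a $K$-basis $1=e_0,e_1,\dots,e_{n-1}$ of $A$ with $V:=Ke_1\oplus\cdots\oplus Ke_{n-1}$, and record the multiplication on $V$ by the symmetric bilinear maps $\lambda\colon V\times V\to K$ and $\mu\colon V\times V\to V$ with $vw=\lambda(v,w)\cdot 1+\mu(v,w)$. On the free rank-$n$ module $\mathcal A:=K[t]\cdot 1\oplus(V\otimes_K K[t])$ define a $K[t]$-bilinear product by declaring $1$ to be the identity and setting $v*w=t^2\lambda(v,w)\cdot 1+t\,\mu(v,w)$ on basis vectors of $V$, extended $K[t]$-bilinearly; the structure constants are honest polynomials in $t$, so this is well defined. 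Reducing mod $t$ makes the product on $V$ vanish, so $\mathcal A\otimes_{K[t]}K\cong R_n$; inverting $t$, the $K[t,t^{-1}]$-linear map fixing $1$ and scaling $V$ by $t$ is an algebra isomorphism $\mathcal A[t^{-1}]\xrightarrow{\ \sim\ }A\otimes_K K[t,t^{-1}]$. In particular commutativity, associativity and the unit axiom are polynomial identities in the structure constants that hold after inverting $t$, hence hold in $K[t]$; so $\mathcal A$ is a genuine $K[t]$-algebra, free of rank $n$.

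Next I would run functoriality on this family. Set $M:=G(\mathcal A/K[t])$, a finitely generated $K[t]$-module (a quotient of the free module $\mathcal A^{\otimes n}$). By Theorem~\ref{thm:functorial}, base change along $K[t]\to K,\ t\mapsto 0$ gives $M\otimes_{K[t]}K\cong G(R_n/K)$, and base change along $K[t]\to K(t)$ gives $M\otimes_{K[t]}K(t)\cong G(\mathcal A\otimes_{K[t]}K(t)/K(t))$. Since $\mathcal A\otimes_{K[t]}K(t)\cong A\otimes_K K[t,t^{-1}]\otimes_{K[t,t^{-1}]}K(t)=A\otimes_K K(t)$, another application of functoriality (base change $K\to K(t)$) identifies the latter with $G(A/K)\otimes_K K(t)$, of $K(t)$-dimension $\dim_K G(A/K)$. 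Now $K[t]$ is a PID, so $M\cong K[t]^r\oplus T$ with $T$ torsion; hence $r=\dim_{K(t)}M\otimes_{K[t]}K(t)=\dim_K G(A/K)$, while $\dim_K G(R_n/K)=\dim_K M\otimes_{K[t]}K=r+\dim_K T/tT\ge r$. This gives $\dim_K G(R_n/K)\ge\dim_K G(A/K)$. (Equivalently: $\mathfrak p\mapsto\dim_{\kappa(\mathfrak p)}M\otimes\kappa(\mathfrak p)$ is upper semicontinuous on $\Spec K[t]$, and $(t)$ is a specialization of the generic point.)

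The main point requiring care is the construction of the family $\mathcal A$: one must check that the rescaled multiplication has structure constants lying in $K[t]$ with no pole at $t=0$—so that the fiber over $t=0$ is literally $R_n$ rather than merely an abstract limit—and that the unit survives the rescaling; both hold precisely because the rescaling only contracts the complement $V$ of $K\cdot 1$. A second point worth attention is to arrange the semicontinuity step so as not to require $K$ infinite: comparing the special fiber at $t=0$ directly against the generic fiber over $K(t)$, rather than against fibers at many $K$-rational points, avoids this entirely.
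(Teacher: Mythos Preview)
Your argument is correct and follows the same underlying strategy as the paper---degenerate $A$ to $R_n$, then invoke functoriality plus upper semicontinuity of fiber dimension---but the execution is different in a way worth noting. The paper works on the full moduli scheme $\mathfrak{B}_{n,K}$ of based rank-$n$ algebras, builds the coherent sheaf $\mathcal F_n$ of $S_n$-closures there, and then cites \cite[Prop.~7.1]{moduli} for the fact that the point corresponding to $R_n$ lies in the closure of every $\GL_n$-orbit. You instead produce the degeneration by hand: the $\mathbb G_m$-rescaling $1\mapsto 1$, $e_i\mapsto te_i$ yields an explicit family over $\mathbb A^1_K$ with generic fiber $A$ and special fiber $R_n$, and you run semicontinuity directly via the structure theorem for finitely generated modules over the PID $K[t]$. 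Your version is more self-contained (no moduli machinery, no external citation) and in fact reproves the relevant special case of Poonen's proposition inline; the paper's version is shorter to state once that machinery is in place and makes the conceptual picture (a single sheaf over the whole moduli space) more visible. Your care in comparing against the generic point over $K(t)$ rather than against many $K$-points is also a genuine improvement in that it removes any implicit hypothesis that $K$ be infinite.
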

%Thus the
%dimension of the $S_n$-closure of a ring $R$ of rank $n$ over $K$ can
%be viewed as an interesting 
%measure of the ``degeneracy'' of $R$ as a ring extension
%of $K$, with $R_n$ giving the maximal possible dimension.

%What is this maximal possible dimension achieved by $R_n$?
In addition to their interest due to Theorem~\ref{maxrank}, 
the algebras $R_n$ are of interest in their own
right as they arise (with $K=\F_p$) as the reductions modulo $p$ of
orders $R$ in number fields that are {\it imprimitive} at $p$, i.e.,
$R=\Z+pR'$ for some order $R'$.  
For these reasons, we study the $S_n$-closures of these algebras in
more detail in Section~\ref{deg}, and show:

\begin{theorem}\label{degtheorem}
Let $K$ be a field of characteristic $0$ or coprime to $n!$, and let
\linebreak 
$R_n = K[x_1,\ldots,x_{n-1}]/(x_1,\dots,x_{n-1})^2$.  Then the dimension of
$G(R_n/K)$ over $K$ is strictly greater than $n!$ for $n>3$.
\end{theorem}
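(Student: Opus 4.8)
I would prove this without computing $\dim_K G(R_n/K)$ directly. The plan is to produce a single $n$-dimensional $K$-algebra $A$ with $\dim_K G(A/K)>n!$ and then invoke Theorem~\ref{maxrank}, which says that $R_n$ maximizes $\dim_K G(\,\cdot\,/K)$ over all $n$-dimensional $K$-algebras; this at once yields $\dim_K G(R_n/K)\ge\dim_K G(A/K)>n!$. The algebra I would use is $A=R_4\times K^{n-4}$: it is free of rank $4+(n-4)=n$ over $K$, and it carries the ``degenerate'' factor $R_4$, whose $S_4$-closure is already known (Section~\ref{degexample}) to have the anomalously large dimension $32>24=4!$.

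The main step is a product formula for $S_n$-closures, which I would establish first: if $A=A_1\times A_2$ with $A_i$ free of rank $n_i$ over $K$ and $n=n_1+n_2$, then
\[
G(A/K)\;\cong\;\bigl(G(A_1/K)\otimes_K G(A_2/K)\bigr)^{\oplus\binom{n}{n_1}}
\]
as $K$-algebras; in particular $\dim_K G(A_1\times\dots\times A_l/K)=\tfrac{n!}{n_1!\cdots n_l!}\prod_i\dim_K G(A_i/K)$ by iteration. To prove it, decompose $A^{\otimes n}=\bigoplus_{S\subseteq\{1,\dots,n\}}A_1^{\otimes S}\otimes_K A_2^{\otimes S^c}$ via the orthogonal idempotents $\pi_S$ obtained by selecting, in each tensor slot, the $A_1$- or the $A_2$-idempotent of $A=A_1\times A_2$. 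Since the characteristic polynomial of multiplication by $(a_1,a_2)$ on $A$ is the product of the characteristic polynomials on $A_1$ and $A_2$, the image in the $S$-component of the generator $s_j(a)-e_j(a^{(1)},\dots,a^{(n)})$ of $I(A,K)$ is the coefficient of $T^{n-j}$ in $\prod_{i\in S}(T+a_1^{(i)})\prod_{i\in S^c}(T+a_2^{(i)})-p_{a_1}(T)q_{a_2}(T)$, where $p_{a_1}(T)=\sum_j s_j(a_1)T^{n_1-j}$ is computed in $A_1$ and $q_{a_2}$ in $A_2$. Specializing $a=(a_1,0)$ and $a=(0,a_2)$ separates the two halves of the claim: (i) if $|S|\ne n_1$, taking $a_1=1$ (resp.\ $a_2=1$) makes one coefficient of the displayed polynomial equal to $\pm\pi_S$, so $\pi_S$ vanishes in $G(A/K)$; and (ii) if $|S|=n_1$, the restriction of $I(A,K)$ to the $S$-component is exactly $I(A_1,K)\otimes_K A_2^{\otimes S^c}+A_1^{\otimes S}\otimes_K I(A_2,K)$, whence $\pi_S G(A/K)\cong G(A_1/K)\otimes_K G(A_2/K)$. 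Summing over the $\binom{n}{n_1}$ subsets of size $n_1$ gives the formula.

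Granting the formula, I would then feed in $\dim_K G(K^{n-4}/K)=(n-4)!$ (Theorem~\ref{etalecase}) together with the input $\dim_K G(R_4/K)=32$:
\[
\dim_K G\bigl((R_4\times K^{n-4})/K\bigr)=\binom{n}{4}\cdot 32\cdot(n-4)!=\frac{32}{24}\,n!=\frac{4}{3}\,n!>n!,
\]
valid for all $n\ge4$ (for $n=4$ this is simply $\dim_K G(R_4/K)=32>24$). Theorem~\ref{maxrank} then gives $\dim_K G(R_n/K)\ge\tfrac{4}{3} n!>n!$ for all $n>3$, as required; and since $\dim_K G(R_n/K)=n!$ for $n\le3$ by Theorem~\ref{cubicase}, the threshold $n>3$ is sharp.

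The only genuine obstacle is the product formula — specifically part~(i), that every component with $|S|\ne n_1$ collapses, and part~(ii), the identification of the restricted ideal — which needs a careful but routine tracking of how the generators of $I(A,K)$ meet the idempotent decomposition; everything afterward is bookkeeping. I note that this route does not actually use the hypothesis on $\mathrm{char}\,K$; that hypothesis is what one needs for the alternative, hands-on approach, in which the identity $s_j(\lambda+v)=\binom{n}{j}\lambda^j$ for $v\in(x_1,\dots,x_{n-1})$ rewrites the relations defining $G(R_n/K)$ as $e_k(v^{(1)},\dots,v^{(n)})=0$, one polarizes these (legitimate precisely because $k!$ is invertible for $k\le n$), and then counts dimensions grade by grade.
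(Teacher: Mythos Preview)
Your argument is correct and follows a genuinely different route from the paper's. The paper deduces Theorem~\ref{degtheorem} from a full $S_n$-module decomposition of $G(R_n/K)$ (Theorem~\ref{thm:main}, via Theorem~\ref{cor:reg}), proved using Young's rule, Kostka numbers, and the hook length formula; the hypothesis on $\mathrm{char}\,K$ is needed there so that the Specht modules $V_\mu$ are irreducible. You instead bootstrap from the single explicit computation $\dim_K G(R_4/K)=32$ of Section~\ref{degexample} by establishing a product formula $G((A_1\times A_2)/K)\cong\bigl(G(A_1/K)\otimes_K G(A_2/K)\bigr)^{\binom{n}{n_1}}$ and combining it with Theorem~\ref{maxrank}. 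Your product formula is not in the paper, but the sketch goes through: the idempotent decomposition of $A^{\otimes n}$ is a ring product, so $I(A,K)$ splits accordingly; for part~(i), specializing $a=(1,0)$ and taking $j=\max(n_1,|S|)$ forces $\pi_S\in I(A,K)$ in any characteristic; for part~(ii), Section~\ref{funcsec} lets you generate $I(A,K)$ from the basis $\{(\alpha_i,0)\}\cup\{(0,\beta_j)\}$, whose projections to the $S$-component with $|S|=n_1$ are exactly the generators of $I(A_1,K)$ and $I(A_2,K)$. The upshot is that your route is more elementary, avoids representation theory entirely, and---as you correctly observe---works in arbitrary characteristic, so it actually proves slightly more than the stated theorem. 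What you give up is precision: the paper's method yields the exact $S_n$-module structure and hence the exact dimension of $G(R_n/K)$, whereas yours produces only the lower bound $\dim_K G(R_n/K)\ge\tfrac{4}{3}\,n!$.
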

In particular, we find for $n=1$, 2, 3, 4, 5, and 6 that
$\dim_K\,G(R_n/K)=1$, $2$, $6$, $32$, $220$, and $1857$ 
respectively.  These ranks thus give the maximal possible ranks for the
$S_n$-closures of rank $n$ rings over $K$ for these values of $n$.  
Theorem~\ref{degtheorem}
will in fact follow from a more general structure theorem for these
rings $G(R_n/K)$ (see Theorem~\ref{thm:main}).  The techniques used to prove
Theorem~\ref{degtheorem} are primarily those of representation theory
of $S_n$.  
%Due to theorems 7 and 8, the
%dimension of the $S_n$-closure of a ring $R$ of rank $n$ over $K$ can
%be viewed as an interesting 
%measure of the ``degeneracy'' of $R$ as a ring extension
%of $K$, with $R_n$ giving the maximal possible dimension/degeneracy.

%, one finds in rank 4 that there exist algebras over fields
%for which the $S_4$-closure need not have rank 4!=24.  For instance,
%we will compute the $S_4$-closure of the ring $K[x,y,z]/(x,y,z)^2$ to
%have dimension 32, for any field $K$.  

%Theorem~\ref{degtheorem} 

As we now describe, our notion of Galois closure can also easily be adapted to
the more general situation of 
%an algebra $A$ over a ring $B$ that is
%locally free over $B$ of rank $n$.  In that case, we define 
%$G(A/B)$ by ...  More generally, we may consider a covering $\mathcal
%A/\mathcal B$ of schemes that is locally free of rank $n$
a morphism $X\to Y$ of schemes, where $\mathcal{A}$ is a locally free sheaf of $\OO_Y$-algebras of rank~$n$ and 
$X=\underline{\textrm{Spec}}_Y\mathcal{A}$.  We say then that
$X/Y$ is an {\it $n$-covering}.  

Recall that if $\mathcal{E}$ is a locally free sheaf of rank $n$ on a scheme $Y$ and $f$ is a local section of 
$\mathcal{E}nd(\mathcal{E})$, then the trace of $f$ is the image of $f$ under the canonical morphism
\[
\mathcal{E}nd(\mathcal{E})\cong \mathcal{E}\otimes_{\OO_Y}\mathcal{E}^\vee\to\OO_Y.
\]
If $X/Y$ is an $n$-covering and $\mathcal{A}$ is as above, then for
any $a\in\mathcal{A}(U)$ we can define the coefficients $s_j(a)$ of
the ``characteristic polynomial'' $P_a$ of $a$ as follows.  We obtain
an $\OO_U$-module endomorphism of $\mathcal{A}|_U$ given by
multiplication by $a$.  We let $s_j(a)$ be the trace of the induced
endomorphism of $\bigwedge^j\mathcal{A}|_U$.  We can then define a
sheaf of ideals $\mathcal{I}(\mathcal{A},\OO_Y)$ of
$\mathcal{A}^{\otimes n}$ generated by the local expressions as in
(\ref{fundrels}) and let
\[
G(\mathcal{A}/\OO_Y)=\mathcal{A}^{\otimes n}/\mathcal{I}(\mathcal{A},\OO_Y).
\]
We define 
\[
G(X/Y)=\underline{\textrm{Spec}}_YG(\mathcal{A}/\OO_Y).
\]
%The Galois
%closure $G(\mathcal A/\mathcal B)$ of an $n$-covering $\mathcal
%A/\mathcal B$ is then defined in a similar
%manner, namely ...  
Even in this more general context of $n$-coverings
of schemes, we still have the analogues of Theorems~1, 3, 4, and 5.  More precisely, \\
\\
\begin{theorem1'}
\emph{
If $X/Y$ is an $n$-covering and $Z\to Y$ is a morphism of schemes, then there is a natural isomorphism
\[
G(X/Y)\times_Y Z\cong G(X\times_Y Z / Z).
\]
}
\end{theorem1'}
\begin{theorem3'}
\emph{
If $X/Y$ is an $n$-covering defined by a locally free sheaf $\mathcal{A}$ of $\OO_Y$-algebras which is locally generated 
as an $\OO_Y$-algebra by one element, then $G(X/Y)$ is an $n!$-covering of $Y$.
}\\
\end{theorem3'}
\begin{theorem4'}
\emph{
If $X/Y$ is an $n$-covering which is \'etale, then $G(X/Y)$ is an $n!$-covering of $Y$ which is \'etale.
}\\
\end{theorem4'}
\begin{theorem5'}
\emph{
If $X/Y$ is an $n$-covering defined by a locally free sheaf $\mathcal{A}$ of $\OO_Y$-algebras and $n\leq3$, then $G(X/Y)$ 
is an $n!$-covering of $Y$.
}\\
\end{theorem5'}
%We show in Section \ref{sec:prime} how 
Theorems $1'$, $3'$, $4'$, and $5'$ follow directly from Theorems~1,
3, 4, and 5, due to the local nature of our definitions.  Hence we
will concentrate primarily 
on the proofs of Theorems 1--9, in cases
of locally free ring extensions of rank $n$.
% where $A$
%is a (free) ring of rank $n$ over $B$.

We note that the notion of $S_n$-closure considered here arises at least incidentally or in special cases in other works.  For example, it occurs in the monogenic case in~Grothendieck \cite[Lem.~1]{semchevalley} and in~Katz--Mazur \cite[\S1.8.2]{KM}.  The construction for general rings is also mentioned in \cite[\S 5.2]{Ferrand} (comment of O.\ Gabber), although no properties are proven there. 
 
We end the introduction by noting that the $S_n$-closure construction can also be characterized by a universal property in terms of a key notion of Katz and Mazur \cite[1.8.2]{KM}: if $B$ is a ring and $A$ is a $B$-algebra which is locally free of rank $n$, then $B$-algebra maps $p_1,\dots,p_n:A\to B$ form {\it a full set of sections} if for every $B$-algebra $C$ and every $f\in A\otimes_B C$,
\[
P_f(x)=\prod_{i=1}^n(x-(p_i\otimes\id)(f)).
\]
Then Theorem~\ref{thm:functorial} implies:

\begin{theorem}\label{universal}
Let $B$ be any ring and $A$ any $B$-algebra that is locally free of rank $n$.
Then $G(A/B)$ is the universal $B$-algebra over which $A$ admits a full set of $n$ sections.
\end{theorem}
Indeed, Theorem~\ref{thm:functorial} shows that the $G(A/B)$-algebra maps $p_i:A\otimes_B G(A/B) \to G(A/B)$ defined by $p_i(a\otimes \gamma)=a_i\gamma$ form a full set of sections, where $a_i$ denotes the image of $a^{(i)}$ in $G(A/B)$.  It is then immediate from the relations (\ref{fundrels}) defining $I(A,B)$ that this family is universal.

%Finally, say something about the case $k=4$ and how there is no
%subrepresentation of dimension 24.  Give an example of a nonprimitive
%order of rank 4.

%The organization of this paper is as follows.  In Section
%\ref{funcsec}, we prove Theorem \ref{thm:functorial}.  In Sections
%\ref{bnsec} and \ref{etalesec} we prove the first and second
%assertions of Theorem \ref{etalecase}, respectively.  We then handle
%Theorem \ref{fieldcase} in Section \ref{fieldsec}, Theorem
%\ref{monocase} in Section \ref{monosec}, and Theorem \ref{cubicase} in
%Section \ref{cubisec}.  In Section \ref{degexample}, we give an
%explicit computation of the $S_4$-closure of the ``maximally
%degenerate ring'' of rank 4 and show that its rank is $32>4!$.  In
%Section \ref{orderexample} we show that there is no natural (i.e.
%functorial) quotient of the $S_n$-closure whose rank is exactly $n!$.
%In Section \ref{sec:maxrank}, we prove Theorem \ref{maxrank}.  In
%Section \ref{deg} we give a structure theorem (Theorem \ref{thm:main})
%for the $S_n$-closure of the degenerate ring and use this to deduce
%Theorem \ref{degtheorem}.  We conclude by listing some open questions
%in Section \ref{sec:open}.

\section{$S_n$-closure commutes with base change}\label{funcsec}

%In this section, we prove Theorem~1.  
Let $A$ be any {ring of rank $n$ over a base ring $B$}. In this
section, we show that
%\begin{theorem}\label{addbasis}
  the ideal $I(A,B)$ in $A^{\otimes n}$ is generated by the relations
  (\ref{fundrels}), where {\em $a$ ranges over a basis of $A$ as a
    module over $B$}.  
%\end{theorem}
As such a basis 
%of $A$ as a module over $B$ will 
remains a basis of $A\otimes_B C$ as a module over $C$ for
any ring $C$, Theorem~1 will then follow.
%from the resulting equality $I(A,B)\otimes C = I(A\otimes C,B\otimes C)$.

To prove our assertion about $I(A,B)$, we require:

\begin{lemma}\label{bart}
%  Let $X$ and $Y$ be $k \times k$ matrices over a commutative ring
%  $B$.  
Let $\Z\langle X,Y\rangle$ denote the noncommutative polynomial ring
over $\Z$
generated by $X$ and~$Y$.  Then there exists a unique sequence 
  $f_0(X,Y)$, $f_1(X,Y)$, $\ldots$ of polynomials 
in $\Z\langle X,Y\rangle$ such that in $\Z\langle X,Y\rangle[[T]]$ we have:
%Let $A$, $B$ denote any two $k\times k$ matrices.  There exist unique
%polynomials $f_1,f_2,\ldots\in \Z[A,B]$ (or should we write $\Z\langle
%A,B\rangle$?) such that in $\Z[A,B][[T]]$ we have:
\begin{equation}\label{exp}
(1-(X+Y)T) = (1-XT)(1-YT)\prod_{k=0}^\infty (1-f_k(X,Y)\,XY\,T^{k+2}).
\end{equation}
Furthermore, $f_m(X,Y)$ is a homogeneous polynomial in $X$ and $Y$ of degree $m$.
\end{lemma}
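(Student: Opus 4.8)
The plan is to construct the polynomials $f_k$ recursively by comparing coefficients of powers of $T$ in (\ref{exp}), and to track homogeneity along the way. Rewrite the desired identity as
\begin{equation}\label{eq:rearrange}
\prod_{k=0}^\infty (1-f_k(X,Y)\,XY\,T^{k+2}) = \frac{1-(X+Y)T}{(1-XT)(1-YT)},
\end{equation}
where the right-hand side is interpreted as the product $(1-(X+Y)T)(1-XT)^{-1}(1-YT)^{-1}$ in $\Z\langle X,Y\rangle[[T]]$ — note that $(1-XT)^{-1}=\sum_{m\ge0}X^mT^m$ makes sense since $XT$ has positive $T$-order, and similarly for $(1-YT)^{-1}$. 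Call the right-hand side $R(T)=\sum_{m\ge 0} R_m T^m$; each $R_m$ is a $\Z$-linear combination of noncommutative monomials in $X,Y$, and a direct expansion shows $R_m$ is homogeneous of degree $m$ and that $R_0=1$, $R_1=0$ (the $T^1$ coefficient is $(X+Y)-(X+Y)=0$).

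Now I would build the $f_k$ one at a time. Suppose inductively that $f_0,\dots,f_{N-1}$ have been chosen (uniquely, and with $f_k$ homogeneous of degree $k$) so that the partial product $P_N(T):=\prod_{k=0}^{N-1}(1-f_k XY\,T^{k+2})$ agrees with $R(T)$ in all coefficients of $T^m$ for $m\le N+1$. The base case $N=0$ holds because $R_0=1$ and $R_1=0$. For the inductive step, look at the coefficient of $T^{N+2}$ in $R(T)P_N(T)^{-1}$: since $R$ and $P_N$ agree up to order $N+1$ and both have constant term $1$, this quotient equals $1 + c_{N+2}T^{N+2} + (\text{higher order})$ for some noncommutative polynomial $c_{N+2}$, and one checks $c_{N+2}$ is homogeneous of degree $N+2$. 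I claim $c_{N+2}$ is divisible on, say, the left by $XY$ in the appropriate sense — more precisely, that $c_{N+2}$ lies in the two-sided ideal generated by $XY$, indeed is expressible as $f_N\cdot XY$ for a \emph{unique} homogeneous $f_N$ of degree $N$. Granting this, set $P_{N+1}(T)=P_N(T)(1-f_N XY\,T^{N+2})$; then $P_{N+1}$ agrees with $R$ through order $N+2$ (the new factor only changes coefficients from $T^{N+2}$ onward, and kills exactly the discrepancy $c_{N+2}$), and uniqueness of $f_N$ is forced since the $T^{N+2}$-coefficient equation determines it. Passing to the limit, the infinite product converges $T$-adically and equals $R(T)$, giving (\ref{eq:rearrange}) and hence (\ref{exp}).

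The main obstacle — and the only substantive point — is the divisibility claim: that the obstruction $c_{N+2}$ always factors as $f_N\cdot XY$ with $f_N$ a (necessarily unique, necessarily degree-$N$) noncommutative polynomial. Uniqueness is immediate because $\Z\langle X,Y\rangle$ is a free associative algebra, so right multiplication by $XY$ is injective; the content is existence, i.e.\ that every monomial appearing in $c_{N+2}$ ends in the suffix $XY$. To see this I would argue structurally rather than by brute force: observe that $R(T)=(1-(X+Y)T)(1-XT)^{-1}(1-YT)^{-1}$, and that modulo the right ideal generated by "monomials not ending in $XY$" one can compute directly. Concretely, work in the quotient module $\Z\langle X,Y\rangle / (\text{span of monomials whose last two letters are not }XY)$; every factor in the product, and hence $R(T)$ and $P_N(T)^{-1}$, can be reduced there, and a short computation (essentially $(1-XT)^{-1}(1-YT)^{-1} \equiv \sum X^aY^b T^{a+b}$ combined with the numerator) shows the reduction of $R(T)$ agrees with that of $P_N(T)$ through \emph{all} orders once $f_0,\dots,f_{N-1}$ are in place — forcing the discrepancy $c_{N+2}$ to vanish in the quotient, which is exactly the statement that it is divisible on the right by $XY$. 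Homogeneity of $f_N$ then follows because $c_{N+2}$ is homogeneous of degree $N+2$ and $XY$ has degree $2$. This completes the induction and the proof.
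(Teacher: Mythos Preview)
Your overall strategy---build the $f_k$ recursively by comparing coefficients of $T^{N+2}$---is exactly the paper's approach. But there is a genuine error in your execution that breaks the argument as written.

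When you isolate the infinite product from (\ref{exp}), you must multiply by the inverse of $(1-XT)(1-YT)$ \emph{on the left}: if $1-(X+Y)T=(1-XT)(1-YT)\,\Pi$, then
\[
\Pi=(1-YT)^{-1}(1-XT)^{-1}\bigl(1-(X+Y)T\bigr),
\]
not $(1-(X+Y)T)(1-XT)^{-1}(1-YT)^{-1}$ as you wrote. In $\Z\langle X,Y\rangle[[T]]$ these two expressions differ. With your $R(T)$ one computes directly that the coefficient of $T^2$ is
\[
R_2=(X^2+XY+Y^2)-(X+Y)(X+Y)+0=-YX,
\]
which is \emph{not} right-divisible by $XY$; so already at the base step $N=0$ your divisibility claim fails and no $f_0$ with $f_0\cdot XY=c_2$ exists. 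With the correct left-inversion one gets $R_2=-XY$ and the argument goes through (there is also a harmless sign slip: one finds $c_{N+2}=-f_N\,XY$). Your ``quotient module'' justification for the divisibility is also too vague as stated; the clean argument is simply that in both the correct $R(T)$ and in $P_N(T)$, every monomial of $T$-degree $\ge2$ visibly ends in the letters $XY$---for $P_N$ because each nontrivial factor contributes a terminal $f_kXY$, and for the correct $R$ because $R_m=-\sum_{a=1}^{m-1}Y^{\,m-1-a}X^{a}Y$.

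The paper avoids this noncommutative pitfall by never inverting: it compares the $T^{m+2}$ coefficients in (\ref{exp}) directly, obtaining $f_m\cdot XY=\bigl[\text{coeff.\ of }T^{m+2}\text{ in }(1-XT)(1-YT)\prod_{k<m}(1-f_kXY\,T^{k+2})\bigr]$, where right-divisibility by $XY$ is immediate from the product's shape. Your homogeneity argument, on the other hand, is fine and in fact more explicit than the paper's.
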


\begin{proof}
%This may be proven (mod $T^i$) by induction on $i$.
We first prove by induction on $m$ that the value of $f_m(X,Y)$ is
completely determined by~(\ref{exp}).  Indeed, 
to see the assertion for $m=0$, we take (\ref{exp}) modulo
$T^3$ to obtain
\[(1-(X+Y)T) \equiv (1-XT)(1-YT)(1-f_0(X,Y)\,XY\,T^{2})\pmod{T^3}\]
implying 
\[1-XT-YT \equiv 1-XT-YT+(1-f_0(X,Y))\,XY\,T^2 \pmod{T^3}\]
and so we must have $f_0(X,Y)=1$.  

Similarly, assuming that $f_0(X,Y),\ldots,f_{m-1}(X,Y)$ have been
determined from (\ref{exp}), the polynomial $f_m(X,Y)$ can also then
be determined from (\ref{exp}) by taking (\ref{exp}) modulo $T^{m+3}$:
\begin{equation}\label{exp2}
(1-(X+Y)T)\equiv 
(1-XT)(1-YT)\prod_{k=0}^m (1-f_k(X,Y)\,XY\,T^{k+2}) \pmod{T^{m+3}};
\end{equation}
equating the coefficients of $T^{m+2}$ in (\ref{exp2}) yields 
\begin{equation}\label{exp3}
f_m(X,Y)\,XY = \Bigl[\mbox{coefficient of $T^{m+2}$ in 
$\displaystyle{
(1-XT)(1-YT)\prod_{k=0}^{m-1} (1-f_k(X,Y)\,XY\,T^{k+2})}$}\Bigr].
\end{equation}
Inspection shows that every term on the right hand side of (\ref{exp3}) is right-divisible by $XY$; dividing on the right by $XY$ on both sides of (\ref{exp3}) now gives the desired expression for $f_m(X,Y)$.

We have shown that the sequence $\{f_m(X,Y)\}$ is uniquely determined from
(\ref{exp}) via the recursive formula in (\ref{exp3}). Moreover, the equation in (\ref{exp}) is true for this latter sequence $\{f_m(X,Y)\}$ of
polynomials because it is true modulo $T^i$ for every $i$.  This
concludes the proof.
\end{proof}

\vspace{-.075in}
\begin{remark}
{\em This beautiful lemma (Lemma~\ref{bart}) 
was pointed out to us by Bart de Smit.  See also \cite{Amitsur}, \cite{RS} 
for related results.}
\end{remark}

\begin{remark}
{\em The first few polynomials $f_k(X,Y)$ are given as follows:}
\begin{equation}
\begin{array}{rcl}
f_0(X,Y)&=&1\\[.025in] 
f_1(X,Y)&=&X+Y\\[.025in]  
f_2(X,Y) &=& X^2 + YX + Y^2\\[.025in] 
f_3(X,Y) &=& X^3 + XYX + XY^2 + YX^2 +  Y^2X + Y^3\\[.025in]  
f_4(X,Y) &=& X^4 + XYX^2 + XY^2X + XY^3 + YX^3 + Y^2X^2 + Y^3X + Y^4.
\end{array}
\end{equation}
\end{remark}

We now return to our assertion about $I(A,B)$.  Given $a\in A$, let 
$Q_a(T)= \det(1-aT)=1-s_1(a)T+s_2(a)T^2 - \cdots$ be the reverse
characteristic polynomial of $a$.
%, where we use $a|_A:=\times a$ to denote the $B$-linear transformation on $A$ given by multiplication by $a$.  
Then given any elements $x,y\in A$, we have by Lemma~\ref{bart} that 
%where $a$ is again viewed as a $B$-linear transformation $\times x:A\to A$.
\begin{equation}\label{bart2}
(1-(x+y)T) = (1-xT)(1-yT)\prod_{n=0}^{m-2}(1-(f_n(x,y)\,xy)\,T^{n+2})
\pmod{T^{m+1}}.
\end{equation}
%here, for $z\in A$, we agin use $z|_A:A\to A$ to denote the $B$-linear 
%transformation on $A$ given by multiplication by $z$. 

Taking determinants of both sides of (\ref{bart2}), 
and equating powers of $T^{m}$,
yields an expression for $s_m(x+y)$ as an integer polynomial in
$s_i(x)$ ($0\leq i\leq m$), $s_i(y)$ ($0\leq
i\leq m$), and $s_i(g_j(x,y))$ ($0\leq i\leq m/2$)
for various integer polynomials $g_j$.  When $A=B^n$, 
%then $x=(x_1,\ldots,x_n)$ and $y=(y_1,\ldots,y_n)$ 
%are $n$-tuples in $B$, and 
the $s_m(z)$ (where $z=(z_1,\ldots,z_n)\in A$) become the $m$-th elementary symmetric polynomials $e_m(z_1,\ldots,z_n)$ in $z_1,\ldots,z_n$; thus our identities involving the $s_m$ turn into polynomial identities in the elementary symmetric polynomials $e_m$ in this case (indeed, since they hold with $x,y\in B^n$ for any ring $B$, they must hold identically as polynomial identities over the integers).

\begin{remark}{\em
For example, we have:}
\begin{equation*}
\begin{array}{rcl}
s_1(x+y)&\!\!=\!\!&s_1(x)+s_1(y)\\[.025in]
s_2(x+y)&\!\!=\!\!&s_2(x)+s_1(x)s_1(y)+s_2(y)-s_1(xy)\\[.025in]
s_3(x+y)&\!\!=\!\!&s_3(x) +
s_2(x)s_1(y) + s_1(x)s_2(y) + s_3(y) + s_1(xxy)+s_1(xyy) - (s_1(x) +
s_1(y)) s_1(xy).
\end{array}
\end{equation*}
\end{remark}

\vspace{.05in} Since for any $b\in B$ and $k\in\N$ we have 
$s_k(bx)=b^ks_k(x)$, it follows by induction on $m$ that the values of all
expressions of the form $s_m(a)$ ($0\leq m\leq n)$ for $a\in A$ are
determined by the values of $s_i$ ($i\leq m$) on a  basis for
$A$ as a $B$-module.  As 
the elementary symmetric polynomials $e_i$ also satisfy these same general relations as the $s_i$, 
we conclude that the ideal $I(A,B)$ in
$A^{\otimes k}$ is generated by the relations (\ref{fundrels}),
where {$a$ ranges over a $B$-basis of $A$}.  In particular, Theorem~1
follows in the case where we are considering only ring
extensions $A$ that are free of rank $n$ over~$B$.

Of course, the above argument can be modified slightly to handle the
case where $A$ is {locally} free of rank $n$ over $B$.
Indeed, in this case $A$ is still a finitely-generated 
$B$-module (see Footnote~2).
%To see this, it
%suffices to look locally; that is, we need only show that there are
%$b_i\in B$ such that $\sum b_i=1$ and $A\otimes_B B_{b_i}$ is a
%finitely-generated $B_{b_i}$-module.  This clearly holds, by the
%definition of locally free of rank $n$.  
The above argument then shows
that $I(A,B)$ is generated by the relations (\ref{fundrels}) where $a$
runs through any set of generators for $A$ as a $B$-module.  The assertion 
of Theorem~1 then follows in this generality as well.
%for rings $A$ that are locally free of rank $n$ over~$B$.

%\begin{lemma}\label{bart}
%  Let $X$ and $Y$ be $k \times k$ matrices over a commutative ring
%  $B$.  Then there exist unique polynomials
%  $f_1(X,Y),f_2(X,Y),\ldots\in\Z\langle X,Y\rangle$, independent of
%  $X$ and $Y$, such that in $\Z\langle X,Y\rangle[[T]]$ we have:
%%Let $A$, $B$ denote any two $k\times k$ matrices.  There exist unique
%%polynomials $f_1,f_2,\ldots\in \Z[A,B]$ (or should we write $\Z\langle
%%A,B\rangle$?) such that in $\Z[A,B][[T]]$ we have:
%\begin{equation}
%(1-XT)(1-YT) = \prod_{n=1}^\infty (1-f_n(X,Y)T^n).
%\end{equation}
%\end{lemma}
%
%\begin{proof}
%This may be proven by induction?
%\end{proof}
%
%\begin{remark}
%{\em This beautiful lemma (Lemma~\ref{bart}) 
%was pointed out to us by Bart de Smit.}
%\end{remark}
%For example, we have $f_1(X,Y)=X+Y$ (give a few more values?).
%
%
%Let $P(\alpha)=P(\alpha)(T)=\det(1-\alpha T|_V)\in K[T]$ and
% $P(\beta)=P(\beta)(T)=\det(1-\beta T|_V)\in K[T]$. Then
%\begin{equation}
%P(\alpha)\cdot P(\beta) = \prod_{n=1}^\infty
%P(f_n(\alpha,\beta))(T^n).
%\end{equation}
%
%Finally,
%\begin{equation}
%\prod_{i=1}^m (1-\alpha_iT)(1-\beta_iT)=\prod_{i=1}^m\,\prod_{n=1}^\infty 
%(1-f_n(\alpha,\beta)_iT^n)\in A[[T]].
%\end{equation}

%\begin{remark}
%{\em Mention relation to Brauer-Nesbitt theorem.}
%\end{remark}

\section{The case $A=B^n$}\label{bnsec}
% and the \'etale case}

%In this section we consider the case where $A=B^n$.

\subsection{A $B$-basis for $G(B^n/B)$}\label{basisforbn}

Suppose $A$ is the rank $n$ ring $B^n$ over $B$.  Let
$$e_1=(1,0,\ldots,0), \,\,e_2=(0,1,\ldots,0), \,\,\ldots\,\,,\,\,
e_n=(0,0,\ldots,1)$$ be the standard basis for $B^n$ over $B$.  
As in the introduction, for $a\in A$, we let $a^{(i)}$ denote the element $1\otimes\dots\otimes a\otimes\dots\otimes1$ 
of $A^{\otimes n}$ with $a$ in the $i$-{th} tensor factor.  
Then a natural $B$-basis for $(B^n)^{\otimes n}$ is given by
\begin{equation}\label{ebasis}
\bigl\{e_{i_1}^{(1)}e_{i_2}^{(2)}\cdots\, e_{i_n}^{(n)}\bigr\}
\end{equation}
where
$i_1,i_2,\ldots,i_n$ each range between $1$ and $n$.

We claim that a natural $B$-basis for 
$G(B^n/B)$ is also
given by (\ref{ebasis}), but where $(i_1,i_2,\ldots,i_n)$
now ranges over all {\it permutations} of $(1,2,\ldots,n)$.

To see this, we first note that any general element of the form
$e_{i_1}^{(1)}e_{i_2}^{(2)}\cdots\, e_{i_n}^{(n)}\in (B^n)^{\otimes n},$ such that
$(i_1,\ldots,i_n)$ is {\it not} 
a permutation of $(1,2,\ldots,n)$, is in fact zero in
$G(B^n/B)$.  Indeed, let $i\in\{1,\ldots,n\}$ be any element such that
$i\notin\{i_1,\ldots,i_n\}$.  Then since $\sum_{j=1}^n
e_i^{(j)}$ equals $\Tr(e_i)=1$ in $G(B^n/B)$, we deduce
$$e_{i_1}^{(1)}e_{i_2}^{(2)}\cdots e_{i_n}^{(n)} =
%1\times e_1^{(i_1)}e_2^{(i_2)}\cdots e_n^{(i_n)} =
%e_1^{(i_1)}e_2^{(i_2)}\cdots e_n^{(i_n)} \times \bigl(\sum_{j=1}^n
%e_i^{(j)}\bigr)=
\sum_{j=1}^n \bigl[e_i^{(j)}\,\cdot\, e_{i_1}^{(1)}e_{i_2}^{(2)}\cdots\,
e_{i_n}^{(n)} \bigr] = 0$$
in $G(B^n/B)$, as desired.

On the other hand, if $(i_1,\ldots,i_n)$ is a permutation of
$(1,2,\ldots,n)$, then $e_{i_1}^{(1)}e_{i_2}^{(2)}\cdots
e_{i_n}^{(n)}$ is nonzero in $G(B^n/B)$.  To prove this,  
consider the $B$-algebra homomorphism
$\phi_{(i_1,\ldots,i_n)}: (B^n)^{\otimes n} \to B$ defined by
$$\phi_{(i_1,\ldots,i_n)}\bigl(e_{i}^{(j)}\bigr)= \left\{
\begin{array}{rl}
1& \mbox{if $i=i_j$}\\
0& \mbox{otherwise}
\end{array}\right.
.$$
%$ if $i=i_j$, and $e_{i}^{(j)}\mapsto 0$
%otherwise. 
Then it is evident that the kernel of
$\phi_{(i_1,\ldots,i_n)}$ contains $I(B^n,B)$, so that $\phi$ descends
to a map 
\[
\bar\phi_{(i_1,\ldots,i_n)}: G(B^n/B) \to B.
\]
Moreover, 
we have $\bar\phi_{(i_1,\ldots,i_n)}\bigl(e_{i_1}^{(1)}e_{i_2}^{(2)}\cdots\,
e_{i_n}^{(n)}\bigr)=1$.  We conclude that 
$e_{i_1}^{(1)}e_{i_2}^{(2)}\cdots e_{i_n}^{(n)}$ is nonzero in
$G(B^n/B)$. 

Finally, note that $e_{i_1}^{(1)}e_{i_2}^{(2)}\cdots\, e_{i_n}^{(n)}$
is an idempotent for any permutation $(i_1,\ldots,i_n)$, 
and if $(j_1,\ldots,j_n)$ is any other permutation of 
$(1,\ldots,n)$, then 
$$e_{i_1}^{(1)}e_{i_2}^{(2)}\cdots\, e_{i_n}^{(n)}
\,\cdot\, e_{j_1}^{(1)}e_{j_2}^{(2)}\cdots\, e_{j_n}^{(n)}=0.$$  Hence the
set (\ref{ebasis}), where $(i_1,i_2,\ldots,i_n)$
ranges over all {\it permutations} of $(1,2,\ldots,n)$, forms a
set of nonzero orthogonal idempotents that spans $G(B^n/B)$ as a
$B$-module. We conclude that it forms a basis for $G(B^n/B)$, as claimed.

Finally, since this basis for $G(B^n/B)$ 
has $n!$ elements, and consists entirely 
of idempotents, we conclude that $G(B^n/B)\cong B^{n!}$ as
$B$-algebras, as desired.  

We have proven the first assertion of Theorem \ref{etalecase}.

\subsection{The action of $S_n$ on $G(B^n/B)$}\label{snaction}

It is interesting to consider the natural action of $S_n$ on
$(B^n)^{\otimes n}$, and on $G(B^n/B)$, obtained by permuting the
tensor factors.  From this point of view, we see that 
\[
G(B^n/B)\cong B[S_n]
\]
as $B[S_n]$-modules.  The isomorphism is given by
$e_{i_1}^{(1)}e_{i_2}^{(2)}\cdots\, e_{i_n}^{(n)}\mapsto\sigma$, where
$\sigma\in S_n$ denotes the permutation $j\mapsto i_j$.  If we write
$e_\sigma:=e_{i_1}^{(1)}e_{i_2}^{(2)}\cdots\, e_{i_n}^{(n)}$, then the
action of an element $g\in S_n$ on $G(B^n/B)$ is given by
\[g\cdot e_\sigma = e_{g\sigma}.\]

Let $A=B^n$.  Under the action of $S_n$ on $G(A/B)$, the ring
$A^{(1)}\subset G(A/B)$ given by the image of $A\otimes 1\otimes
\cdots\otimes 1$ is fixed by the group $S_{n-1}^{(1)}$, the
subgroup of $S_n$ fixing 1.  Note that $A^{(1)}\cong A$.  
Similarly, as in Galois theory, 
the other ``conjugate'' copies of
$A$ in $G(A/B)$, namely $A^{(j)}=1\otimes\cdots\otimes
A\otimes\cdots\otimes 1$ (where the $A$ is in the $j$-th tensor
factor) for $j=2,\ldots, n$ are fixed by the conjugate subgroups
$S_{n-1}^{(j)}\subset S_n$ fixing $j$ for $j=2,\ldots, n$, respectively.  

In terms of these subgroups $S_{n-1}^{(j)}\subset S_n$, we may express the
idempotents $e_i^{(j)}$ in terms of our orthogonal 
basis $\{e_\sigma\}_{\sigma\in
  S_n}$ of idempotents for $G(A/B)$ as follows:
\begin{equation}\label{eij}
e_i^{(j)} = \sum_{\sigma\in S_{n-1}^{(j)}g_{ji}} e_\sigma,
\end{equation}
where $g_{ji}$ denotes any element in $S_n$ taking $i$ to $j$.  That
is, $e_i^{(j)}$ corresponds to the sum of $e_\sigma$ over a right coset
of $S_{n-1}^{(j)}$, namely, the right coset consisting of elements in $S_n$ taking
$i$ to $j$.  
%In particular, we again see from the identity (\ref{eij})
%that $e_i^{(j)}\subset A^{(j)}$ is indeed fixed under the left action
%of $S_{n-1}^{(j)}$.
%Our expression 
%This expression for $e_i^{(j)}$ will be useful to us when we consider
%the case where $A/B$ is a separable extension of fields.

%\subsection{The action of $S_n$ on $G(B^n/B)$ with $G$-structure}\label{snactiong}
%
%To make a full analogy with Galois theory, we consider a transitive
%subgroup $G$ of $S_n$.  We wish to constructz
For an extension of these results to general products of rings $A=A_1\times\cdots\times A_k$, 
see Section~\ref{directsumsec}.

\section{The case of fields}\label{fieldsec}

Before proving Theorem \ref{fieldcase}, we begin by recalling the correspondence between 
finite \'etale extensions of a field and Galois sets.  
Let $K$ be a field and fix a separable closure $\bar{K}$ of $K$.  Then, given a finite \'etale 
extension $L/K$, consider the set $S_{L/K}$ of $K$-algebra homomorphisms from $L$ to $\bar{K}$.  
We see that $G_K:=\textrm{Gal}(\bar{K}/K)$ acts on $S_{L/K}$ by composition: 
if $\tau\in G_K$ and $\psi\in S_{L/K}$, then $\tau\circ\psi\in S_{L/K}$.  Moreover, this action is 
continuous when $G_K$ is given the profinite topology and $S_{L/K}$ is given the discrete 
topology, i.e., the action of $G_K$ factors through a finite quotient of $G_K$.  We therefore 
obtain a functor
\begin{eqnarray}
\label{eqn:equiv}
(\textrm{finite\ \'etale\ }K\textrm{-algebras})\longrightarrow (\textrm{finite\ sets\ with\ continuous\ }G_K\textrm{-action})
\end{eqnarray}
sending $L$ to $S_{L/K}$, which is in fact an equivalence of categories (see, e.g., \cite[Thm.~2.9]{Lenstra}).

Note that if $L/K$ is finite \'etale of degree $n$, then $\bar{K}\otimes_K L$ is isomorphic to $\bar{K}^n$ as a 
$\bar{K}$-algebra.  More canonically, we have an isomorphism
\begin{eqnarray}
\label{eqn:splitting}
\bar{K}\otimes_K L \longrightarrow \bar{K}^{S_{L/K}}:=\prod_{s\in S_{L/K}}\bar{K}\nonumber\\
1\otimes\ell \longmapsto (s(\ell))_{s\in S_{L/K}}
\end{eqnarray}
of $\bar{K}$-algebras.  The Galois group $G_K$ acts on $\bar{K}\otimes_K L$ through the left tensor factor, and 
therefore induces an action on $S_{L/K}$ via (\ref{eqn:splitting}); this is precisely the $G_K$-action on 
$S_{L/K}$ in (\ref{eqn:equiv}).

We now turn to the problem of describing the Galois set $S_{G(L/K)/K}$ in terms of the Galois set 
$S_{L/K}$, where $L/K$ is a finite \'etale extension of degree $n$.  
%We index the $K$-algebra homomorphisms from $L$ to $\bar{K}$ by $1,\dots,n$.  This yields an identification of $S_{L/K}$ with $\{1,2,\dots,n\}$.  
By Theorem \ref{thm:functorial},
\[
\bar{K}\otimes_K G(L/K)\cong (\bar{K}^{S_{L/K}})^{\otimes{n}}/I(\bar{K}^{S_{L/K}},\bar{K})
\]
as $\bar{K}$-algebras.  The $G_K$-action on the left tensor factor of $\bar{K}\otimes_K G(L/K)$ yields an 
action on $(\bar{K}^{S_{L/K}})^{\otimes{n}}/I(\bar{K}^{S_{L/K}},\bar{K})$ defined by
\[
\tau(e_s^{(i)})=e_{\tau(s)}^{(i)},
\]
where $\tau\in G_K$, $s\in S_{L/K}$, and $e_s$ denotes the element $(\delta_{ss'})_{s'}$ of $K^{S_{L/K}}$, where $\delta$ is the Kronecker delta 
function.

Given two sets $T$ and $T'$, we let $\bij(T,T')$ denote the set of bijections $f:T\to T'$.  Let $[n]$ denote the set $\{1,2,\dots,n\}$.  Then Section \ref{snaction} shows that
\[
\bar{K}[\bij([n],S_{L/K})]\stackrel{\cong}{\longrightarrow}(\bar{K}^{S_{L/K}})^{\otimes{n}}/I(\bar{K}^{S_{L/K}},\bar{K})
\]
as $\bar{K}$-algebras, where $\pi\in \bij([n],S_{L/K})$ is sent to 
$e_{\pi(1)}^{(1)}e_{\pi(2)}^{(2)}\dots e_{\pi(n)}^{(n)}$.  We see then that the action of $G_K$ on 
$\bar{K}\otimes_K G(L/K)$ induces an action on $\bij([n],S_{L/K})$ where $G_K$ acts on $\bij([n],S_{L/K})$ via its action on $S_{L/K}$; that is, 
$\tau\in G_K$ acts on $\pi\in\bij([n],S_{L/K})$ by
\[
(\tau(\pi))(j)=\tau(\pi(j)).
\]
Hence, the Galois set corresponding to $G(L/K)$ is given by $\bij([n],S_{L/K})$ with this action of $G_K$. 
%More canonically, 
%\begin{eqnarray}
%\label{eqn:canGalois}
%S_{G(L/K)}=\textrm{Perm}(S_{L/K})
%\end{eqnarray}
%as sets and the $G_K$-action on $S_{G(L/K)}$ is given by 
%\begin{eqnarray}
%\label{eqn:canGalois2}
%(\tau(f))(s)=\tau(f(s)),
%\end{eqnarray}
%where $\tau\in G_K$, $f\in \textrm{Perm}(S_{L/K})$, and $s\in S_{L/K}$.

We now prove Theorem \ref{fieldcase}.  Let $L$ be a finite separable 
field extension of $K$ of degree $n$, and let $M$
be the Galois closure of $L/K$ in $\bar{K}$.  Let $G$ denote the Galois group of
$M/K$; thus $G$ acts faithfully and transitively on $S_{L/K}$, 
% elements, namely, on the $n$ embeddings of $L$ into $M$, which we index by $1,\ldots,n$.  
%Let $|G|=m$, and let $r=n!/m$.  
%, where $S_n$ denotes the group of permutations on the set $\{1,\ldots,n\}$.  Using (\ref{eqn:canGalois}), we show $G(L/K)\cong M^r$ as $K$-algebras.
and so $G$ sits naturally inside $\bij(S_{L/K},S_{L/K})$.  Note that $\bij(S_{L/K},S_{L/K})$ carries a $G_K$-action via post-composition.  
%Our indexing of the embeddings of $L$ into $M\subset\bar{K}$ identifies $S_{L/K}$ with $\{1,\dots,n\}$.  We can then define an action of $G_K$ on  $S_n$ by $(\tau(\pi))(j)=\tau(\pi(j))$, where $\tau\in G_K$ and $\pi\in S_n$; note that this yields the same Galois set as that for $G(L/K)$.  
Since $M$ is the Galois closure of $L/K$, this action of $G_K$ 
%on $S_n$ 
restricts to an action on $G$.  
%\subset S_n$.  
The set $G$ equipped with this action is the Galois set corresponding to $M$.  Since the action of $G$ on $S_{L/K}$ is faithful and transitive, we see that as Galois sets, 
\[
\bij(S_{L/K},S_{L/K})=\coprod_r G=S_{M^r/K}.
\]
%Now $M^r$ corresponds to the disjoint union of $r$ copies of this Galois set.  As sets, $S_{M^r}$ is of course in bijection with $S_n=S_{G(L/K)}$; what we must show is that there is a $G_K$-equivariant bijection.  Writing $S_{M^r}$ as 
%\[\coprod_{a\in G \backslash S_n}G\cdot a,\]
%we see that the  action of $\tau\in G_K$ is given by $\tau(ga)=\tau(g)a$, where $g\in G$.  Note that this agrees with the action of $G_K$ on $S_n$ defined by (\ref{eqn:canGalois2}).
Therefore, choosing a bijection of $[n]$ and $S_{L/K}$ yields an isomorphism $S_{G(L/K)/K}\cong S_{M^r/K}$ of Galois sets.  
As a result, 
%$S_{G(L/K)}$ and $S_{M^r}$ are isomorphic as Galois sets; hence 
$G(L/K)$ and $M^r$ are isomorphic as $K$-algebras.

\section{The \'etale case}\label{etalesec}

We have already proven the first assertion of Theorem~\ref{etalecase}.
Suppose, more generally, that $A$ is any ring that is \'etale and
locally free of rank $n$ over $B$.  Then we claim that $G(A/B)$ is an
\'etale $B$-algebra which is locally free of rank $n!$; this is the second
assertion of Theorem~\ref{etalecase}.

%\begin{proposition}
%If $B$ is an \'etale $A$-algebra which is free of rank $n$ as an $A$-module, then $G(B/A)$ is an \'etale $A$-algebra 
%of rank $n$!.
%\end{proposition}
%\begin{proof}
%If $X$ and $Y$ are non-empty schemes and $f:X\to Y$ is a finite \'etale morphism of degree $n$, then there  
%exists an \'etale cover $Z\to Y$ such that $X\times_Z Y\to Z$ is the trivial \'etale cover (see 
%for example \cite[p.26]{milne}).  By functoriality of the $S_n$-closure, the proposition can be proved \'etale locally 
%on $A$, where we may therefore assume that $B=A^n$.  The result then follows from ??.
%\end{proof}
%\begin{proof}
To prove the claim, we first require a definition.  An \'etale
$B$-algebra $C$ is called an {\it \'etale cover} of $B$ if the induced
morphism $\Spec C\to\Spec B$ is surjective.  
%(see, e.g., \cite[p.\ 47]{milne}).  
The key fact we
use in proving the second assertion of Theorem~\ref{etalecase} is
the following well-known lemma:
% (see, for example, \cite[p.\ 156]{milne} or \cite[Thm.~5.10]{Lenstra}):
%proof of \cite[Prop 6.16]{milne}) 

\begin{lemma}\label{split}
  Let $R$ be any $B$-algebra that is finitely generated as a
  $B$-module.  Then $R$ is \'etale and locally free of rank $n$ over
  $B$ if and only if there exists an \'etale cover $C$ of $B$ such
  that $R\otimes_B C\cong C^n$ as $C$-algebras.
\end{lemma}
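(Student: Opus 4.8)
The statement is an ``if and only if,'' and the plan is to treat the two implications by rather different standard techniques: faithfully flat descent for the implication ``a splitting cover exists $\Rightarrow$ $R$ is \'etale and locally free,'' and an induction on $n$ using the diagonal idempotent of an \'etale algebra for the converse.

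For the direction ``$\Leftarrow$'', suppose $C$ is an \'etale cover of $B$ with $R\otimes_B C\cong C^n$ as $C$-algebras. The point is that $B\to C$ is faithfully flat (\'etale together with surjectivity on spectra), so properties of $R\otimes_B C$ as a $C$-algebra descend to $R$ as a $B$-algebra. Concretely: $C^n$ is flat over $C$, and $R$ is a finitely generated $B$-module whose base change $R\otimes_B C\cong C^n$ is finitely presented over $C$, so by faithfully flat descent $R$ is flat and finitely presented over $B$, hence finitely generated projective over $B$; its rank function on $\Spec B$ is locally constant, and after pullback along the surjection $\Spec C\to\Spec B$ it is identically $n$, so $R$ is locally free of rank $n$ over $B$. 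For \'etaleness I would argue via the discriminant: $\Disc(R/B)\in B$ maps to $\Disc((R\otimes_B C)/C)=\Disc(C^n/C)\in C^\times$, and since $C$ is faithfully flat over $B$, an element of $B$ that becomes a unit in $C$ is already a unit in $B$; hence $\Disc(R/B)\in B^\times$ and $R$ is \'etale over $B$.

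For the direction ``$\Rightarrow$'', suppose $R$ is \'etale and locally free of rank $n$ over $B$. I would induct on $n$; the cases $n\le 1$ are immediate (take $C=B$, using that a rank $1$ locally free $B$-algebra is $B$ via its unit map). For $n\ge 2$, view $R\otimes_B R$ as an $R$-algebra through the first tensor factor, so it is \'etale and locally free of rank $n$ over $R$. Since $R$ is \'etale---hence finite, finitely presented, and unramified---over $B$, the diagonal $\Spec R\to\Spec(R\otimes_B R)$ is both an open and a closed immersion, so the kernel of the multiplication map $R\otimes_B R\to R$ is generated by an idempotent. This yields a decomposition $R\otimes_B R\cong R\times R^{(1)}$ of $R$-algebras in which $R^{(1)}$ is \'etale and locally free of rank $n-1$ over $R$. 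Applying the inductive hypothesis over the base $R$ gives an \'etale cover $C$ of $R$ with $R^{(1)}\otimes_R C\cong C^{n-1}$, and then
\[
R\otimes_B C\;\cong\;(R\otimes_B R)\otimes_R C\;\cong\;(R\times R^{(1)})\otimes_R C\;\cong\;C\times C^{n-1}\;=\;C^n
\]
as $C$-algebras. Finally $C$ is an \'etale cover of $B$: it is \'etale over $B$ because \'etale morphisms compose, and $\Spec C\to\Spec R\to\Spec B$ is surjective because the first map is surjective ($C$ being an \'etale cover of $R$) and the second is surjective (as $R$, locally free of positive rank, is faithfully flat over $B$).

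The step that carries the real content---as opposed to routine descent and composition arguments---is the splitting of the diagonal: the fact that for a finite, finitely presented, unramified (equivalently, \'etale) $B$-algebra $R$, the diagonal ideal $\ker(R\otimes_B R\to R)$ is generated by an idempotent. I would expect to cite this (e.g.\ from \cite{milne} or \cite{Lenstra}) rather than reprove it. The only other place requiring a little care is the ``$\Leftarrow$'' direction, where one must first promote ``finitely generated as a $B$-module'' to ``finitely presented'' before concluding projectivity; this is again standard faithfully flat descent of finite presentation.
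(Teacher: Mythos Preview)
Your proof is correct, but it is worth noting that the paper does not actually prove this lemma: it merely cites it as a known result (\cite[p.\ 156]{milne} or \cite[Thm.~5.10]{Lenstra}) and uses it as a black box. So there is no ``paper's own proof'' to compare against; you have supplied an argument where the authors were content to give a reference.

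That said, your argument is essentially the standard one found in those references. The ``$\Leftarrow$'' direction is straightforward faithfully flat descent, and you have correctly flagged the one subtle point, namely upgrading ``finitely generated as a $B$-module'' to ``finitely presented'' before concluding projectivity. The ``$\Rightarrow$'' direction via induction on $n$, using the idempotent splitting $R\otimes_B R\cong R\times R^{(1)}$ coming from the open-and-closed diagonal of an \'etale morphism, is exactly how Lenstra proves it. One minor remark: in the base case $n=1$, the claim that a locally free rank~$1$ $B$-algebra is isomorphic to $B$ via its unit map deserves a one-line justification (locally, if $R=Be$ then $1_R=ue$ and $e=1_R\cdot e=uce$ where $e^2=ce$, forcing $u\in B^\times$), though this is routine. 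Also, when you invoke ``\'etale $\Rightarrow$ unramified,'' bear in mind that the paper's working definition of \'etale is ``discriminant is a unit,'' so strictly speaking you are using the (standard) equivalence of this with the usual formally-unramified condition for finite locally free algebras; since you are already citing \cite{milne} or \cite{Lenstra} for the diagonal-idempotent step, this is not a gap.
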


\begin{proof}
First, the proof of \cite[Thm.~11.4]{milne} shows that $R$ is locally free of rank $n$ over $B$ if and only if there exists an \'etale cover $C$ of $B$ such that $R\otimes_B C\cong C^n$ as $C$-modules.

Now if $C$ is an \'etale cover of $B$ as in the statement of the lemma, then $R\otimes_B C$ is an \'etale $C$-algebra.  By \'etale descent, $R$ is then an \'etale $B$-algebra (see, e.g.,  \cite[Prop.~1.15(x)]{vistoli}), and it is also locally free of rank $n$ over $B$ by the previous paragraph.

Conversely, if $R$ is \'etale and locally free of rank $n$ over $B$, then $R$ is an \'etale cover of $B$.  There then exists an $R$-algebra $R'$ such that we have an isomorphism of $R$-algebras $R\otimes_B R\cong R\times R'$ (see, e.g., \cite[Remark after Lemma~1.1.17]{masdeu}).  Since $R\otimes_B R$ is \'etale and locally free of rank $n$ over $R$, it follows that $R'$ is \'etale and locally free of rank $n-1$ over $R$.  By induction on $n$, we conclude that there exists an \'etale cover $C$ of $B$ such that $R\otimes_B C\cong C^n$ as $C$-algebras.
%
%First, the proof of \cite[Thm.\ 11.4]{milne} shows that $R$ is locally free of rank $n$ over $B$ if and only if there exists an \'etale cover $C$ of $B$ such that $R\otimes_B C\cong C^n$ as $C$-modules.
%
%If $C$ is an \'etale cover of $B$ as in the statement of the lemma, then $R\otimes_B C$ is an \'etale $C$-algebra.  It follows from \cite[Prop.\ 1.15(x)]{vistoli} that $R$ is an \'etale $B$-algebra.  It is locally free of rank $n$ by the previous paragraph.
%
%Conversely, if $R$ is \'etale and locally free of rank $n$ over $B$, then $R$ is an \'etale cover of $B$.  By \cite[Cor.\ 3.12]{milne}, there is an $R$-algebra $R'$ such that $R\otimes_B R\cong R\oplus R'$ as $R$-algebras.  Since $R\otimes_B R$ is \'etale and locally free of rank $n$ over $R$, it follows that $R'$ is \'etale and locally free of rank $n-1$ over $R$.  Hence, by induction on $n$, there exists an \'etale cover $C$ of $B$ such that $R\otimes_B C\cong C^n$ as $C$-algebras.
\end{proof}

%that a $B$-algebra $R$ which is finitely generated as a $B$-module is
%an \'etale $B$-algebra if and only if there exists an \'etale cover
%$C$ of $B$ such that
%$R\otimes_B C\cong C^n$ as a $C$-algebra for some $n$;
%see, for example, 
%%the paragraph before Exercise 1.3 in 
%\cite[V \S1,p.?]{milne}.

%{}Using this key fact, 
Since $A$ is \'etale and locally free of rank $n$ over $B$, by
Lemma~\ref{split} we see that there exists an \'etale cover $C$ of $B$
such that $A\otimes_B C\cong C^n$ as $C$-algebras.  By Theorem \ref{thm:functorial}, 
we then have
\[
G(A/B)\otimes_{B}C\cong G(C^n/C)\cong C^{n!}
\]
as $C$-algebras.  Applying Lemma~\ref{split} once again, we conclude that
$G(A/B)$ is \'etale and locally free of rank $n!$ over $B$, as desired.

We can say more in terms of the underlying Galois sets when $\Spec B$ is connected.  
Recall that there is an equivalence of categories between finite \'etale extensions of $B$ and finite sets equipped 
with a continuous action by a certain profinite group $\pi_1^{\textrm{\'et}}(B)$ called the 
\emph{\'etale fundamental group} of $B$ (see, e.g., \cite[Thm.~1.11]{Lenstra}).  When $B=K$ is a field, $\pi_1^{\textrm{\'et}}(K)$ is nothing 
other than $G_K$.  By the same argument as in the case of fields, 
one shows that if $A/B$ is a finite \'etale extension of degree~$n$ corresponding to a finite set $S$ with a continuous action 
by $\pi_1^{\textrm{\'et}}(B)$, then $G(A/B)$ corresponds to the set $\bij([n],S)$ with 
$\pi_1^{\textrm{\'et}}(B)$-action induced by the action on $S$.
%\[(\tau(f))(s)=\tau(f(s)),\]
%where $\tau\in \pi_1^{\textrm{\'et}}(B)$, $f\in \textrm{Perm}(S)$, and $s\in S$.

%To see this, it suffices to find an \'etale cover $\Spec C\to\Spec B$ (i.e. an \'etale 
%$B$-algebra $C$ such that the induced morphism $\Spec C\to\Spec B$ is surjective) with $G(A/B)\otimes_{B}C$ 
%an \'etale $C$-algebra.  By \cite[p.26]{milne}, there is an \'etale cover $\Spec C\to\Spec B$ such that 
%$A\otimes_{B}C=C^n$.  (Explicitly, $C$ can be taken to be $A^{\otimes n}$.)  With this choice of $C$, functoriality of 
%the $S_n$-closure shows that 
%\[
%G(A/B)\otimes_{B}C=G(C^n/C)=C^{n!},
%\]
%and so $G(A/B)$ is \'etale of rank $n!$.
%\end{proof}

%\begin{proposition}\label{bn} 
%A $B$-basis for $G(B^n/B)=(B^n)^{\otimes n}/I(B^n,B)$ is given by
%$$\{e_1^{(i_1)}e_2^{(i_2)}\cdots e_n^{(i_n)}\}$$ where
%$(i_1,i_2,\ldots,i_n)$ ranges over all permuations of $1,2,\ldots,n$.
%\end{proposition}

%\begin{proof}
%
%\end{proof}

\section{The monogenic case}\label{monosec}

In this section, we examine the situation where $B$ is monogenic over
$A$.  We prove:

\begin{theorem}\label{mon}
  Let $f$ be a monic polynomial with coeffiecients in $B$, and let 
$A=B[x]/f(x)$ denote the corresponding monogenic ring of rank $n$ over $B$. 
%where $x\in A$ satisfies a degree $n$ polynomial
%  equation over $B$.  
Then the ring $G(A/B)$ is a ring of rank $n!$ over
  $B$, a basis of it being monomials of the form
\begin{equation}\label{monobasis}
\prod_{i=1}^n x_i^{e_i}
\end{equation}
where the exponents $e_i$ satisfy $0\leq  e_i < i$; here
$x_1,x_2,\ldots,x_n$ denote the images in $G(A/B)$ of 
%  $x\otimes \ldots\otimes1\otimes 1$, \,$1\otimes x\otimes \ldots\otimes
 % 1$, \,$\ldots$\,, \,$1\otimes
 % 1\otimes \ldots\otimes x
$x^{(1)}, x^{(2)}, \ldots, x^{(n)}   
  \in A^{\otimes n}$ respectively.
 %, as defined in the introduction.
\end{theorem}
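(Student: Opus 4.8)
The plan is to show that the monomials in (\ref{monobasis}) both span $G(A/B)$ over $B$ and are linearly independent, thereby exhibiting a $B$-basis of size $\prod_{i=1}^n i = n!$. First I would set up notation: write $A=B[x]/f(x)$ with $f(x)=x^n-c_1x^{n-1}+\cdots+(-1)^nc_n$, so that $1,x,\dots,x^{n-1}$ is a $B$-basis of $A$ and $c_j = s_j(x)$. In $A^{\otimes n}$, and hence in $G(A/B)$, each tensor slot contributes an element $x_i$ satisfying $f(x_i)=0$ (this is just the relation $P_{x}(x)=0$ applied in the $i$-th factor, which holds by Cayley--Hamilton). By Theorem~1 and the discussion in \S\ref{funcsec}, the ideal $I(A,B)$ is generated by the relations (\ref{fundrels}) applied to $a=x$ alone, since $\{1,x,\dots,x^{n-1}\}$ is a $B$-basis and $s_j(1)$, together with the $s_j$ on powers of $x$, are all polynomial consequences of the $s_j(x)$; more simply, once we adjoin the single element $x$ the whole algebra $A$ is generated, so it suffices to impose the relations for $a=x$. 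Thus
\[
G(A/B) \;=\; B[x_1,\dots,x_n]\Big/\Big(\,e_j(x_1,\dots,x_n)-c_j \ :\ j=1,\dots,n\,\Big),
\]
where $e_j$ denotes the $j$-th elementary symmetric polynomial; in other words $G(A/B)$ is the quotient of the polynomial ring in $n$ variables by the ideal making the elementary symmetric functions equal to the coefficients of $f$. (One should check that the relations (\ref{fundrels}) for $a=x$ are exactly these: $s_j(x)=c_j$ and $\sum_{i_1<\cdots<i_j} x_{i_1}\cdots x_{i_j}=e_j(x_1,\dots,x_n)$.)

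The spanning statement is then the classical fact that $B[x_1,\dots,x_n]$, viewed as a module over the subring of symmetric polynomials, is free with basis the ``staircase'' monomials $\prod x_i^{e_i}$, $0\le e_i<i$ --- equivalently, that $B[x_1,\dots,x_n]/(e_1,\dots,e_n)$ (the coinvariant algebra) has this monomial basis. I would prove spanning directly by a division/reduction argument: using $e_n(x_1,\dots,x_n)=c_n$ we may rewrite $x_n^{\,k}$ for $k\ge n$ in terms of lower powers of $x_n$ with coefficients in $B[x_1,\dots,x_{n-1}]$; more carefully, the Newton-type identities let one rewrite, for each $i$, the power $x_i^{\,i}$ in terms of $x_i^{<i}$ and the variables $x_1,\dots,x_{i-1}$, so an induction on the multidegree (ordered so that reducing $x_i^{\,i}$ strictly decreases the relevant statistic) shows every monomial reduces to a $B$-combination of the staircase monomials. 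This is the one step requiring genuine care in bookkeeping, but it is a standard manipulation with elementary symmetric functions.

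For linear independence --- the step I expect to be the main obstacle in presentation, though not in substance --- I would use a base-change and nondegeneracy argument rather than a direct computation over $B$. By functoriality (Theorem~1), it suffices to treat the ``generic'' case: let $B_0=\Z[c_1,\dots,c_n]$ be a polynomial ring and $f_0$ the generic monic polynomial of degree $n$ over $B_0$; then $G(A/B)=G(A_0/B_0)\otimes_{B_0}B$, and since the staircase monomials are obtained from those of $G(A_0/B_0)$ by base change, it is enough to show they are $B_0$-linearly independent in $G(A_0/B_0)$. Now $B_0$ is a domain; pass to its fraction field and then to a splitting field, where $f_0$ has $n$ distinct roots $\theta_1,\dots,\theta_n$ (the generic polynomial is separable, its discriminant being a nonzero polynomial). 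Over this field, $G(A_0/B_0)$ base-changes to the algebra presented by making $e_j(x_1,\dots,x_n)$ equal to $e_j(\theta_1,\dots,\theta_n)$; sending $x_i\mapsto \theta_{\pi(i)}$ for $\pi\in S_n$ gives $n!$ distinct $\bar B_0$-points (the Vandermonde in the $\theta$'s being invertible), and the staircase monomials evaluated at these $n!$ points form an invertible matrix --- again essentially a Vandermonde-type determinant, which is exactly the classical proof that the coinvariant algebra of $S_n$ has dimension $n!$ with the staircase basis. Hence the staircase monomials are linearly independent over $\bar B_0$, a fortiori over $B_0$, and the count $\dim = 1\cdot 2\cdots n = n!$ follows. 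Combining spanning and independence gives that $G(A/B)$ is free of rank $n!$ over $B$ with the asserted basis, proving Theorem~\ref{mon} (and, via the localization remarks already in the excerpt, Theorem~\ref{monocase}).
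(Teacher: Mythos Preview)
Your proposal is correct and follows essentially the same route as the paper: both first reduce $I(A,B)$ to the relations for $a=x$ alone, identify $G(A/B)$ with $B[x_1,\dots,x_n]/(e_j(x_1,\dots,x_n)-c_j)$, and then appeal to the fact that the staircase monomials form a basis of this quotient. The only difference is packaging: the paper invokes the symmetric function theorem---that $\Z[X_1,\dots,X_n]$ is free of rank $n!$ over $S=\Z[\Sigma_1,\dots,\Sigma_n]$ with the staircase basis---and simply tensors along the map $S\to B$, $\Sigma_j\mapsto c_j$; your spanning-by-reduction argument and your independence argument (pass to the universal base $B_0=\Z[c_1,\dots,c_n]$, then to a splitting field, then evaluate at the $n!$ orderings of the roots) together amount to a sketch of that classical theorem, with your $B_0$ playing exactly the role of the paper's $S$. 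One small bookkeeping slip: with the paper's convention $0\le e_i<i$, the degree-$i$ relation you need expresses $x_i^{\,i}$ in terms of lower powers of $x_i$ and the variables $x_{i+1},\dots,x_n$ (not $x_1,\dots,x_{i-1}$), since $x_1$ is the variable forced to exponent $0$; this does not affect the argument.
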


\begin{proof}
If $A=B[x]$, then $I(A,B)$ is generated by the relations
(\ref{fundrels}) where $a=x$.  This is because the powers
$1,x,x^2,\ldots,x^{n-1}$ of $x$ form a basis for $A$ over $B$, and the
elementary symmetric functions $s_i(x^j)$ of powers $x^j$ of $x$ are
integer polynomials in the elementary symmetric functions $s_i(x)$ of
$x$ (by the {\it Newton--Girard identities}; see, e.g.,~\cite[pp.\ 99--101]{vdw}).  
Hence the relations (\ref{fundrels}) for $a=x^j$ ($j>1$) 
are implied by those for which $a=x$.

  Now let the characteristic polynomial of $\times x:A\to A$ be given by
  $P_x(T)=T^n-s_1(x)T^{n-1}+s_2(x) T^{n-2}+\cdots+(-1)^n s_n(x)$.
  Then a direct construction of $G(A/B)$ is as follows.  By the
  symmetric function theorem, the ring $R = \Z[X_1,...,X_n]$ is a free
  module of rank $n!$ (with basis given as above) over the polynomial
  ring $S = \Z[\Sigma_1,...,\Sigma_n]$, where the $\Sigma_i$ 
denote the elementary
  symmetric polynomials $\Sigma_1 = X_1 + ... + X_n$, etc. Using the
  coefficients of $P_x$, we get a map $\psi:S\to B$ defined by sending
  $\Sigma_i$ to $s_i(x)$.  This
  allows us to construct the $B$-algebra $R\otimes_S B$, which is
  then free over $B$ of rank $n!$. 

  We claim that the algebra $R\otimes_S B$ is isomorphic to $G(A/B)$.
Indeed, we may define a map $$\phi:A^{\otimes n}=B[x^{(1)},\ldots,x^{(n)}]\to
R\otimes_S B$$ by sending $x^{(i)}\mapsto X_i\otimes 1$.  Then, by the definition of 
$R\otimes_S B$, the kernel of $\phi$ consists of all polynomials in
$B[x^{(1)},\ldots,x^{(n)}]$ that are symmetric in the $x^{(i)}$ (so can be expressed as polynomials in the elementary symmetric functions $e_j(x^{(1)},\ldots,x^{(n)})$)
and that evaluate to 0 when $e_j(x^{(1)},\ldots,x^{(n)})$ is replaced by $s_j(x)$ for every $j$.  But
these polynomials are precisely the elements of the 
ideal $I(A,B)$, and thus
$G(A/B)=A^{\otimes n}/I(A,B)\cong R\otimes_SB$, as desired.
 %
 % This is shown by constructing maps in both directions: $R\otimes_S B
 % \to G(A/B)$ (sending $X_i\mapsto x_i$) and $G(A/B) \to R\otimes_S B$
%(sending $x_i\mapsto X_i$).
\end{proof}

%In particular, we have proven Theorem~\ref{monocase}.
\vspace{-.1in}
\begin{remark}
{\em This construction in the monogenic case is more or less given in Grothendieck~\cite[Lemme 1 and corollary next page]{semchevalley}}.
\end{remark}

In the case that $A$ is monogenic over the base ring $B$, we may use
Theorem~\ref{mon} to compute the discriminant $\Disc(G(A/B))$ of the
$S_n$-closure of $A$ in terms of the discriminant $\Disc(A)$ of $A$.  
The definition of $\Disc(A)\in B$ as given in the introduction (prior to Theorem~\ref{etalecase}) is only well-defined up to factors in $B^{\times2}$.  However, if $A$ is a {\it based ring} of rank $n$ over $B$---i.e., $A$ comes equipped with a basis of size $n$ over $B$---then $\Disc(A/B)$ is well-defined as an element of $B$, namely as the determinant of the trace form $\Tr(xy)$ on $A$ expressed as an $n\times n$ matrix over $B$ in terms of this chosen basis. We then
find that, for $n\geq 2$, we have
\begin{equation}\label{discid}
\Disc(G(A/B))=\Disc(A)^{n!/2}
\end{equation}
as discriminants of based rings over $B$; here $A$ is equipped with its power basis and $G(A/B)$ is given the basis as in Theorem~\ref{mon}.
 
To see this, note that it suffices again to prove this identity in
the case $B=\Z[\Sigma_1,\ldots,\Sigma_n]$,
$A=B[X_1]$, and $G(A/B) = \Z[X_1,...,X_n]$.  The identity
(\ref{discid}) is trivial for $n=2$, while for general~$n$ it follows
by induction.  Indeed, we have the equalities 
$A=\Z[X_1][\Sigma'_1,\ldots,\Sigma'_{n-1}]$ and $G(A/B)=A[X_2,\ldots,X_n]$, where
$\Sigma'_1,\ldots,\Sigma'_{n-1}$ denote the elementary symmetric
polynomials in $X_2,\ldots,X_n$.  
The proof of Theorem~\ref{mon} now implies that
$G(A/B)=G(A[X_2]/A)$, so that $G(A/B)$ is free of rank $(n-1)!$
over~$A$. The induction hypothesis then gives 
$$\Disc(G(A/B)/A)=\Disc(A[X_2]/A)^{(n-1)!/2}.$$  
In the tower of ring extensions $G(A/B)\,/\,A\,/\,B$, we then see that
\[\begin{array}{rcl}
\Disc(G(A/B)) &=& {\mathbf N}^A_B(\Disc(G(A/B)/A))\cdot
\Disc(A)^{(n-1)!} \\[.1in] &=& \Disc(A)^{(n-2)\cdot(n-1)!/2}\cdot \Disc(A)^{(n-1)!}\\[.1in]
&=&\Disc(A)^{n!/2}, 
\end{array}\]
proving (\ref{discid}).

%\begin{equation}
%\begin{array}{c}
%G(A/B)=G(A[X_2]/A)\\
%|\\
%A\\
%|\\
%B
%\end{array}
%\end{equation}

\section{Ranks $k\leq 3$}
\label{cubisec}

The cases $k=1,2$ in Theorem~\ref{cubicase} follow from
Theorem~\ref{monocase}.  So we consider the case $k=3$ in this
section.  
%We start by considering the where $A$ is
%free over $B$ with basis of the form $1$, $x$, $y$.

\begin{theorem}\label{cubicase2}
  Assume that $A$ is free of rank $3$ over $B$ with basis $1$, $x$, $y$.
  Let $x_1$, $x_2$, $x_3$ and $y_1$,~$y_2$, $y_3$ 
  denote the images in $G(A/B)$ of the
  elements 
%  $x\otimes 1\otimes 1$, $1\otimes x\otimes 1$, $1\otimes
 % 1\otimes x\in A^{\otimes 3}$ 
$x^ {(1)}$, $x^{(2)}$, $x^{(3)}$  and $y^{(1)}$, $y^{(2)}$, $y^{(3)}\in A^{\otimes 3}$ 
  respectively.
%  , and define
 % $y_1,y_2,y_3$ similarly.  
 Then the ring $G(A/B)$ is free of rank $6$
  over $B$ with basis $1, x_1, y_1, x_2, y_2, x_1y_2.$
%In particular, if $B = \Z$, then $G(A/B)$ has no additive torsion.
\end{theorem}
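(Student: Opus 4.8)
The plan is to show directly that the six elements $1, x_1, y_1, x_2, y_2, x_1y_2$ both span $G(A/B)$ as a $B$-module and are $B$-linearly independent. Since by Theorem~\ref{monocase} (or Theorem~\ref{mon}) we cannot immediately invoke monogenicity, the strategy is to carefully exploit the defining relations~(\ref{fundrels}) for $n=3$. Writing $s_1, s_2, s_3$ for the (known) coefficients of the characteristic polynomials of elements of $A$, the relations for a single element $a\in A$ read $s_1(a) = a^{(1)}+a^{(2)}+a^{(3)}$, $s_2(a) = a^{(1)}a^{(2)}+a^{(1)}a^{(3)}+a^{(2)}a^{(3)}$, and $s_3(a) = a^{(1)}a^{(2)}a^{(3)}$; by the functoriality argument of Section~\ref{funcsec}, $I(A,B)$ is generated by these relations as $a$ ranges over the basis $1, x, y$ of $A$, and in fact (by polarization, taking $a = x+ty$ type substitutions, or equivalently the formulas for $s_j(x+y)$ recorded after Lemma~\ref{bart}) by the relations for $a\in\{x, y, x+y\}$. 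The relation for $a=1$ is trivial.

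First I would use the three relations coming from $a=x$ to rewrite $x_3$ and the symmetric functions of $x_1,x_2,x_3$ in terms of $\{x_1,x_2\}$: from $s_1(x) = x_1+x_2+x_3$ we get $x_3 = s_1(x)-x_1-x_2$; substituting into the $s_2(x)$ relation expresses $x_1x_2$ (hence, after multiplying, any monomial containing $x_1x_2$) in terms of lower-degree monomials in $x_1,x_2$ with coefficients in $B$; and the $s_3(x)$ relation $x_1x_2x_3 = s_3(x)$ becomes redundant or gives $x_1^2x_2$-type reductions. The upshot is that $B[x_1,x_2,x_3]/(\text{relations for }x)$ is free of rank $6$ over $B$ with basis $1, x_1, x_2, x_1^2, x_1x_2$—wait, more precisely it is the rank-$3!=6$ algebra $G(B[x]/P_x \,/\, B)$ by Theorem~\ref{mon}, with basis the monomials $x_1^{e_1}x_2^{e_2}x_3^{e_3}$, $0\le e_i<i$, i.e. $1, x_2, x_3, x_2x_3, x_3^2, \ldots$; after eliminating $x_3$ this becomes a basis involving only $x_1, x_2$. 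Doing the analogous reduction for $a=y$ handles all monomials in the $y_i$ alone. The remaining, and genuinely new, input is the relation for $a=x+y$, which, after subtracting the relations for $a=x$ and $a=y$, yields identities expressing the "mixed" symmetric combinations such as $x_1y_2+x_2y_1+x_1y_3+\cdots$ and $x_1x_2y_3+\cdots$ in terms of elements of $B$ and products already handled; these are precisely what is needed to rewrite every mixed monomial $x_1^{a}x_2^{b}y_1^{c}y_2^{d}$ (after using $x_3,y_3$ elimination) as a $B$-combination of $1, x_1, y_1, x_2, y_2, x_1y_2$. This is the spanning half.

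For linear independence, I would construct enough $B$-algebra homomorphisms $G(A/B)\to$ (some $B$-algebra) to separate the six basis elements, mirroring the $\phi_{(i_1,\ldots,i_n)}$ trick of Section~\ref{basisforbn}. Concretely, it suffices to do this after base change to a field (or after the universal base change to $B = \Z[\text{structure constants of }A]$, then to its fraction field $\bar K$), where $A\otimes \bar K$ is étale, hence $\cong \bar K^3$, so by Theorem~\ref{etalecase} $G(A/B)\otimes\bar K\cong \bar K^{6}$—free of rank $6$—and one checks the images of $1, x_1, y_1, x_2, y_2, x_1y_2$ form a $\bar K$-basis there by the explicit idempotent description; since freeness of rank $6$ over the universal $B$ is preserved and the six elements are a basis after a faithfully flat (indeed, dense) base change, they are a basis over the universal $B$, hence over any $B$ by specialization.

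The main obstacle I expect is the bookkeeping in the spanning step: verifying that the relations for $a\in\{x,y,x+y\}$ really do reduce \emph{every} monomial $x_1^{a}x_2^{b}x_3^{c}y_1^{d}y_2^{e}y_3^{f}$ to the span of the six claimed elements, and in particular that no higher mixed monomial (e.g. $x_1x_2y_1y_2$ or $x_1^2y_2$) survives. The degree-counting should work out—$G(A/B)$ ought to be generated in degree $\le 3$ in the $x_i,y_i$ and, after $x_3,y_3$-elimination, spanned by the $6$-element set—but confirming this requires a finite, somewhat delicate case analysis using the quadratic relations from $s_2$ and the cubic relation from $s_3$; this is presumably where the structure constants of $A$ enter nontrivially and where one must be careful not to implicitly assume $A$ is monogenic. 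Once spanning is established, independence follows cleanly from the étale base-change argument above.
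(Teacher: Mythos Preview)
Your approach matches the paper's: spanning via the relations for $a\in\{x,y,x+y\}$, then linear independence by passing to the universal base ring and its fraction field where the algebra becomes \'etale. Two simplifications in the paper dissolve your stated obstacle. First, the paper invokes the Gan--Gross--Savin normalization: after translating $x,y$ by suitable $B$-multiples of $1$, the multiplication table reads $xy=ad$, $x^2=-ac+bx+ay$, $y^2=-bd+dx+cy$ for some $a,b,c,d\in B$. Second---and this is the point you are missing---since $x_i^2$, $y_i^2$, $x_iy_i$ are already $B$-combinations of $1,x_i,y_i$ by the ring law of $A$ itself, once $x_3,y_3$ are eliminated via $s_1$, \emph{everything} in $A^{\otimes 3}$ is automatically spanned by the nine products $\{1,x_1,y_1\}\cdot\{1,x_2,y_2\}$; your worries about $x_1^2y_2$ or $x_1x_2y_1y_2$ evaporate, no $s_3$ relation is needed, and the three $s_2$ relations (for $x$, $y$, $x+y$) reduce nine to six in one line each. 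For independence the paper likewise passes to $B=\Z[a,b,c,d]$ and its fraction field $K$, but shows $A\otimes_B K$ is actually a \emph{field} and invokes Theorem~\ref{fieldcase}; your \'etale route via Theorem~\ref{etalecase} also works, though note that \'etale over a field does not give ``$\cong \bar K^3$''---that slip is harmless since Theorem~\ref{etalecase} only needs \'etaleness.
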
  

\begin{proof}
It is known that, by translating $x$ and $y$ by appropriate
$B$-multiples of 1, the multiplication table of $A$ as a ring
over $B$ can be expressed in the form
\begin{equation}\label{ringlaw3}
\begin{array}{cll}
xy &=& \,\,\,\,\;\!ad \\
x^2 &=& -ac + b x \;\!+ a y \\
y^2 &=& -bd\;\! + d x \;\!\!+ c y.
\end{array}
\end{equation}
for some elements $a,b,c,d\in B$ (see \cite[Prop.~4.2]{GGS}).

In terms of these elements, the characteristic
equations of $x$, $y$, and $x+y$ are given by
$$T^3- bT^2 + acT -a^2d=0,$$ $$T^3- cT^2 + bdT -ad^2 =0,$$
and
$$T^3- (b + c)T^2 +
(ac + bc +bd - 3ad)T - (
a^2d +ac^2 + b^2d + ad^2- 2abd - 2acd )=0
$$ 
respectively.

%In terms of these elements, the characteristic
%equations of $x$ and $y$ are given by
%$$T^3- bT^2 + acT -a^2d=0$$ and $$T^3- cT^2 + bdT -ad^2 =0$$ respectively.
%T^3- (bs + ct)T^2 
%+ (acs^2 + bcst - 3adst + bdt^2)T 
%-a^2ds^3 - ac^2s^2t + 2abds^2t - b^2dst^2 + 2acdst^2 - ad^2t^3 

Note first that the trace relations $x_1 + x_2 + x_3 = b$ and $y_1 + y_2 +
y_3 = c$ are equivalent to 
\begin{eqnarray}\label{x3}
x_3&=&b-x_1-x_2,\\  \label{y3}
y_3&=&c-y_1-y_2. 
\end{eqnarray}
%can be obtained from $x_1,x_2,y_1,y_2$. 
Hence $G(A/B)$ is generated as a
$B$-module by the 9 elements $\{1,x_1,y_1\}\cdot\{1,x_2,y_2\}$, and we
need to find 3 additive relations to relate $x_1x_2$, $x_1y_2$,
$x_2y_1$, $y_1y_2$.  

Since all other trace relations are $B$-linear combinations of the
trace relations for $x$ and $y$, they do not yield any further new
relations.  Instead, we now take the quadratic identities $x_1x_2 +
x_1x_3 + x_2x_3 = ac$, $y_1y_2 + y_1y_3 + y_2y_3 = bd$, and
$(x_1+y_1)(x_2+y_2) + (x_1+y_1)(x_3+y_3) + (x_2+y_2)(x_3+y_3)
=ac + bc +bd - 3ad$, which reduce to:
\begin{eqnarray}\label{x1x2}
x_1x_2&=&a(c-y_1-y_2),\\  y_1y_2&=&d(b-x_1-x_2) \label{y1y2} \\ 
y_1 x_2 &=& bc -ad - b(c-y_1-y_2) - c(b- x_1-x_2)-x_1 y_2. \label{y1x2}
\end{eqnarray}
These identities show that $G(A/B)$ is spanned over $B$ by the six elements
claimed in the theorem.

It remains to show that these six elements are in fact linearly
independent.  By Theorem~\ref{thm:functorial}, it suffices to consider the case when
$B=\Z[a,b,c,d]$ is a free polynomial ring over $\Z$ in variables $a,b,c,d$, and
$A$ is free of rank 3 over $B$ with basis 1, $x$, $y$, and
multiplication table given by (\ref{ringlaw3}).  

In that case, let $K$ be the quotient field of $B$.  If the six
elements 1, $x_1$, $x_2$, $y_1$, $y_2$, $x_1y_2$ satisfy a linear
relation over $B$, then they also satisfy a linear relation over $K$.
We show that this is not the case.  Since $\Disc((A\otimes_B K)/K)$ is a
non-zero polynomial in $a$, $b$, $c$, and $d$, it is invertible in $K$
and hence $(A\otimes_B K)/K$ is \'etale.  In fact, $A\otimes_B K$ is a
field.  If it were not, then the cubic polynomial $f(x)$ defining the
extension $(A\otimes_B K)/K$ would have a root in $K$.  As $A/B$ is the
universal based cubic ring extension having first basis element 1,
%(via specialization of the variables $a,b,c,d$), 
this would imply that every cubic polynomial over
$\Q$ has a rational root, which is not true.

Now, by Theorem \ref{thm:functorial}, the elements 1, $x_1$, $x_2$, $y_1$, $y_2$,
$x_1y_2$ also span $G((A\otimes_B K) / K)\cong G(A/B)\otimes_B K$.
Since $A\otimes_B K$ is a field, Theorem~\ref{fieldcase} implies that
$G((A\otimes_B K) / K)$ is a $6$-dimensional vector space over $K$.  It
follows that 1, $x_1$, $x_2$, $y_1$, $y_2$, and $x_1y_2$ are linearly
independent over $K$, and hence over $B$, as desired.
\end{proof}

Thus to {any} cubic ring $A$ over $B$ with basis $1$, $x$, $y$, there
is naturally associated a canonical sextic ring $\tilde A$ over $B$, 
given by $G(A/B)$.  
We show that in fact we have the formula
\begin{equation}\label{discid2}
\Disc(\tilde A)=\Disc(A)^3\end{equation}
as discriminants of based rings.

To see this, it again suffices to
check this in the case that the base ring $B$ is $\Z[a,b,c,d]$.  In this
case, it is clear that the multiplication table for
$\tilde A$, in terms of our chosen basis $1$, $x_1$, $x_2$, $y_1$,
$y_2$, $x_1y_2$ for $\tilde A$, will involve only polynomials in
$a,b,c,d$ with coefficients in $\Z$.  Thus the discriminant
$\Disc(\tilde A)$ of $\tilde A$ will also be an integer polynomial in
$a,b,c,d$.  Furthermore, this polynomial $\Disc(\tilde A)$ must remain
invariant under changes of the basis $x,y$ via transformations in $\GL_2(\Z)$, 
which changes $a,b,c,d$ by the action of $\GL_2(\Z)$ on the binary
cubic form $f(x,y)=ax^3+bx^2y+cxy^2+dy^3$ (by \cite[Prop.~4.2]{GGS}).  
It is known (see, e.g.,
\cite[Lec.~XVII]{Hilbert}) that the only
$\GL_2(\Z)$-invariant polynomials in $a,b,c,d$ under this action must
be polynonomials in $\Disc(f)=\Disc(A)$, and thus $\Disc(\tilde A)$
must be a polynomial in $\Disc(A)$.  

To determine this polynomial, we may then restrict to the case where
$a=1$ or $d=1$; that is, we may assume 
the rank 3 ring $A$ is monogenic over $B$, in which case $\Disc(\tilde
A)=\Disc(A)^3$ by (\ref{discid}).  Formula (\ref{discid2}) 
therefore follows for general rank 3 rings $A$ over $B$ that have a basis of the form $1,x,y$. 

\vspace{.1in}
In particular, if $A$ is a cubic order in a noncyclic cubic field $K$,
then $G(A/\Z)$ provides a canonically associated sextic order $\tilde A$
in the Galois closure $\tilde K$ satisfying $\Disc(\tilde
A)=\Disc(A)^3$. 

\vspace{.1in} We may now deduce the more general
Theorem~\ref{cubicase} from Theorem~\ref{cubicase2}.  Indeed, let $A$
be any locally free ring of rank 3 over $B$.  Then it follows
from~\cite[Lemma~1.1]{GL} that, for any maximal ideal $M$ of $B$, the
localization $A_M$ is free of rank 3 over $B_M$ {\it with a basis of
  the form $1$, $x$, $y$} (essentially an application of Nakayama's
Lemma).  We conclude then, by Theorem~\ref{cubicase2}, that the
localization $G(A/B)_M$ is
free of rank~6 over $B_M$, for all maximal ideals $M$ of $B$.

Since $A$ is finitely presented as a $B$-module (being locally
free; see Footnote~2 on p.\ 1) and the ideal $I(A,B)$ is finitely
generated (a set of generators being the relations (\ref{fundrels}),
where $a$ ranges over a spanning set for $A$ over $B$; see Section~2),
we conclude that $G(A/B)$ too is finitely presented as a $B$-module.  

Finally, since $G(A/B)$ is finitely presented as a $B$-module, and the
localization $G(A/B)_M$ is free of rank $6$ over $B_M$ for all maximal
ideals $M$ of $B$, by Footnote 2 on p.\,1, we conclude that $G(A/B)$
is locally free of rank 6 over $B$, and Theorem~\ref{cubicase} follows.

\section{Behavior under products}\label{directsumsec}

%\begin{lemma}\label{nonzero}
%If $A$ is locally free of rank $n\geq 1$ over $B$, then $G(A/B)$ is not the zero ring.
%\end{lemma}

%Now s
Suppose $A_1,\ldots,A_k$ are locally free rings of rank $n_1,\ldots,n_k$, respectively, over $B$.  Then $A=A_1\times\cdots\times A_k$ is locally free of rank $n=n_1+\cdots +n_k$ over $B$.   To 
 prove Theorem~\ref{directsum}, we wish to understand
$G(A/B)$ in terms of $G(A_j/B)$ for $j\in\{1,\ldots,k\}$.

To this end, let $[k]$ denote the set $\{1,\ldots,k\}$.  For each $j\in [k]$, we define $e_j\in A$ by $e_1=(1,0,\ldots,0)$, $e_2=(0,1,0,\ldots,0)$, and so on.  For an $n$-tuple $i=(i_1,\ldots,i_n)\in[k]^n$, define $T_i\subset A^{\otimes n}$ to be the $B$-subalgebra generated by all pure tensors $a_1^{(1)}a_2^{(2)}\cdots a_n^{(n)}\in A^{\otimes n}$, where $a_m\in A_{i_m}e_{i_m}$ for all $m\in\{1,\ldots,n\}$ (so $T_i\cong
A_{i_1}\otimes\cdots\otimes A_{i_n}$ as $B$-algebras).  Then we have the decomposition
\begin{equation}\label{decompAn}
 A^{\otimes n} = \prod_{i\in [k]^n} T_i
\end{equation}
as $B$-algebras, where the $T_i$ factor in (\ref{decompAn}) corresponds to the idempotent
$e_i:=e_{i_1}^{(1)}e_{i_2}^{(2)}\!\cdots e_{i_n}^{(n)}\in A^{\otimes n}$.
% so that $T_i=e_iA^{\otimes n}$.
We also then have a corresponding decomposition
\begin{equation}\label{decompI}
I(A,B)= \prod_{i\in [k]^n} I_i(A,B),
%\bigoplus_{i\in [k]^n} I_i(A,B),
\end{equation}
where $I_i(A,B)=e_iI(A,B)$ is an ideal of the $B$-algebra $T_i$.  

Let $S$ denote the subset of all $n$-tuples $i=(i_1,\dots,i_n)\in[k]^n$ such that $n_1$ of the $i_m$'s are equal to 1, and $n_2$ of the $i_m$'s are equal to 2, etc.  (Thus, for any $i\in S$, we have an isomorphism $T_i\cong 
A_1^{\otimes n_1}\otimes \cdots \otimes A_k^{\otimes n_k}$ as $B$-algebras, obtained by appropriately permuting the tensor factors.)
For any $n$-tuple $i\notin S$, we claim that $I_i(A,B)=T_i$.  Indeed, for such an $i\notin S$, let $j\in[k]$ be an element that appears fewer than $n_j$ times as an entry in $i$.  Then since $e_j$ has characteristic polynomial $x^{n-n_j}(x-1)^{n_j}$, we have the relation 
\begin{equation}\label{iabrel}
1-\sum_{1\leq r_1< \cdots < r_{n_j}\leq n} e_j^{(r_1)}e_j^{(r_2)}\cdots e_j^{(r_{n_j})}   \in I(A/B).
\end{equation}
Multiplying (\ref{iabrel}) by any element of $T_i$, and noting that any pure tensor in $T_i$ has fewer than $n_j$ tensor factors in $A_je_j$, yields
\begin{equation}\label{iabrel2}
T_i \subseteq I_i(A,B)
\end{equation}
and so $T_i=I_i(A,B)$ as claimed.

%in $I(A,B)$, when multiplied wit
%define $R_i\subset G(A/B)$ as the image of $T_i$ in $G(A/B)$.  Then we claim that there is a natural decomposition
%\begin{equation}\label{decompGAB}
%G(A/B) = \bigoplus_{i\in S} R_i
%\end{equation}
%as $B$-algebras and, moreover, we have $R_i\cong G(A_1/B)\otimes\cdots\otimes G(A_k/B)$ for all $i\in S$.
%%, implying Theorem~\ref{directsum}.

%To see this, we first note that any general element of the form $a_1^{(1)}a_2^{(2)}\cdots a_n^{(n)}\in T_i$, such that $i\notin S$, is in fact in $I(A/B)$ and thus maps to zero in $G(A/B)$.   Indeed, let $j\in[k]$ be an element that appears fewer than $n_j$ times as an entry in $i$. 
%Then since $e_j$ has characteristic polynomial $x^{n-n_j}(x-1)^{n_j}$, we have the relation 
%$$1-\sum_{1\leq r_1< \cdots < r_{n_j}\leq n} e_j^{(r_1)}e_j^{(r_2)}\cdots e_j^{(r_{n_j})}   \in I(A/B).$$
%Multiplying by $a=a_1^{(1)}a_2^{(2)}\cdots a_n^{(n)}\in T_i$, we obtain 
%$$a\Bigl(1-\sum_{1\leq r_1< \cdots < r_{n_j}\leq n} e_j^{(r_1)}e_j^{(r_2)}\cdots e_j^{(r_{n_j})} \Bigr)\; =\; a \; \in\; I(A/B)$$
%%(since $a\in T_i$ implies that fewer than  $n_j$ of the $a_m$'s lie in $A_je_j$, so 
%since $a$ multiplied with any term in the above sum is zero.  We conclude that $a\in T_i$ maps to 0 in $G(A/B)$ if $i\notin S$, as claimed.

%Now if we write $A^{\otimes n}=\times_{i\in[k]^n} T_i$, then $I(A,B)=\times_{i\in[k]^n} I_i(A,B)$, where for each $i\in [k]^n$ we have that $I_i(A,B)$ is an ideal of the $B$-algebra $T_i$.  We have already seen that $I_i(A,B)=T_i$ if $i\notin S$.  
To determine $I_i(A,B)$ when $i\in S$, it suffices by symmetry (via the $S_n$-action on $A^{\otimes n}$) to consider the case $i=(i_1,i_2,\ldots,i_n)$, where $i_1=\cdots=i_{n_1}=1$, $i_{n_1+1}=\cdots=i_{n_1+n_2}=2$, 
 etc.
 %$\ldots\,$, $i_{n_1+\cdots+n_{k-1}+1}=\cdots=i_{n_1+\cdots+n_k}=k$. 
 To obtain generators for $I_i(A,B)$ for this particular $i\in S$,  
we take generators of $I(A,B)$ and project them onto $T_i$ by multiplying them by the idempotent 
$e_i.$  A natural generating set for $I(A,B)$ (by the work of Section~2)
is the set of all elements of the form 
\begin{equation}\label{fundrelss}
s_m(a)\;-\sum_{1\leq r_1< r_2< \cdots< r_m\leq n}
a^{(r_1)}a^{(r_2)}\cdots a^{(r_m)},\end{equation}
where $a=a_je_j$ for some $a_j\in A_j$ and $j\in[k]$.  If $a=a_je_j\in A_je_j$ is such an element, then multiplying the element (\ref{fundrelss}) in $I(A,B)$ by the idempotent $e_i=e_{i_1}^{(1)}e_{i_2}^{(2)}\!\cdots e_{i_n}^{(n)}\in T_i$, we obtain 0 unless $m\leq n_j$, in which case we obtain 
\begin{equation}\label{projfundrelss}
s_m(a_j) - \sum_{n_1+\cdots+n_{j-1}+1\leq r_1< r_2< \ldots< r_m\leq n_1+\cdots+n_j}
a^{(r_1)}a^{(r_2)}\cdots a^{(r_m)}.
\end{equation}
Thus the ideal in $T_i$ generated by all such elements is simply $I(A_1,B)\otimes\cdots\otimes I(A_k,B)\subset T_i = A_1^{\otimes n_1}\otimes\cdots\otimes A_k^{\otimes n_k}$.  It follows that $T_i/I_i(A,B)\cong G(A_1/B)\otimes\cdots\otimes G(A_k/B)$ for this particular $i$, and thus for any $i\in S$.

Therefore, if for each $i\in S$ we write $R_i:=T_i/I_i(A,B)$, then we have shown that
\begin{equation}\label{decompGAB}
G(A/B) = \prod_{i\in S} R_i
\end{equation}
as $B$-algebras and moreover, $R_i\cong G(A_1/B)\otimes\cdots\otimes G(A_k/B)$ for all $i\in S$, yielding Theorem~\ref{directsum}.

Note that the proof shows that the $S_n$-action on the $B$-module $G(A_1\times\cdots\times A_k/B)$ is induced from the $S_{n_1}\times\cdots\times S_{n_k}$-action on $G(A_1/B)\otimes \cdots\otimes G(A_k/B)$; i.e., we have
$$G(A_1\times\cdots\times A_k/B)\; \cong \; B[S_n]\otimes_{B[S_{n_1}\times\cdots\times S_{n_k}]}
\bigl[G(A_1/B)\otimes \cdots\otimes G(A_k/B)\bigr]$$ as $B[S_n]$-modules.

\section{An example of a ring of rank $4$ whose $S_4$-closure has rank
  $32>4!$}\label{degexample}
  
%Let $B$ be any base ring, and let $A$ be the ring $B[x,y,z]/(x,y,z)^2$
%of rank 4 over $B$.  We claim then that $G(A/B)$ has rank 32 over $B$.
In this section, we give an example of a ring $A$ of rank $4$ over
$B$---namely $A=B[x,y,z]/(x,y,z)^2$---such that $G(A/B)$ has rank
$32>4!$ over $B$.  
%This is contained in the following proposition;
%moreover, the proof 
%provides an explicit construction of $G(A/B)$.

\begin{proposition}
\label{prop:rnk32}
Let $B$ be a ring, and let $A$ be the 
ring $B[x,y,z]/(x,y,z)^2$ having rank $4$ over $B$.  
Then $G(A/B)$ is free of rank $32$ over $B$.
%The ring $A=B[x,y,z]/(x,y,z)^2$ of rank $4$ over $B$
\end{proposition}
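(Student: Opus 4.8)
The plan is to reduce to $B=\Z$ by functoriality, write down an explicit presentation of $G(A_\Z/\Z)$ where $A_\Z=\Z[x,y,z]/(x,y,z)^2$, and then compute it one graded piece at a time. Since $A=A_\Z\otimes_\Z B$, Theorem~\ref{thm:functorial} gives $G(A/B)\cong G(A_\Z/\Z)\otimes_\Z B$, so it suffices to prove $G(A_\Z/\Z)\cong\Z^{32}$ as a $\Z$-module. For this I will check (i) that $G(A_\Z/\Z)$ is generated by $32$ elements over $\Z$, and (ii) that $\dim_\Q\big(G(A_\Z/\Z)\otimes_\Z\Q\big)=32$ (note $G(A_\Z/\Z)\otimes_\Z\Q\cong G(A_\Q/\Q)$, again by Theorem~\ref{thm:functorial}). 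Given (i) and (ii), any surjection $\Z^{32}\twoheadrightarrow G(A_\Z/\Z)$ has kernel that is torsion (it dies after $\otimes_\Z\Q$) and torsion-free (a subgroup of $\Z^{32}$), hence zero.

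For the presentation, Section~\ref{funcsec} shows that $I(A_\Z,\Z)$ is generated by the relations~(\ref{fundrels}) with $a$ ranging over the basis $1,x,y,z$ of $A_\Z$. The relations for $a=1$ hold identically, and each of $\times x$, $\times y$, $\times z$ on $A_\Z$ is nilpotent of square $0$, so all three characteristic polynomials equal $T^4$ and every $s_j(x)$, $s_j(y)$, $s_j(z)$ vanishes. Writing $x_i,y_i,z_i$ for the images of $x,y,z$ in the $i$th tensor factor ($1\le i\le4$), and $e_j(x_\bullet)$ for the $j$th elementary symmetric polynomial in $x_1,\dots,x_4$ (and similarly with $y$ or $z$), this identifies $G(A_\Z/\Z)$ with $\Z[x_i,y_i,z_i:1\le i\le4]/J$, where $J$ is generated by $x_i^2$, $y_i^2$, $z_i^2$, $x_iy_i$, $y_iz_i$, $z_ix_i$ (for $1\le i\le4$) together with $e_j(x_\bullet)$, $e_j(y_\bullet)$, $e_j(z_\bullet)$ (for $1\le j\le4$). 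This ring is graded by total degree, with pieces $G_0,\dots,G_4$ (it is concentrated in degrees $0$ through $4$, since each tensor factor contributes degree at most $1$), and is further graded within each degree by the multiset of directions $x,y,z$ occurring in a monomial; all generators of $J$ are homogeneous for both gradings, so both decompositions pass to $G(A_\Z/\Z)$.

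Now I would compute the pieces. Clearly $G_0=\Z$ has rank $1$, and $G_1$ has rank $9$ (the twelve monomials $x_i,y_i,z_i$ modulo the three relations $e_1(x_\bullet)$, $e_1(y_\bullet)$, $e_1(z_\bullet)$). The piece $G_4$ vanishes: given a degree-$4$ monomial, with one direction per tensor factor, multiplying the submonomial on factors $1,2,3$ by the relation $e_1$ of the fourth factor's direction yields exactly this monomial modulo $J$, since all other terms of that product repeat a tensor factor and vanish. For $G_2$, one works over $\Q$, type by type: each single-direction type ($xx$, $yy$, $zz$; six monomials such as $x_ix_j$, modulo $e_2(x_\bullet)$ and the four $x_m\,e_1(x_\bullet)$) contributes $2$, and each two-direction type ($xy$, $xz$, $yz$; twelve monomials $x_iy_j$ with $i\ne j$, viewed as a $4\times4$ array off the diagonal and cut down by the eight row- and column-sum relations $y_m\,e_1(x_\bullet)$, $x_m\,e_1(y_\bullet)$) contributes $5$; so $\dim_\Q G_2=3\cdot2+3\cdot5=21$. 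For $G_3$, the single-direction and two-direction content types collapse to $0$ over $\Q$ (indeed over $\Z$) after short computations, and the three-direction type $xyz$ is the crux: its twenty-four monomials $x_iy_jz_k$ biject with the elements of $S_4$ (recording which factor receives label $x$, $y$, $z$, or ``none''), and the only relations among them are the $w_iw'_j\,e_1(v_\bullet)$ with $\{w,w',v\}=\{x,y,z\}$, each of which is the sum of exactly two of these monomials whose associated permutations differ by a transposition, hence have opposite sign; since $\sigma\mapsto\sgn(\sigma)$ is then a consistent $\pm1$-labeling of this (connected) pairing graph, the type is exactly one-dimensional, so $\dim_\Q G_3=1$. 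Summing, $\dim_\Q G(A_\Q/\Q)=1+9+21+1+0=32$, while the reductions just made express every monomial of $A_\Z^{\otimes4}$ modulo $J$ in terms of an explicit $32$-element set; so (i) and (ii) hold, and by the first paragraph $G(A_\Z/\Z)\cong\Z^{32}$, completing the proof.

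I expect the main obstacle to be the degree-$2$ and degree-$3$ bookkeeping, and in particular the verification that $G_3$ is one-dimensional rather than zero. That surviving line is precisely what forces the rank past $4!=24$, and its survival is exactly the compatibility of the sign character of $S_4$ with the pairing relations; everything else is routine linear algebra over $\Q$ (or $\Z$), organized along the two gradings.
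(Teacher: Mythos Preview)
Your approach is correct and, at its core, is the paper's: both arguments rest on the very same bigraded decomposition of $G(A/B)$ by total degree and by ``direction multiset'' in $\{x,y,z\}$, and your pieces $G_1^{x}$, $G_2^{xx}$, $G_2^{xy}$, $G_3^{xyz}$ are exactly what the paper calls $T(x)$, $U(x)$, $V(x,y)$, $W(x,y,z)$, with the same rank computations and the same vanishing of all other pieces. The only packaging difference is that the paper works directly over an arbitrary base $B$ by writing down a model ring $R$ with these components and verifying $G(A/B)\cong R$, whereas you first base-change to $\Z$ via Theorem~\ref{thm:functorial} and then pair a $32$-element $\Z$-generating set with a $\Q$-dimension count; both routes are equally clean. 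One small point to make explicit: your claim (i) requires that the $G_2$ reductions, which you phrase ``over $\Q$'', actually produce integer expressions for the remaining monomials in terms of your chosen generators --- they do (e.g.\ in the $xx$-piece one gets $x_3x_4=x_1x_2$, $x_2x_4=x_1x_3$, $x_1x_4=-x_1x_2-x_1x_3$ from $e_2(x_\bullet)$ and the $x_m e_1(x_\bullet)$ without ever dividing, and the $xy$-piece is similar), but this should be said rather than inferred from the $\Q$-count.
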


\begin{proof}
Motivated by the relations (\ref{fundrels}) for $G(A/B)$, we give a
direct construction of
a ring $R$ over $B$, which we will then show to be naturally 
isomorphic to $G(A/B)$.
Precisely, we construct $R$ to have a $B$-module decomposition of the form
\begin{equation}\label{abcdecomp}
R= B\oplus [T(x) \oplus T(y) \oplus T(z)] \oplus
[U(x) \oplus U(y) \oplus U(z)] \oplus
[V(x,y) \oplus V(y,z) \oplus V(x,z)] \oplus W(x,y,z),
\end{equation}
where $T(\cdot)$, $U(\cdot)$, $V(\cdot,\cdot)$, and
$W(x,y,z)$ are free $B$-modules having ranks 3, 2, 5, and 1,
respectively.  Therefore, 
$R$ (and thus $G(A/B)$) 
will have $B$-rank $1+3\cdot 3+3\cdot 2 + 3\cdot 5 + 1 = 32$.
%We construct $R$ via the $B$-modules $T(\cdot)$, $U(\cdot)$,
%$V(\cdot,\cdot)$, and $W(\cdot,\cdot,\cdot)$.  
The constructions of these $B$-modules 
$T(\cdot)$, $U(\cdot)$,
$V(\cdot,\cdot)$, and $W(\cdot,\cdot,\cdot)$ 
are as follows.  

First, $T(x)$ is the $B$-module spanned by $x_1,x_2,x_3,x_4$, modulo
the relation $x_1+x_2+x_3+x_4=0$; $T(y)$ and $T(z)$ are defined
similarly, and hence each is three-dimensional.

Second, $U(x)$ is defined as the symmetric square of $T(x)$, modulo
the relations
$$x_1^2=x_2^2=x_3^2=x_4^2=x_1x_2+x_1x_3+x_1x_4+x_2x_3+x_2x_4+x_3x_4=0.$$
Now $x_1+x_2+x_3+x_4=0$ in $T(x)$, so multiplying 
by $x_1$ and $x_2$ respectively shows that
\[
x_1x_2+x_1x_3+x_1x_4=0 \quad\textrm{and}\quad x_1x_2+x_2x_3+x_2x_4=0
\]
respectively in $U(x)$; this in turn implies that 
\[
x_2x_3+x_3x_4+x_2x_4=0 \quad\textrm{and}\quad x_1x_3+x_3x_4+x_1x_4=0
\]
in $U(x)$.  Subtracting the first and last of the
latter four relations gives $x_1x_2=x_3x_4$, and similarly we have
$x_1x_3=x_2x_4$ and $x_1x_4=x_2x_3$.  We thus find that $U(x)$ is spanned
over $B$ by the images of any two of 
the three nonzero elements $x_1x_2$ (or $x_3x_4$),
$x_1x_3$ (or $x_2x_4$), and $x_1x_4$ (or $x_2x_3$), and any two of these are independent over $B$.  The $B$-modules $U(y)$ and
$U(z)$ are defined in the analogous manner, and are thus 
also two-dimensional.

Third, $V(x,y)$ is defined as the product $T(x) \otimes T(y)$, modulo the
relations 
\[
x_1 y_1=x_2 y_2=x_3 y_3=x_4 y_4=0
\]
(where we have suppressed the tensor symbols).  As $T(x)\otimes T(y)$ is a rank 9
module over $B$, we see that $V(x,y)$ is five-dimensional.  The
$B$-modules $V(y,z)$ and $V(x,z)$ are defined analogously, and hence are
also five-dimensional.

Finally, $W(x,y,z)$ is the space $T(x)\otimes T(y)\otimes T(z)$
modulo the relations 
\[
x_iy_iz_j=x_jy_iz_i=x_iy_jz_i=0
\]
for all $i$ and $j$, and the further relations
$$x_iy_jz_k=\sgn(i,j,k)x_1y_2z_3$$ 
for all permutations
$(i,j,k)$ of $(1,2,3)$,  We have imposed the latter relations 
in $W(x,y,z)$ because we have the relations
\[0=(x_4y_4)z_3=(-x_1-x_2-x_3)(-y_1-y_2-y_3)z_3=x_1y_2z_3+x_2y_1z_3,
\]
 in $I(A,B)$, implying $x_2y_1z_3=-x_1y_2z_3$, etc.  With these relations, we see
that the rank of $W(x,y,z)$ over $B$ is 1, and is spanned over $B$ by
$x_1y_2z_3$.

We have not defined any $B$-module components in $R$ involving
quadruple products of $x_i,y_j,z_k$ ($1\leq i,j,k\leq 3$) because
these we would like to be zero due to the relations
$x_iy_i=x_iz_i=y_iz_i=0$ in $A$.  Similarly, there are no $B$-module
components involving triple products of only $x_i$ and $y_j$ ($1\leq
i,j,k\leq 3$), since the analogues of 
such products in $G(A/B)$ would be zero:
\[0=x_1(y_1y_2+y_2y_3+y_1y_3)=x_1y_2y_3,\] and similarly all such
triple products would be zero in $G(A/B)$.  Thus we keep only those
components $T(\cdot)$, $U(\cdot)$, $V(\cdot,\cdot)$, and
$W(\cdot,\cdot,\cdot)$ appearing in (\ref{abcdecomp}).

%We must now check that $R$ is a ring.  This is easy, as 
%multiplication in 
The product structure on $R$ is defined simply in terms of 
the natural maps $T(x)\otimes
T(x)\to U(x)$, $T(x)\otimes T(y)\to V(x,y)$, $T(x)\otimes T(y)\otimes
T(z)\to W(x,y,z)$, $T(x)\otimes V(y,z)\to W(x,y,z)$, and so on.  All
other products (such as $T(x)\otimes V(x,y)$) are defined to be zero.
With this product structure, it is immediate that $R$ is a ring.  

To see that $G(A/B)\cong R$, we note that there is a natural surjective map
\[ A \otimes A\otimes A\otimes A\to R, \] sending $x^{(i)}\mapsto x_i$, $y^{(i)}\mapsto
y_i$, and $z^{(i)}\mapsto z_i$.  Furthermore, the kernel of this map is,
by design, contained in $I(A,B)$.  To see that it contains $I(A,B)$,
one may simply check that it contains the elements (\ref{fundrels}) on
a basis of $A$ over $B$, and so on the basis elements $1$ (trivial),
$x$, $y$, and $z$.  By symmetry of $x$, $y$, and $z$, we then only
need to check that the elements (\ref{fundrels}) are in the kernel for
$a=x$ and $j=1,2,3$, and this is again immediate.

We conclude that $G(A/B)\cong R$ has rank 32 over $B$.
\end{proof}

\section{Why do we need to allow the rank of $S_n$-closures to exceed
  $n!$\,\,?}\label{orderexample}

It is natural to ask if it is possible to enlarge the ideal $I(A,B)$ defined in (\ref{fundrels}) to an ideal $I'(A,B)$ such that: i) for any ring $A$ that is locally free of rank $n$ over a ring $B$,  the quotient $G'(A/B):=A^{\otimes n}/I'(A,B)$ is always locally free of rank $n!$ over $B$; and ii) the construction $G'(A/B)$ commutes with base change.  As the following theorem shows, the answer is no:

\begin{theorem}\label{newth}
Let $n\geq4$.  As $B$ ranges over all rings and 
%$A$ ranges over all $B$-algebras of rank~$n$, 
$A$ ranges over all rings that are locally free of rank~$n$ over $B$, 
there cannot exist corresponding rings $G'(A/B)$ that are locally free of rank~$n!$ over $B$ and $B$-algebra maps $i_1,\dots,i_n:A\to G'(A/B)$ such that:
\begin{itemize}
\item[{\rm (a)}]
for every $A$ and $B$, the images of $i_1,\dots,i_n$ generate $G'(A/B)$ as a $B$-algebra;
\item[{\rm (b)}]
$G'(A/B)$ and $i_1,\dots,i_n$ are functorial in $A$; 
\item[{\rm (c)}]
$G'(A/B)$ and $i_1,\dots,i_n$ respect base change, i.e., for all $B$-algebras~$C$, there is a functorial choice of isomorphism $G'(A/B)\otimes_B C
\xrightarrow{\sim}
%{\overset{\sim}{\rightarrow}} 
G'((A\otimes_B C)/C)$ which commute with the $i_j$;~and
\item[{\rm (d)}]
if $A/B$ is an $S_n$-extension of fields of degree $n$, then $G'(A/B)$ is isomorphic to the usual Galois closure of $A$ over $B$, and the $i_j$ correspond to the $n$ distinct embeddings of $A$.
\end{itemize}
\end{theorem}

\begin{remark}{\em 
Note that $G(A/B)$ satisfies all the essential properties (a)--(d) of the theorem (the map $i_j$ corresponds to $a\mapsto a^{(j)}$) but, as we have already seen, it does not always have rank $n!$ over $B$.  In the usual Galois closure $\tilde A$ of an $S_n$-extension $A/B$ of fields of
degree $n$, the maps $i_j$ correspond to the embeddings $A\hookrightarrow \tilde A$, whose images generate $\tilde A$ as an algebra over $B$.
}\end{remark}

\begin{prooft2}
%We begin by showing that any such construction of $G'(A/B)$ must have the property that $G'(A/B)$ is a quotient of $G(A/B)$.  Indeed, the maps $i_1,\dots,i_n:A\to G'(A/B)$ determine an $R$-algebra map $\pi':A^{\otimes n}\to G'(A/B)$.  By property $(a)$, the images of the $i_j$ generate $G'(A/B)$ as a ring, and so $\pi'$ is surjective.  Let $\pi:A^{\otimes n}\to G(A/B)$ denote the natural surjection.  We construct a morphism $\varphi:G(R/Z)\to G'(R/\Z)$ such that $\pi'=\varphi\circ\pi$.
%The purpose of this section is to prove Theorem \ref{newth}.  WHY DOES IT SUFFICE TO TREAT THE CASE $n=4$?  I'D LIKE TO SAY if $R/\Z$ is our rank 4 counter-example, then $R\oplus\Z^{n-4}$ is our rank $n$ counter-example??
%Let $p$ be any prime, 
We consider first the case $n=4$.  
Let $K$ be an $S_4$-quartic extension of $\Q$, and let $\O_K$ denote the ring of integers of $K$.  Fix a prime $p\geq 5$, and let $R$ be the ring $\Z+p\O_K$.
% be an order in $K$ such that 
Then $\bar{R}:=R\otimes_{\Z}\F_p\cong \F_p[x,y,z]/(x,y,z)^2$.  

We begin by showing that $G'(R/\Z)$ is a quotient of $G(R/\Z)$.  The maps $i_1,\dots,i_4:R\to G'(R/\Z)$ determine an $R$-algebra map $\pi':R^{\otimes 4}\to G'(R/\Z)$.  By property (a), the images of the $i_j$ generate $G(R/\Z)$ as a ring, and so $\pi'$ is surjective.  Now $R^{\otimes 4}\subset K^{\otimes 4}$, and since $G'(R/\Z)$ is free of rank~24 over~$\Z$, by property (c) it is a full rank $\Z$-submodule of $G'(K/\Q)$.  Furthermore, $G'(K/\Q)$ is isomorphic to $G(K/\Q)$ by Theorem~2 and property (d).   We thus see that 
$$I(R/\Z)\subset R^{\otimes4}\cap I(K/\Q) =R^{\otimes4}\cap\ker(K^{\otimes 4}\to G'(K/\Q))\subset \ker(\pi')$$
and so the map $\pi'$ factors through $R^{\otimes 4}/I(R,\Z) = G(R,\Z)$, as claimed.  

\begin{comment}
We begin with the case $n=4$.  
Let $\pi:R^{\otimes 4}\to G(R/\Z)$ denote the natural surjection.  
We construct a morphism $\varphi:G(R/\Z)\to G'(R/\Z)$ such that $\pi'=\varphi\circ\pi$.

By Theorem \ref{thm:functorial} and property $(c)$, we have $G(R/\Z)\otimes\Q\cong G(K/\Q)$ and $G'(R/\Z)\otimes\Q\cong G'(K/\Q)$.  By Theorem \ref{fieldcase} and property $(d)$, there exists an isomorphism $\varphi'$ which makes the diagram
\[
\xymatrix{
K^{\otimes 4}\ar_-{\pi\otimes\id}[d]\ar^-{\pi'\otimes\id}[dr] & \\
G(K/\Q)\ar^-{\varphi'}[r] & G'(K/\Q)
}
\]
commute.  Since $G'(R/\Z)$ is free, to define $\varphi$ it is enough to show that the image of $G(R/\Z)$ under $\varphi'$ is contained in the lattice $G'(R/\Z)$.  Since $\pi$ is surjective, this follows from the commutativity of the diagram
\[
\xymatrix{
R^{\otimes 4}\ar^-{\pi}[r]\ar[d] & G'(R/\Z)\ar[d]\\
K^{\otimes 4}\ar^-{\pi\otimes\id}[r] & G'(K/\Q)
}
\]
This shows the existence of $\varphi$, and hence that $G'(R/\Z)$ is a quotient of $G(R/\Z)$.
\end{comment}
By Theorem \ref{thm:functorial} and Proposition \ref{prop:rnk32}, the $\F_p$-algebra $G(R/\Z)\otimes\F_p$ is isomorphic to $G(\bar{R}/\F_p)$ and so has rank $32$ as an $\F_p$-module.  Now, by functoriality of the $S_4$-closure, the $\F_p$-module $G(\bar{R}/\F_p)$ is naturally a representation of the group $\Aut_{\F_p}(\bar{R})=\GL_3(\F_p)$, and also of $S_4$, over $\F_p$, and hence (since these actions commute) of the group $\Gamma=S_4\times \GL_3(\F_p)$.  Thus, by properties (b) and~(c), we see that $G'(\bar{R}/\F_p)\cong G'(R/\Z)\otimes \F_p$ is a $\Gamma$-equivariant quotient of $G(\bar{R}/\F_p)\cong G(R/\Z)\otimes\F_p$.

We use $\underline{\triv}$ and $\underline{\std}$ to denote the
trivial representation and the standard three-dimensional
representation of $\GL_3(\F_p)$, respectively.  Also, we write $\triv$,
$\sgn$, $\std$, $\std'$, and $\std_2$ to denote the trivial,
sign, standard, standard $\otimes$ sign, and 2-dimensional
$S_3$-standard representation of $S_4$, respectively.  These representations are
irreducible over $\F_p$ since $p\geq5$.  From the proof of Proposition~\ref{prop:rnk32}, 
we have the following decomposition of $G(\bar{R}/\F_p)$ into irreducible $\Gamma$-representations:
\begin{equation}\label{repdecomp}
G(\bar{R}/\F_p)\cong (\triv\otimes \underline{\triv})\oplus (\std\otimes\underline{\std})\oplus
(\std_2\otimes\underline{\std})\oplus(\std'\otimes \underline{\std}^\vee)\oplus 
(\std_2\otimes \underline{\std}^\vee)\otimes (\sgn\otimes\underline{\triv}).
\end{equation}
The $\F_p$-dimensions of these irreducible summands are 1, 9, 6, 9, 6, and 1 respectively,
giving a total of 32.  The first $\triv\otimes\underline{\triv}$ corresponds to
the subring $\F_p\subset \bar{R}$; we then observe that no sum of any
subset of elements of $\{9,6,9,6,1\}$ adds up to 8, and thus $G(\bar{R}/\F_p)$
has no $\Gamma$-equivariant quotient ring of rank 24 over $\F_p$. This proves the theorem in the case $n=4$.

A similar argument holds also when $n>4$.  Let 
%$R^\circ=R\oplus\Z^{n-4}$ and let $K^\circ=K\oplus\Q^{n-4}$.  By Lemma \ref{newl}, we have an isomorphism $\varphi:G(K^\circ/\Q)\to G'(K^\circ/\Q)$ which commutes with the surjections from $(K^\circ)^{\otimes n}$ induced by the $i_j$.  
$K$ be a degree $n$ field extension of $\Q$ with associated Galois group $S_n$, and let $\O_K$ denote the ring of integers of $K$.  Let $p> n$ be a prime such that $\O_K/p\O_K\cong \F_p^{n-4}\times Q$ for some quartic $\F_p$-algebra $Q$ (such a prime $p$ exists in any $S_n$-number field $K$ of degree $n$ by the Chebotarev density theorem).  Then $\F_p^{n-3}$ is a subring of $\O_K/p\O_K$.  Let $T$ be the preimage of the subring $\F_p^{n-3}\subset \O_K/p\O_K$ in $\O_K$, and let $R$ be the ring $T+p\O_K$.  
Then $\bar R:=R\otimes\F_p\cong \F_p^{n-4}\times\F_p[x,y,z]/(x,y,z)^2$.

By the identical argument as in the case $n=4$, we may deduce that $G'(R/\Z)$ is a quotient of $G(R/\Z)$.
%The maps $i_1,\dots,i_4:R\to G'(R/\Z)$ determine an $R$-algebra map $\pi':R^{\otimes 4}\to G'(R/\Z)$.  By property $(a)$, the images of the $i_j$ generate $G(R/\Z)$ as a ring, and so $\pi'$ is surjective.  Now $R^{\otimes 4}\subset K^{\otimes 4}$, and since $G'(R/\Z)$ is free of rank 24, by property $(c)$ it is a lattice in $G'(K/\Q)$, which in turn is isomorphic to $G(K/\Q)$ by Theorem~2 and property $(d)$.   We thus see that 
%$$I(R/\Z)\subset R^{\otimes4}\cap I(K/\Q) =R^{\otimes4}\cap\ker(K^{\otimes 4}\to G'(K/\Q))\subset \ker(\pi')$$
%and so the map $\pi'$ factors through $R^{\otimes 4}/I(R,\Z) = G(R,\Z)$, as claimed.  
%
%As in the case $n=4$, we may conclude $\varphi$ to a quotient map $\varphi':G(R^\circ/\Z)\to G'(R^\circ/\Z)$.
%
Furthermore, by Theorem~\ref{thm:functorial}, 
 the $\F_p$-algebra $G(R/\Z)\otimes\F_p$ is isomorphic to $G(\bar{R}/\F_p)$, and hence it has rank $32(n!/24)$ as an $\F_p$-module by Proposition \ref{prop:rnk32} and Theorem~\ref{directsum}.
%Now, by functoriality of the $S_4$-closure, the $\F_p$-module $G(\bar{R}/\F_p)$ is naturally a representation of the group $\Aut_{\F_p}(\bar{R})=\GL_3(\F_p)$, and also of $S_4$, over $\F_p$ and thus (since these actions commute) of the group $\Gamma=S_4\times \GL_3(\F_p)$.  By properties $(b)$ and~$(c)$, we see that $G'(\bar{R}/\F_p)\cong G'(R/\Z)\otimes \F_p$ is a $\Gamma$-equivariant quotient of $G(\bar{R}/\F_p)\cong G(R/\Z)\otimes\F_p$.
%
%By the proof of Theorem~\ref{directsum}, $G(R/\Z)\cong G(R/\Z)^{\oplus N}$ where $N=\frac{n!}{24}$, and t
By the proof of Theorem~\ref{directsum}, the action of $S_n$ on $G(\bar R/\F_p)$ is that induced from the action of $S_4$ on $G(\F_p[x,y,z]/(x,y,z)^2/\F_p)$.  Let $G'_1,\dots,G'_{n!/24}$ denote the images in $G'(\bar R/\F_p)$ of the $n!/24$ copies of $G(\F_p[x,y,z]/(x,y,z)^2/\F_p)$ in $G(\bar R/\F_p)$.
%, which naturally correspond to the elements of the coset space $S_n/S_4$. 
%as in Theorem~\ref{directsum}.  
%The $G'_i$ are submodules of $G'(R^\circ/\Z)$ and hence free.  Moreover, $\varphi$ is an isomorphism, so the $G'_i$ have the same rank.  
Then $G'_1$ yields a $\Gamma$-equivariant quotient of $G(\F_p[x,y,z]/(x,y,z)^2/\F_p)$ having rank 24.  This is again a contradiction.
\end{prooft2}

\section{The maximal rank of $S_n$-closures}
\label{sec:maxrank}

The purpose of this section is to show that the analogues for general
$n$ of the maximally degenerate ring of rank~4 (considered in
Section~\ref{degexample}) form the rings whose $S_n$-closures have
maximal rank.  Thus we prove Theorem~\ref{maxrank}.

The idea of our proof is as follows.  In a sense which we make precise
below, the ring $R_n=K[x_1,\dots,x_{n-1}]/(x_1,\dots,x_{n-1})^2$
is the ``maximally degenerate point'' in the moduli space of all rank~$n$ rings over $K$.  
%, so that every point in the moduli space specializes to it.  
Since Theorem \ref{thm:functorial} shows that the $S_n$-closures
of rank $n$ rings fit together into a nicely-behaved sheaf on the moduli
space, an upper semi-continuity argument allows us to conclude that
the rank of the $S_n$-closure is maximal at the degenerate ring $R_n$.

As in \cite{moduli}, let $\mathfrak{B}_n$ be the functor from
$\mathbf{Schemes}^{\textrm{op}}$ to $\mathbf{Sets}$ which assigns to
any scheme $S$ the set of isomorphism classes of pairs
$(\mathcal{A},\phi)$, where $\mathcal{A}$ is an $\OO_S$-algebra and
$\phi:\mathcal{A}\to\OO_S^n$ is an isomorphism of
$\OO_S$-modules.  By \cite[Prop.~1.1]{moduli}, the functor
$\mathfrak{B}_n$ is representable by an affine scheme of finite type
over $\Z$.

The base change $\mathfrak{B}_{n,K}$ of $\mathfrak{B}_n$ to $\Spec K$
is affine.  Write $\mathfrak{B}_{n,K}=\Spec B_n$.  The identity morphism
from $\mathfrak{B}_{n,K}$ to itself yields a distinguished isomorphism
class of pairs $(A_n,\phi)$ with $A_n$ a $B_n$-algebra and
$\phi:A_n\to B_n^n$ an isomorphism.  Let us choose an object
$(A_n,\phi)$ of this isomorphism class.  Since we are interested in
proving a statement about dimension, this choice does not matter.
Since the $S_n$-closure $G(A_n/K)$ of $A_n$ is a finitely-generated 
$B_n$-module, it defines a coherent sheaf $\mathcal{F}_n$ on
$\mathfrak{B}_{n,K}$.  By Theorem \ref{thm:functorial}, if we
have a morphism $f:\Spec C\to \mathfrak{B}_{n,K}$
corresponding to the pair $(R,\psi)$, then $f^*\mathcal{F}_n$ is
isomorphic to $G(R/C)$.

Note that there is a natural $\GL_{n,K}$-action on
$\mathfrak{B}_{n,K}$ and that Theorem \ref{thm:functorial} shows
that it extends to an action on the sheaf $\mathcal{F}_n$.
%By \cite[Cor 7.4]{moduli}, 
%every point of the moduli space $\mathfrak{B}_{n,K}$ specializes to 
%the $K$-point corresponding to $R_n$ with 
%the choice of ordered basis $(1,x_1,\dots,x_{n-1})$.  
The proof of \cite[Prop.~7.1]{moduli} shows that the $K$-point
corresponding to $R_n$ is in the Zariski closure of the
$\GL_{n,K}$-orbit of any other point.  Upper semi-continuity therefore
shows that the dimension of the fiber of $\mathcal{F}_n$ is maximal at
the point corresponding to $R_n$, as desired.
%; that is, the dimension of $G(R_n/K)$
%is maximal.
%Since the dimension of the fibers of $\mathcal{F}_n$ is upper 
%semi-continuous, we see then that the dimension of $G(R_n/K)$ is maximal.

\section{The $S_n$-closures of the degenerate rings $R_n$}
\label{deg}
\subsection{Preliminaries from $S_n$-representation theory}
In this subsection, we collect several facts from $S_n$-representation
theory that we use in the proof of Theorem~\ref{degtheorem} (made more
precise in Theorem~\ref{thm:main}).  

For us, given a 
positive integer $n$, a {\it partition of $n$} is an $n$-tuple
$\lambda=(\lambda_1,\dots,\lambda_n)$ satisfying 
$n\geq\lambda_1\geq\dots\geq\lambda_n\geq0$ and $\sum \lambda_i=n$.  We often drop 
the $\lambda_i=0$ in our notation, so that the partition $(3,1,0,0)$ of 4, 
for example, is denoted simply as $(3,1)$.  Partitions of $n$ play a key 
role in $S_n$-representation theory due to the following theorem (see, for
example, \cite[\S2.1.12]{repthy}).
\begin{theorem}
If $K$ is a field of characteristic $0$ or of characteristic $p>n$, then 
there is a canonical bijection between the set of 
partitions of $n$ and the set of isomorphism classes of irreducible $S_n$-representations over $K$.
\end{theorem}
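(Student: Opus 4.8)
The statement to be proved is the classical fact that for a field $K$ of characteristic $0$ or characteristic $p>n$, the irreducible $S_n$-representations over $K$ are in canonical bijection with partitions of $n$. The plan is to reduce to the well-known characteristic-zero case via the theory of Specht modules, which has the advantage of being \emph{integral} (defined over $\Z$) and hence of behaving uniformly under the hypotheses on $K$. First I would recall the combinatorial setup: to each partition $\lambda$ of $n$ one associates its Young diagram, the tabloids of shape $\lambda$ (rows-only equivalence classes of fillings), and the polytabloid $e_t = \sum_{\sigma \in C_t} \sgn(\sigma)\, \sigma \{t\}$ built from the column-stabilizer $C_t$ of a tableau $t$. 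The Specht module $S^\lambda$ is the $\Z S_n$-submodule of the permutation module $M^\lambda$ (spanned by tabloids) generated by all polytabloids; set $S^\lambda_K = S^\lambda \otimes_\Z K$.

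The key input is the submodule theorem (James): with respect to the natural $S_n$-invariant bilinear form $\langle\,,\,\rangle$ on $M^\lambda_K$ in which the tabloids form an orthonormal basis, any $K S_n$-submodule $U \subseteq M^\lambda_K$ satisfies either $S^\lambda_K \subseteq U$ or $U \subseteq (S^\lambda_K)^\perp$. Granting this, $D^\lambda := S^\lambda_K / (S^\lambda_K \cap (S^\lambda_K)^\perp)$ is either zero or irreducible. The crucial point where the characteristic hypothesis enters is showing $D^\lambda \neq 0$ for \emph{every} partition $\lambda$ when $\mathrm{char}\,K = 0$ or $> n$: this follows because the form, restricted to $S^\lambda_K$, has nonzero determinant in this range — equivalently $\langle e_t, e_t \rangle = |C_t| = \prod_i \lambda_i'!$ (a product of factorials of column lengths, each at most $n$) is a unit in $K$, so the form is nondegenerate on $S^\lambda_K$ and hence $S^\lambda_K$ itself is irreducible with $D^\lambda = S^\lambda_K$. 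The remaining steps are: (i) the $D^\lambda$ are pairwise non-isomorphic as $\lambda$ ranges over partitions of $n$ — proved by the standard dominance-order argument, showing $\Hom_{K S_n}(S^\lambda_K, M^\mu_K) \neq 0$ forces $\lambda \trianglerighteq \mu$, so a coincidence $D^\lambda \cong D^\mu$ would give $\lambda \trianglerighteq \mu$ and $\mu \trianglerighteq \lambda$, whence $\lambda = \mu$; and (ii) the $D^\lambda$ exhaust all irreducibles — which one gets by a counting argument, since the number of irreducible $K S_n$-modules equals the number of conjugacy classes of $S_n$ (by Maschke's theorem, $K S_n$ is semisimple in this characteristic range, as $|S_n| = n!$ is invertible) when $K$ is a splitting field, and there is one conjugacy class per partition of $n$; for general $K$ in the allowed characteristic range one checks that $\Q$ (resp. $\F_p$) is already a splitting field for $S_n$, so no further irreducibles appear after base change.

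The main obstacle is the verification that the Gram determinant of the bilinear form on $S^\lambda$ (or the weaker statement that $\langle e_t, e_t\rangle$ is a unit) is nonzero in characteristics $0$ and $p > n$ — this is precisely the content that makes the bijection fail in small characteristic and is the place where the hypothesis is genuinely used. I would handle this by the polytabloid computation above rather than by the full James–Murphy Gram determinant formula, since we only need nonvanishing, not the exact factorization. One subtlety to flag: ``canonical'' in the statement means the bijection $\lambda \mapsto [S^\lambda_K]$ does not depend on any auxiliary choices (tableau, ordering), which is clear from the construction since $S^\lambda$ is intrinsically the span of all polytabloids. In the write-up I would cite the standard reference (e.g. \cite{repthy}, \cite{James}) for the submodule theorem and the dominance-order lemma, and give the short argument for nondegeneracy of the form in the relevant characteristics as the one point that must be made explicit.
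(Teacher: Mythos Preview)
The paper does not actually prove this theorem; it is quoted as a standard result with a reference to James--Kerber \cite[2.1.12]{repthy}. Your proposal therefore supplies far more than the paper does, and the strategy you outline --- Specht modules over $\Z$, James's submodule theorem, then a counting argument using semisimplicity of $KS_n$ when $n!$ is invertible --- is the standard and correct route to this result.

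There is one imprecision worth flagging. You write that the Gram determinant of the form on $S^\lambda_K$ being nonzero is ``equivalently'' the statement that $\langle e_t,e_t\rangle=|C_t|$ is a unit. This is not an equivalence: a single nonzero diagonal entry of a Gram matrix does not force the matrix to be nonsingular. What $\langle e_t,e_t\rangle\neq 0$ \emph{does} give you immediately is $e_t\notin (S^\lambda_K)^\perp$, hence $D^\lambda\neq 0$, and that already suffices for the bijection via your counting argument in step~(ii). If you also want the stronger conclusion $S^\lambda_K=D^\lambda$ (i.e.\ that the Specht module itself is irreducible), argue instead: by the submodule theorem $S^\lambda_K\cap(S^\lambda_K)^\perp$ is the unique maximal proper submodule of $S^\lambda_K$, and a semisimple module with a unique maximal proper submodule is simple. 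Either route closes the gap; just drop the word ``equivalently'' and state which of the two you are actually using.
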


Given a partition $\mu$, we denote by $V_\mu$ the
corresponding irreducible $S_n$-representation.  The $V_\mu$ are
called Specht modules and can, in fact, be defined over the integers.
%(see \cite[Def 2.3.4]{sagan}).  
We associate to $\mu$ a \emph{Young diagram}, which consists of
$n$ rows of boxes with $\mu_i$ boxes on the $i^{th}$ row.  For
example, the Young diagram of $\mu=(4,2,2,1)$ is
\[
\yng(4,2,2,1)
\]

A \emph{generalized Young tableau of shape} $\mu$ is a function $f$ 
which assigns a positive integer to every box of the Young diagram of $\mu$.  
We depict $f$ by drawing the Young diagram of $\mu$ and filling 
in each box with the positive integer assigned to it by $f$.  For example,
\[
\young(1212,324,35)
\]
is a generalized Young diagram of shape $(4,3,2)$.  
If $\lambda$ is another partition of $n$, then we say $f$ is a \emph{Young tableau of shape} 
$\mu$ \emph{and content} $\lambda$ if $f$ assigns the number $i$ to exactly $\lambda_i$ boxes.  
%If $\lambda$ and $\mu$ are two partitions of $n$, and if $k=|\{i:\lambda_i\neq0\}|$, then a \emph{Young tableau of shape} $\mu$ \emph{and content} $\lambda$ is an assignment to each box of the Young diagram of $\mu$ an element of $\{1,2,\dots,k\}$ in such a way that the element $i$ has been assigned to exactly $\lambda_i$ boxes.  
Such a Young tableau is called \emph{semi-standard} if the numbers assigned
to the boxes of the Young diagram of $\mu$ weakly increase across rows
and strongly increase down columns.  For example, both
\[
\young(11112,234,3)\quad\textrm{\;and\;\;}\quad\young(11123,124,3)
\]
are Young tableaux of shape $(5,3,1)$ and content $(4,2,2,1)$, but
only the first is semi-standard.  
The \emph{Kostka\ number}
$K_{\mu\lambda}$ is defined to be the number of semi-standard Young
tableaux of shape $\mu$ and content $\lambda$.  
\begin{definition}
\emph{If $\lambda$ and $\mu$ are two partitions of $n$, we say $\mu$ \emph{dominates} $\lambda$ and write 
$\mu\triangleright\lambda$ if $\sum_{i=1}^{j}\mu_i\geq\sum_{i=1}^j\lambda_i$ for all $j$.}
\end{definition}

Note that in order for a Young tableau of shape $\mu$ and content
$\lambda$ to be semi-standard, the $\lambda_i$ boxes containing the 
number $i$ must be in the first $i$ rows.  So, if $\mu$ does not dominate $\lambda$,
then $K_{\mu\lambda}=0$.  The importance of the Kostka numbers is seen
in Young's Rule below (for a proof, see \cite[Cor.~4.39]{fh}).
\begin{theorem}$\emph{(Young's Rule)}$
\label{thm:young}
If $\lambda$ is a partition of $n$, then
\[
\emph{Ind}_{S_{\lambda_1}\times\dots\times S_{\lambda_n}}^{S_n}(\triv)=\bigoplus_{\mu\triangleright\lambda}K_{\mu\lambda}V_\mu.
\]
\end{theorem}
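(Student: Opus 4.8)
The plan is to identify the permutation module $M^\lambda:=\Ind_{S_{\lambda_1}\times\cdots\times S_{\lambda_n}}^{S_n}(\triv)$ through the Frobenius characteristic map and thereby reduce the statement to the classical symmetric-function identity $h_\lambda=\sum_\mu K_{\lambda\mu}\,s_\mu$. Since $K$ has characteristic $0$ or characteristic $p>n$, the order $n!$ of $S_n$ is invertible in $K$, so $K[S_n]$ is semisimple by Maschke's theorem; in particular each Specht module $V_\mu$ is irreducible, the $V_\mu$ are pairwise non-isomorphic, every representation is determined up to isomorphism by its character, and it suffices to prove the corresponding identity of characters. Hence I may work as if $K=\C$.

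First I would recall the Frobenius characteristic $\mathrm{ch}$, the linear isometry from virtual characters of $S_n$ onto the degree-$n$ homogeneous symmetric functions, under which $\mathrm{ch}(V_\mu)=s_\mu$ (the Schur function --- the Frobenius character formula, which one may take as the definition of $s_\mu$) and $\mathrm{ch}(M^\lambda)=h_\lambda:=h_{\lambda_1}h_{\lambda_2}\cdots$ (the product of complete homogeneous symmetric functions). The second identity holds because $\mathrm{ch}$ converts the outer product of induced representations into multiplication of symmetric functions and $\mathrm{ch}(\triv_{S_r})=h_r$; see \cite{fh} for all of this. Thus Young's Rule becomes equivalent to the symmetric-function identity $h_\lambda=\sum_\mu K_{\lambda\mu}\,s_\mu$.

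Next I would establish that identity by iterating Pieri's rule $s_\nu\cdot h_r=\sum_{\nu'}s_{\nu'}$, the sum running over partitions $\nu'$ obtained from $\nu$ by adjoining a horizontal strip of $r$ boxes. Starting from $s_{\emptyset}=1$ and multiplying successively by $h_{\lambda_1},h_{\lambda_2},\dots$, the coefficient of $s_\mu$ in $h_\lambda$ becomes the number of chains $\emptyset=\mu^{(0)}\subseteq\mu^{(1)}\subseteq\cdots\subseteq\mu^{(\ell)}=\mu$ for which $\mu^{(i)}/\mu^{(i-1)}$ is a horizontal strip of size $\lambda_i$. Labelling each box of $\mu$ by the index $i$ of the strip containing it sets up a bijection between such chains and semistandard Young tableaux of shape $\mu$ and content $\lambda$ (the horizontal-strip conditions translate exactly into ``rows weakly increase, columns strictly increase''), so this coefficient is $K_{\lambda\mu}$. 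Since $K_{\lambda\mu}=0$ whenever $\mu\not\triangleright\lambda$, the sum may be taken over $\mu\triangleright\lambda$, as in the statement. Applying $\mathrm{ch}^{-1}$ and then using semisimplicity to upgrade the character identity to an isomorphism of $S_n$-representations completes the argument.

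I expect the genuinely substantive input to be $\mathrm{ch}(V_\mu)=s_\mu$: proving the Frobenius character formula requires either the explicit Specht-module construction together with a character computation, or the abstract characterization of $\{s_\mu\}$ as the orthonormal basis triangularly related to $\{h_\lambda\}$ under dominance --- here one simply invokes \cite{fh}. The combinatorial step is then elementary, Pieri's rule itself being a short consequence of the Cauchy identity. A symmetric-function-free alternative is James's method: attach to each semistandard tableau of shape $\mu$ and content $\lambda$ a ``semistandard homomorphism'' $S^\mu\to M^\lambda$ and show these form a basis of $\Hom_{S_n}(S^\mu,M^\lambda)$, so that its dimension is $K_{\lambda\mu}$; there the crucial and least formal point is the case $\mu=\lambda$, giving multiplicity one of $V_\lambda$ in $M^\lambda$, after which one inducts downward through the dominance order.
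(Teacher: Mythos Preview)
Your sketch is correct. Note, however, that the paper does not give its own proof of Young's Rule: it simply cites \cite[Cor.~4.39]{fh}. Your approach via the Frobenius characteristic, the identification $\mathrm{ch}(M^\lambda)=h_\lambda$ and $\mathrm{ch}(V_\mu)=s_\mu$, and iteration of Pieri's rule to obtain $h_\lambda=\sum_\mu K_{\lambda\mu}s_\mu$ is exactly the argument in Fulton--Harris leading to that corollary, so you are reproducing the proof the paper defers to. The alternative you mention via semistandard homomorphisms (James's approach, as in \cite{repthy} or \cite{sagan}) is the other standard route and would also serve; the paper makes no choice between them.
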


In particular, since 
\[
K[S_n]=\Ind_{S_1\times\dots\times S_1}^{S_n}(\triv)
\]
we see that $K_{\mu\lambda}=\dim V_\mu$, where $\lambda=(1,1,\dots,1)$.

There is a second combinatorial theorem we later make use of.  This
theorem, known as the hook formula, gives another way to relate $\dim
V_\lambda$ to the Young diagram of $\lambda$.
\begin{definition}
\emph{The \emph{hook number} of the $j^{th}$ box in the $i^{th}$ row of the Young diagram of $\lambda$ is 
$1+\lambda_i-j+|\{k\;:\;k>i,\lambda_k\geq\lambda_i\}|$.  That is, it is the number boxes in the ``hook'' which runs up 
the $j^{th}$ column, stops at the box in question, and continues across the $i^{th}$ row to the right.}
\end{definition}

For example, replacing each box in the Young diagram of $(4,2,2,1)$ by
its hook number, we have
\[
\young(7521,42,31,1)
\]
\begin{theorem}$\emph{(Hook Formula)}$ Given a partition $\lambda$ of
  $n$, let $H$ be the product of the hook numbers of the boxes in the
  Young diagram of $\lambda$.  Then $\dim V_\lambda=\frac{n!}{H}$.
\end{theorem}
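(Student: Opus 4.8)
The plan is to follow the classical Frame--Robinson--Thrall strategy: extract a determinantal (Frobenius) formula for $\dim V_\lambda$ from Young's Rule, evaluate the resulting determinant as a Vandermonde, and match the answer against the hook product by a purely combinatorial identity. Throughout, write $k=|\{i:\lambda_i\neq 0\}|$ for the number of nonzero parts and set $\ell_i=\lambda_i+k-i$ for $1\le i\le k$, so that $\ell_1>\ell_2>\dots>\ell_k\ge 0$; note that $\ell_i$ is exactly the hook number of the leftmost box $(i,1)$ of row $i$. The first step is to establish
\[
\dim V_\lambda=n!\,\det\!\left(\tfrac{1}{(\lambda_i-i+j)!}\right)_{1\le i,j\le k},
\]
with the convention $1/m!=0$ for $m<0$. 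This is the dimension shadow of the Jacobi--Trudi identity $s_\lambda=\det(h_{\lambda_i-i+j})_{1\le i,j\le k}$, which in representation-theoretic terms expresses the Specht module as the virtual sum $\sum_{\sigma\in S_k}\sgn(\sigma)\,\Ind_{S_{\lambda_1-1+\sigma(1)}\times\dots\times S_{\lambda_k-k+\sigma(k)}}^{S_n}(\triv)$ of trivial representations induced from Young subgroups (terms with a negative subscript being read as zero); taking dimensions and using $\dim\Ind_{S_{b_1}\times\dots\times S_{b_k}}^{S_n}(\triv)=n!/(b_1!\cdots b_k!)$ yields the displayed determinant. One could instead stay within the tools already set up here: since the Kostka matrix is unitriangular for the dominance order, the relation $n!/(\lambda_1!\cdots\lambda_k!)=\sum_{\mu\triangleright\lambda}K_{\lambda\mu}\dim V_\mu$ from Young's Rule can be solved for $\dim V_\lambda$, and Jacobi--Trudi is simply the closed form of that triangular inversion.

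The second step is to evaluate the determinant. Writing $\lambda_i-i+j=\ell_i-(k-j)$ and factoring $1/\ell_i!$ out of the $i$-th row, the $(i,j)$ entry becomes the falling factorial $\ell_i(\ell_i-1)\cdots(\ell_i-(k-j)+1)$ --- a monic polynomial of degree $k-j$ in $\ell_i$ that vanishes exactly when $0\le\ell_i<k-j$, so the convention $1/m!=0$ is respected automatically. Column operations then convert the matrix to $(\ell_i^{\,k-j})_{1\le i,j\le k}$, whose determinant is the Vandermonde $\prod_{1\le i<i'\le k}(\ell_i-\ell_{i'})$. Hence
\[
\dim V_\lambda=\frac{n!\prod_{1\le i<i'\le k}(\ell_i-\ell_{i'})}{\ell_1!\,\ell_2!\cdots\ell_k!}.
\]

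The third step carries the real content: the combinatorial identity
\[
\prod_{(i,j)}h_{ij}=\frac{\ell_1!\,\ell_2!\cdots\ell_k!}{\prod_{1\le i<i'\le k}(\ell_i-\ell_{i'})},
\]
where the product on the left runs over all boxes of $\lambda$ and $h_{ij}=(\lambda_i-j)+(\lambda'_j-i)+1$ is the hook number ($\lambda'$ the conjugate partition); combining it with the previous display gives $\dim V_\lambda=n!/H$. The crux is the row-by-row statement that for each fixed $i$ the multiset of hook numbers occurring in row $i$ equals $\{1,2,\dots,\ell_i\}\setminus\{\ell_i-\ell_{i'}:i<i'\le k\}$: granting this, the product of the hook numbers in row $i$ is $\ell_i!\big/\prod_{i'>i}(\ell_i-\ell_{i'})$, and multiplying over the rows yields the identity. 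This row statement I would prove by passing to beta-numbers, viewing $\{\ell_1,\dots,\ell_k\}$ as the occupied positions of a bead configuration on $\{0,1,2,\dots\}$: the hook numbers appearing in row $i$ are precisely the differences $\ell_i-q$ as $q$ ranges over the \emph{unoccupied} positions below $\ell_i$, and those are exactly $\{0,1,\dots,\ell_i-1\}\setminus\{\ell_{i'}:i'>i\}$ because $\ell_{i'}<\ell_i$ if and only if $i'>i$. (Avoiding that language, one can argue by induction on $k$: deleting the last row of $\lambda$ replaces each $\ell_i$ by $\ell_i-1$ and lowers by one precisely the hook numbers sitting directly above the deleted row.) I expect this identification --- matching the ``missing'' hook values $\ell_i-\ell_{i'}$ with the gaps below $\ell_i$ in the bead picture --- to be the only genuinely delicate point; everything else is formal manipulation of determinants and multinomial coefficients. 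As an entirely different route, should one wish to avoid symmetric-function input altogether, one can give the probabilistic ``hook walk'' proof of Greene--Nijenhuis--Wilf, though it fits less naturally with the $S_n$-representation-theoretic machinery already developed above.
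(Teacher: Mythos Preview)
The paper does not prove the Hook Formula; it is stated in the preliminaries subsection as a classical fact from $S_n$-representation theory, without proof or explicit citation, and is then used as a black box in the proof of Theorem~\ref{cor:reg}. So there is no ``paper's own proof'' to compare against.

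Your argument is the classical Frame--Robinson--Thrall proof and is correct. The three steps --- the determinantal formula for $\dim V_\lambda$, its reduction to a Vandermonde in the $\ell_i=\lambda_i+k-i$, and the row-wise identification of hook lengths with $\{1,\dots,\ell_i\}\setminus\{\ell_i-\ell_{i'}:i'>i\}$ --- are all standard and you have stated them accurately. One small caveat: the remark that Jacobi--Trudi ``is simply the closed form of that triangular inversion'' of the Kostka matrix is true but not self-evident, and proving it from scratch is essentially as much work as proving Jacobi--Trudi by any other route; since you are already willing to invoke Jacobi--Trudi directly, this aside is harmless, but you should not present it as an easier alternative. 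The bead/abacus justification of the row-wise hook identity is exactly right and is the cleanest way to handle that step.
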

For a proof, see \cite[Thm.~3.10.2]{sagan}.
%For example, we see that the dimensions of $V_{(n)}$ and $V_{(1,1,\dots,1)}$ are both $1$.  The Young Rule tells us that these are the trivial representation and the sign representation, respectively. (TAKE THIS OUT, OR GIVE AN EASY PROOF THAT (1,1,...,1) IS SIGN REPRESENTATION).

\subsection{A structure theorem for $S_n$-closures of degenerate
  rings}

Throughout this subsection, $K$ is a field of characteristic $0$ or of
characteristic $p>n$, and $R_n$ denotes the degenerate ring
$K[x_1,\dots,x_{n-1}]/(x_1,\dots,x_{n-1})^2$.  Then $R_n^{\otimes n}$
is a $K$-vector space of dimension $n^n$ with basis
$x_{i_1}\otimes\dots\otimes x_{i_n}$, where $i_j\in\{0,\dots,n-1\}$
and $x_0:=1$.  For notational convenience, we drop the tensor signs 
and let $I:=I(R_n,K)$.  Our goal in this subsection is to prove 
\begin{theorem}
\label{thm:main}
For each partition $\lambda$ of $n$, let $m_\lambda$ be the
multinomial coefficient $\binom{n-1}{k_0;\dots;k_{n-1}}$, where
$k_j=|\{i:i\neq1,\lambda_i=j\}|$.  Then there is an isomorphism
\[
G(R_n/K)\cong\bigoplus_{\substack{\mu\triangleright\lambda\\ \mu_1=\lambda_1}}m_\lambda K_{\mu\lambda}V_\mu
\]
of $S_n$-representations over $K$.
\end{theorem}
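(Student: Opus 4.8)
The plan is to compute $G(R_n/K)$ as an $S_n$-representation by first identifying a convenient $K$-spanning set for $G(R_n/K)$, then reorganizing it according to which ``positions'' carry which variable, and finally recognizing each resulting summand as an induced representation to which Young's Rule applies. First I would observe, as in the discussion preceding the theorem, that $I = I(R_n,K)$ is spanned as a $K$-vector space by the elements $\gamma(x_k, x_{i_1}\cdots x_{i_n})$ where $x_k$ ranges over the basis variables $x_1,\dots,x_{n-1}$ of $R_n$ and the monomial $x_{i_1}\cdots x_{i_n}$ ranges over the $n^n$ basis monomials of $R_n^{\otimes n}$ (with $x_0 = 1$). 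Because $(x_1,\dots,x_{n-1})^2 = 0$ in $R_n$, multiplying out $\gamma$ and discarding terms in which some tensor slot acquires a square shows that each monomial in $G(R_n/K)$ can be taken to be \emph{multilinear in the slots}: each tensor factor contributes either $1$ or one of the $x_k$, and no variable is repeated in a given slot. Thus a spanning set for $G(R_n/K)$ is indexed by functions $f\colon\{1,\dots,n\}\to\{0,1,\dots,n-1\}$ with $f^{-1}(k)$ a single slot (or empty) for each $k\geq 1$; that is, by partial injections from the variable set to the slot set, together with the ``content'' data of how many slots carry each variable value versus the value $1$. The $S_n$-action permutes the slots.

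Next I would cut these spanning monomials according to the multiset of variables used. Group variables by multiplicity class: if a monomial uses $k_j$ distinct variables each appearing... — more precisely, stratify by the partition $\lambda$ of $n$ recording, for each slot, how ``loaded'' it is; the relations $\gamma(x_k,-)=0$ are exactly trace relations $\sum_{\text{slots}} x_k^{(\text{slot})}\cdot(\text{rest}) = 0$, so within each stratum the quotient is governed by the combinatorics of distributing variables among slots subject to a linear ``sum over slots vanishes'' constraint for each variable. I expect that after using these relations to eliminate one slot's worth of data per variable (mirroring the rank-$3$ and rank-$4$ computations in Sections~\ref{cubisec}--\ref{degexample}, where $x_n = -x_1-\cdots-x_{n-1}$ etc.), the stratum attached to a shape $\mu$ with $\mu_1 = \lambda_1$ contributes, as an $S_n$-module, a copy of $\mathrm{Ind}_{S_{\lambda_1}\times\cdots\times S_{\lambda_n}}^{S_n}(\triv)$ for each way of assigning the actual variable labels to the ``columns'' of the configuration — and the number of such label assignments is precisely the multinomial coefficient $m_\lambda = \binom{n-1}{k_0;\dots;k_{n-1}}$, since the $n-1$ variables must be distributed among the multiplicity classes recorded by $\lambda$ (with $k_0$ of them unused, i.e. mapped to content-$0$). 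Then Young's Rule (Theorem~\ref{thm:young}) rewrites each induced module as $\bigoplus_{\mu\triangleright\lambda}K_{\lambda\mu}V_\mu$, and the constraint $\mu_1 = \lambda_1$ drops out because a slot can hold at most one copy of any given variable, capping the longest row.

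The main obstacle I anticipate is proving that the spanning set described above is actually a \emph{basis} — i.e., that the trace relations $\gamma(x_k,-)=0$ generate \emph{exactly} the linear dependencies among the multilinear slot-monomials, with no further collapse. Showing containment of the relations is routine; showing there are no hidden relations requires a dimension count or an explicit construction, analogous to the linear-independence arguments via functoriality and specialization used in the proof of Theorem~\ref{cubicase2}. Here I would instead proceed representation-theoretically: I would construct, for each $\mu$ with $\mu_1=\lambda_1$ and each variable-labeling, an explicit $S_n$-equivariant surjection from the relevant induced module onto the corresponding graded piece of $G(R_n/K)$, and then exhibit enough linearly independent elements (or dually, enough $K$-algebra homomorphisms $G(R_n/K)\to K$, or maps to split étale test algebras as in Section~\ref{bnsec}) to force injectivity. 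The bookkeeping that the total dimension $\sum_\lambda m_\lambda\sum_{\mu\triangleright\lambda,\ \mu_1=\lambda_1}K_{\lambda\mu}\dim V_\mu$ matches the honest dimension of $G(R_n/K)$ — which for $n=4$ recovers $32$ — then closes the argument; the characteristic hypothesis $p=0$ or $p>n$ enters only to guarantee semisimplicity of $K[S_n]$ and the validity of Young's Rule in the form stated.
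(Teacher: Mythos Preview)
Your outline has the right coarse shape---stratify by content, recognize induced modules, apply Young's Rule---but there are two genuine gaps.

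First, your description of the spanning monomials is wrong: a $K$-basis of $R_n^{\otimes n}$ consists of \emph{all} functions $f:\{1,\dots,n\}\to\{0,\dots,n-1\}$, with no constraint that $f^{-1}(k)$ be a singleton for $k\geq 1$. The same variable $x_k$ may occupy several tensor slots (for instance $x_1\otimes x_1\otimes 1\otimes\cdots\otimes 1$ is a legitimate basis element, and in the $n=4$ computation of Section~\ref{degexample} such elements span the nonzero two-dimensional pieces $U(x)$, $U(y)$, $U(z)$). The correct stratification is by the ordered tuple $a=(a_0,\dots,a_{n-1})$ with $a_k=|f^{-1}(k)|$, and a nontrivial inclusion--exclusion argument (Lemma~\ref{l:incexc}) is required to show that the strata with $a_0<a_k$ for some $k$ lie entirely in $I$. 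Only after this do the surviving strata, grouped by the underlying partition $\lambda$, contribute $m_\lambda$ isomorphic copies of $M_\lambda\cong\Ind_{S_{\lambda_1}\times\cdots\times S_{\lambda_n}}^{S_n}(\triv)$.

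Second, and more seriously, you have no mechanism for identifying $I_\lambda$ inside $M_\lambda$. Your heuristic that ``$\mu_1=\lambda_1$ drops out because a slot can hold at most one copy of any given variable, capping the longest row'' is not the reason; the constraint emerges only from a direct analysis of $I_\lambda$. The paper observes that $I_\lambda$ is spanned by subrepresentations $\Gamma_m$ (one for each generator $\gamma(x_m,-)$), and that each $\Gamma_m$ is itself induced from the trivial representation of a Young subgroup whose largest part is $\lambda_1+1$ (because applying $\gamma(x_m,-)$ symmetrizes one additional slot into the block of $1$'s). By Young's Rule only $V_\mu$ with $\mu_1>\lambda_1$ can occur in any $\Gamma_m$, hence in $I_\lambda$ (Proposition~\ref{prop:proper}); conversely an explicit semistandard-tableau computation (Proposition~\ref{prop:first}) places every such $V_\mu$ inside $I_\lambda$. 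Your proposed alternative of detecting independence via $K$-algebra maps $G(R_n/K)\to K$ or maps to split \'etale test rings cannot succeed here: $R_n$ is local with residue field $K$, so there is exactly one such homomorphism, far too few to witness even the $32$ dimensions already present when $n=4$.
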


As we will show in Theorem \ref{cor:reg}, the theorem above implies that
the dimension of $G(R_n/K)$ over~$K$ is greater than $n!$ for $n\geq4$; that
is, it implies Theorem~\ref{degtheorem}.  As a first step in proving
Theorem~\ref{thm:main}, we begin by crudely decomposing $G(R_n/K)$
into certain naturally occurring $S_n$-representations parametrized
by partitions of $n$.

\begin{definition}
\emph{
Given an ordered partition $a=(a_0,a_1,\dots,a_{n-1})$ of $n$, let $M_a$ be the subrepresentation of $R_n^{\otimes n}$ spanned as a $K$-vector space by the elements
$x_{i_1}\cdots x_{i_n}$ with \,$|\{j:i_j=k\}|=a_k$.
}
\end{definition}

For example, writing $x$ and $y$ for $x_1$ and $x_2$, respectively, if $a=(1,2,1)$, then $M_a$ spanned over $K$ by the $12$ elements $1xxy$, $1xyx$, $1yxx$, $\dots$, $xx1y$, and $xxy1$.  Note that 
\[
%R_n^{\otimes n}=\bigoplus_{\stackrel{a\textrm{\ ordered}}{\textrm{partition\ of\ }n}}M_a
R_n^{\otimes n}=\bigoplus_a M_a
\]
as $S_n$-representations.

Let 
$\gamma(x_i\,,\,x_{i_1}\cdots x_{i_n})=\gamma_1(x_i\,,\,x_{i_1}\cdots x_{i_n})\in R_n^{\otimes n}$
where, for any $j\in\{1,\ldots,n\}$,  we set
\begin{equation}\gamma_j(x_i\,,\,x_{i_1}\cdots x_{i_n})\:=\:
%(x_i1\cdots1+1x_i\cdots1+\cdots+11\cdots x_i)^\ell
x_{i_1}\cdots x_{i_n}\cdot \sum_{1\leq i_1< \ldots< i_j\leq n}
x_i^{(i_1)}\cdots x_i^{(i_j)}.\end{equation}
Note that all such $\gamma_j(x_i\,,\,x_{i_1}\cdots x_{i_n})\in I$, because $s_j(x_i)=0$.

\begin{definition}
\emph{Given an ordered 
partition $a$ of $n$, let $I_a$ be the $K$-vector subspace of $I$ generated by all $\gamma(x_i\,,\,x_{i_1}\cdots x_{i_n})\in M_a$ with $i>0$.
}
\end{definition}
  
For example, again writing $x$ and $y$ for $x_1$ and $x_2$, if $a=(1,2,1)$ then $I_a$ is generated by the $6$ elements $\gamma(y,11xx)$, $\gamma(y,1x1x)$, $\dots$, $\gamma(y,xx11)$ as well as the $12$ elements $\gamma(x,11xy)$, $\gamma(x,1x1y)$, $\dots$, $\gamma(x,yx11)$.
%As shown in the proof of Theorem \ref{thm:functorial}, %As shown in Section ??, the ideal $I:=I(R_n,K)$ is generated by the relations (\ref{fundrels}) where $a$ ranges through a basis of $R_n$ over $K$.  The ideal $I$ is therefore generated as a $K$-vector space by the $\gamma(x_i,11\dots1)$.  As mentioned in the introduction, there is a natural $S_n$-action on $R_n^{\otimes{n}}$ given by permuting the tensor factors and this passes to an action on the $S_n$-closure $G(R_n/K)$ of $R_n$.  Here, we see that
%\[\pi(\gamma(x_k,x_{i_1}x_{i_2}\dots x_{i_n}))=\gamma(x_k,x_{i_{\pi^{-1}(1)}}x_{i_{\pi^{-1}(2)}}\dots x_{i_{\pi^{-1}(n)}})\]for all $\pi\in S_n$.
%
%Let $a=(a_0,a_1,\dots,a_{n-1})$ be an ordered partition of $n$ and let $M_a$ be the subrepresentation of $R_n^{\otimes n}$ generated by the $x_{i_1}\dots x_{i_n}$ with $a_k=|\{j:i_j=k\}|$.  Let $I_a$ be the subrepresentation of $I$ generated by the $\gamma(x_j,x_{j_1}\dots x_{j_n})\in M_a$.  For example, writing $x$ and $y$ for $x_1$ and $x_2$, respectively, if $a=(1,2,1)$, then $M_a$ is generated by the $12$ elements $1xxy$, $1xyx$, $1yxx$, $\dots$, $xx1y$, and $xxy1$; $I_a$ is generated by the $6$ elements $\gamma(y,11xx)$, $\gamma(y,1x1x)$, $\dots$, $\gamma(y,xx11)$ as well as the $12$ elements $\gamma(x,11xy)$, $\gamma(x,1x1y)$, $\dots$, $\gamma(x,yx11)$.

\begin{lemma}
If $a$ is an ordered partition of $n$, then $I\cap M_a=I_a$, and so
\[
G(R_n/K)=\bigoplus_a M_a/I_a.
\]
\end{lemma}
\begin{proof}
Clearly, $I_a$ is contained in $I\cap M_a$.
To prove the other containment, let 
$\beta\in I\cap M_a\subset I$.  By Section~2,
$I$ is generated as an ideal by the elements     
$\gamma_j(x_i, 11\cdots1)$ with $i,j > 0$, and therefore as a $K$-vector         
space by the $x_{i_1}\cdots x_{i_n}\cdot\gamma_j(x_i, 11\cdots1)
=\gamma_j(x_i\,,\,x_{i_1}\cdots x_{i_n})$ with $i,j >  0$.  
%It follows that we may write
%\[
%\beta=\sum_{k=1}^N\lambda_i\alpha_i,
%\]
%where $\lambda_i\in K$ and $\alpha_i$ is some $\gamma_j(x_i\,,\,x_{i_1}\cdots x_{i_n})\in M_{a(i)}$ with $i,j>0$, and $a(i)$ is some ordered 
%partition of $n$.  Since $R_n^{\otimes n}$ is a direct sum of the $M_{a'}$, we see that for $a'\neq a$, 
Since each $\gamma_j(x_i\,,\,x_{i_1}\cdots x_{i_n})$ is contained in $M_{a'}$ for some ordered partition $a'$ of $n$, we see that 
%\[
%\sum_{a(i)=a'}\lambda_i\alpha_i=0.
%\]
%Therefore, 
$\beta$ can be expressed as a $K$-linear combination of the 
%$\alpha_i\in M_a$, and hence of the 
various $\gamma_j(x_i\,,\,x_{i_1}\cdots x_{i_n})\in M_a$.

To prove the lemma, it remains to show that the $\gamma_j(x_i\,,\,x_{i_1}\cdots x_{i_n})\in I_a$ for $j\in\{1,\ldots,n\}$ can be expressed as $K$-linear combinations of the $\gamma(x_i\,,\,x_{i_1}\cdots x_{i_n})\in I_a$.  To see this, it suffices to note that $\gamma_j(x_i\,,\,x_{i_1}\cdots x_{i_n})\in I_a$, for any $j\in\{1,\ldots,n\}$, can be expressed as
\[ \gamma_j(x_i\,,\,x_{i_1}\cdots x_{i_n})=\frac1j\cdot\gamma_{j-1}(x_i\,,\,x_{i_1}\cdots x_{i_n})\cdot\sum_{k=1}^n x_i^{(k)}.\]
It follows by induction on $j$ that any $\gamma_j(x_i\,,\,x_{i_1}\cdots x_{i_n})\in M_a$ ($j\in\{1,\ldots,n\}$) can be expressed as a $K$-linear combination of the $\gamma(x_i\,,\,x_{i_1}\cdots x_{i_n})\in I_a$, proving the lemma.
%This proves the claim.
%, and as a result,\[G(R_n/K)=\bigoplus_a M_a/I_a.\]
  \end{proof}

Our next lemma shows that if $a_0<a_k$ for some $k$, then $M_a=I_a$.
\begin{lemma}
\label{l:incexc}
Let $i_1,\dots,i_n\in\{0,1,\dots,n-1\}$.  If there is some $k$ such that 
\[
|\{j:i_j=0\}|<|\{j:i_j=k\}|,
\]
then $x_{i_1}\cdots x_{i_n}\in I$.
\end{lemma}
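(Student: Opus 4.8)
The plan is to realize the given basis monomial $\mathbf m=x_{i_1}\otimes\dots\otimes x_{i_n}$ (I suppress tensor signs, as in the text) as an explicit $K$-linear combination of products $\nu\cdot\sigma_k$, where $\sigma_k:=x_k^{(1)}+\dots+x_k^{(n)}=\gamma(x_k,11\dots1)$ is one of the generators of $I$ and $\nu$ runs over a carefully chosen family of monomials. Fix an index $k$ realizing the hypothesis (so $k\ge 1$) and partition the $n$ tensor positions into $S=\{\ell:i_\ell=0\}$, $T=\{\ell:i_\ell=k\}$, and $U=\{1,\dots,n\}\setminus(S\sqcup T)$; write $s=|S|<t=|T|$. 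The single elementary computation I would use repeatedly is that for any basis monomial $\nu$ of $R_n^{\otimes n}$,
\[
\nu\,\sigma_k=\sum_{\ell\in P(\nu)}\bigl(\nu\text{ with its }\ell\text{th tensor factor replaced by }x_k\bigr),
\]
where $P(\nu)$ is the set of positions of $\nu$ holding $1$: indeed $x_bx_k=0$ in $R_n$ whenever $b,k\ge 1$, so only the $1$-slots survive. Thus multiplying by $\sigma_k$ ``upgrades a $1$ to an $x_k$ in every available slot.''

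For each $(s+1)$-element subset $P\subseteq S\sqcup T$, let $\nu_P$ be the monomial that agrees with $\mathbf m$ on the positions of $U$, holds $1$ on the positions of $P$, and holds $x_k$ on the remaining positions of $S\sqcup T$; since $|P|=s+1>|S|$, every such $P$ meets $T$. I then claim an identity of the form
\[
\mathbf m=\sum_P \gamma_{|P\cap T|}\,\nu_P\,\sigma_k
\]
for suitable scalars $\gamma_1,\dots,\gamma_{s+1}\in K$, which immediately gives $\mathbf m\in I$ since each $\nu_P\sigma_k\in I$. Expanding the right-hand side via the displayed computation and collecting terms by the resulting monomial, the coefficient of $\mu_Q$ --- the monomial agreeing with $\mathbf m$ on $U$, holding $1$ on an $s$-subset $Q\subseteq S\sqcup T$ and $x_k$ elsewhere on $S\sqcup T$ --- comes out to $i\gamma_i+(t-i)\gamma_{i+1}$ with $i=|Q\cap T|$. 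Since $\mu_S=\mathbf m$ and $Q=S$ is the unique $s$-subset with $i=0$, the claim reduces to choosing the $\gamma_j$ so that $t\gamma_1=1$ and $i\gamma_i+(t-i)\gamma_{i+1}=0$ for $1\le i\le s$; this recursion solves to $\gamma_j=(-1)^{j-1}(j-1)!\,(t-j)!/t!$. It is well posed because the only integers inverted are $t,t-1,\dots,t-s$, all lying in $\{1,\dots,n\}$ and hence units in $K$ under the standing hypothesis on the characteristic. The assumption $s<t$ is used precisely to keep $t-i\ne 0$ throughout the recursion (and the statement genuinely fails when $s=t$).

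The one genuinely nonobvious step is guessing the weights $\gamma_{|P\cap T|}$. The heuristic: by the evident symmetry permuting positions within $S$, within $T$, and within $U$, the weight on $\nu_P$ should depend only on $|P\cap T|$, which collapses the linear system (one equation per $s$-subset $Q$) to the single-index recursion above, solvable in closed form. Everything else is routine bookkeeping: proving the displayed identity, tracking the coefficient $i\gamma_i+(t-i)\gamma_{i+1}$, and checking invertibility of the factorials. Alternatively one could quote the fact that the inclusion matrix of $s$-subsets against $(s+1)$-subsets of an $(s+t)$-set has full rank when $s<t$, but the explicit formula is cleaner and keeps the argument self-contained.
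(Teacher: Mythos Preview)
Your argument is correct. The identity $\nu\,\sigma_k=\sum_{\ell\in P(\nu)}\nu|_{\ell\to x_k}$ is exactly right (since every non-$1$ slot is killed by $x_k$), the coefficient of $\mu_Q$ really is $i\gamma_i+(t-i)\gamma_{i+1}$ with $i=|Q\cap T|$, and the recursion is solvable because $t,t-1,\dots,t-s$ are all units under the standing hypothesis on $\mathrm{char}\,K$.

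Your route, however, is genuinely different from the paper's. The paper does not solve a weighted linear system in terms of $\sigma_k=e_1$ alone; instead it runs an integer-coefficient inclusion--exclusion over subsets $R$ of the zero-positions $S\setminus T$, writing
\[
\mathbf m=[T]=\sum_{R\subseteq S\setminus T}(-1)^{|R|}\!\!\sum_{\substack{P\subseteq S\\ |P|=|T|,\ R\subseteq P}}[P],
\]
and then observes that each inner sum factors as $e_{|T|-|R|}\cdot\beta_R$ for a single monomial $\beta_R$, where $e_m=e_m\bigl(x_k^{(1)},\dots,x_k^{(n)}\bigr)$. Since $s_m(x_k)=0$, the elements $e_m$ for $m\ge 1$ are themselves among the defining generators of $I$, so each summand lies in $I$ with no denominators at all; the hypothesis $|S\setminus T|<|T|$ enters only to force $|T|-|R|\ge 1$. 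Thus the paper's proof is characteristic-free and uses the higher symmetric relations $e_2,\dots,e_{|T|}$, whereas yours uses only the trace relation $e_1=\sigma_k$ at the cost of inverting integers up to $t$. In exchange, your argument gives an explicit closed-form expression and incidentally shows that, under the characteristic hypothesis, the ideal $I$ is already generated by the $\sigma_{x_i}$ alone.
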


Since the notation in the proof of this lemma is a bit cumbersome, we
first illustrate the proof with a specific example.  Denoting $x_1$,
$x_2$, and $x_3$ by $x$, $y$, and $z$, respectively, let us show
$1yx1xzyx\in I$.  For $a,b,c\in S:=\{1,3,4,5,8\}$, let $[a,b,c]$
denote $x_{i_1}\cdots x_{i_8}$ with $i_a=i_b=i_c=1$,
$i_2=i_7=2$, $i_6=3$, and all other $i_j=0$.  For example,
$[1,3,4]=xyxx1zy1$ and $[3,5,8]=1yx1xzyx$.  By the inclusion-exclusion
principle, we have
\[
\begin{array}{rcrcrcrcrcr}
1yx1xzyx&=&\displaystyle{\sum_{\substack{a<b<c\\ a,b,c\in S}}[a,b,c]} &-& 
\displaystyle\sum_{\substack{a<b\\ a,b\in S-\{1\}}}[1,a,b] &-& 
\displaystyle\sum_{\substack{a<b\\ a,b\in S-\{4\}}}[4,a,b] &+& \displaystyle\sum_{a\in S-\{1,4\}}\;[1,4,a] \\[.36in]
 &=& \gamma_3(x,1y111zy1) &-&\gamma_2(x,xy111zy1) &-&\gamma_2(x,1y1x1zy1) &+&\gamma_1(x,xy1x1zy1)\,.
 \end{array}
\]
Hence $1yx1xzyx\in I$.  
%For example, $\sum[1,a,b]=\gamma_2(x,xy111zy1)\in I$.
% where 
%$\alpha$ is the sum of all elements of the form $x_{i_1}\cdots x_{i_8}$ with exactly two of the $i_j=1$ and all other 
%$i_j=0$.  Hence $\alpha=

\vspace{.125in}
%We now give the proof of Lemma \ref{l:incexc}.
\begin{proofl}
%[Proof of Lemma \ref{l:incexc}]
Let $S=S_0\cup S_k$, where $S_0=\{j:i_j=0\}$ and $S_k=\{j:i_j=k\}$.  
%Let $m=|S_k|$, so that $|S_0|<m$.  
%For 
%%distinct elements 
%%$a_1,\dots,a_{m}\in S$, let $[a_1,\dots,a_{m}]$ 
%a subset $T\subset S$ such that $|T|=|S_k|$, let $x_T$ denote the element $x_{i'_1}\cdots x_{i'_n}$, where 
%\begin{equation*}
%i'_j = \left\{
%\begin{array}{ll}
%0 & \mbox{if $j\in S-T$;}\\
%k & \mbox{if $j\in T$; and}\\
%i_j & \mbox{otherwise.}
%\end{array}
%\right.
%\end{equation*}
%with $i'_j=i_j$ if $j\notin S$, and with all other $i'_j=0$.  By an inclusion-exclusion argument similar to the one 
%above, we are reduced to showing
Then, by the inclusion-exclusion principle, we have
\begin{equation}
\begin{array}{rcl}
x_{i_1}\cdots x_{i_n}\;\;=\;\;
\;\displaystyle\prod_{j}x_{i_j}^{(j)}&=&
\displaystyle\sum_{U\subset S_0}(-1)^{|U|}\!
\displaystyle\sum_{\substack{U\subset T\subset S\\|T|=|S_k|}}
\;\prod_{j\in T}x_k^{(j)}\displaystyle\prod_{j\notin S} x_{i_j}^{(j)}\\[.36in]
&=&\displaystyle\sum_{U\subset S_0}(-1)^{|U|} \gamma_{|S_k|-|U|}\Bigl(x_k,\prod_{j\in U}x_k^{(j)}
\displaystyle\prod_{j\notin S} x_{i_j}^{(j)}\Bigr)\,.
\end{array}
\end{equation}
Hence $x_{i_1}\cdots x_{i_n}\in I$.
%\begin{equation}
%x_{i_1}\cdots x_{i_n}\;\;=\;\;\sum_{U\subset S_0}(-1)^{|U|}\sum_{\substack{U\subset T\subset S\\|T|=|S_k|}}
%x_T\,.
%\end{equation}
%\[
%\sum_{\substack{a_1<\dots<a_c\\ a_j\in S- \{b_j\}}}[a_1,\dots, a_c,b_1,\dots, b_{|T|-c}]\in I,
%\]
%We claim that the inner sum is in $I$ for each subset $U\subset S_0$, thus implying that $x_{i_1}\cdots x_{i_n}\in I$.  To see that 
%where $1\leq c< |T|$ and the $b_j$ are fixed elements of $S$.  This sum equals 
%$\alpha\cdot x_{i'_1}\cdots x_{i'_n}$, where $i'_j=i_j$ if $j\notin S-\{b_\ell\}$, and $i_j=0$ otherwise; 
%here $\alpha$ is the sum of all $x_{i''_1}\cdots x_{i''_n}$ with exactly $|T|-c$ of the $i''_j=k$.  Since $|T|-c>0$, 
%we see $\alpha\in I$, and hence the sum is as well.
\end{proofl}

We see from Lemma \ref{l:incexc} that
\[
G(R_n/K)=\bigoplus_{\substack{a\textrm{\ s.t.}\\ a_0\geq a_k\textrm{\ }\forall k}} M_a/I_a.
\]
If $\sigma$ is a permutation of $\{0,1,\dots,n-1\}$, and $a=(a_0,\dots,a_{n-1})$ is an ordered partition of $n$, 
then let $\sigma(a):=(a_{\sigma^{-1}(0)},\dots,a_{\sigma^{-1}(n-1)})$.  Note that if $\sigma$ fixes $0$, then it 
defines an isomorphism of $S_n$-representations $M_a\to M_{\sigma(a)}$ 
by sending $x_{i_1}\cdots x_{i_n}$ to $x_{\sigma(i_1)}\cdots x_{\sigma(i_n)}$.  We remark that if $\sigma$ 
does not fix $0$, it still defines an isomorphism of vector spaces, but this is in general not an isomorphism of 
$S_n$-representations.  
%For example, if $a_0\geq a_k$ for all $k$ and $a_0> a_{\sigma^{-1}(0)}$, then 
%Lemma \ref{l:incexc} shows that $M_{\sigma(a)}=I_{\sigma(a)}$; however, it follows from $\eqref{eq:star}$ and
%Proposition \ref{prop:proper} below that $I_a$ is a $\emph{proper}$ subrepresentation of $M_a$.

Now let $a=(a_0,\dots,a_{n-1})$ be an ordered partition of $n$ such that 
$a_0\geq a_k$ for all $k$.  For all $j\in\{0,\ldots, n-1\}$, let 
$k_j=|\{i:i\neq0,\;a_i=j\}|$.  Then $\{\sigma(a):\sigma(0)=0\}$ has cardinality 
$\binom{n-1}{k_0;\dots;k_{n-1}}=m_{\lambda(a)}$, where $\lambda(a):=(\lambda_1,\dots,\lambda_n)$ is the (unordered) partition of 
$n$ such that $\{\lambda_i:1\leq i\leq n\}=\{a_i:0\leq i\leq n-1\}$ as multisets.

\begin{definition}
\emph{
For any partition $\lambda$ of $n$, let $M_\lambda$ and $I_\lambda$ denote the isomorphism classes of the 
$S_n$-representations $M_a$ and $I_a$, respectively, where $a=(a_0,\dots,a_{n-1})$ is any ordered partition of $n$ such that $a_0=\lambda_1$ and $\{\lambda_i\}=\{a_i\}$ 
as multisets.  This is well-defined as $\lambda(a)=\lambda(\sigma(a))$ for all $\sigma$ fixing $0$.
}
\end{definition}

Since each partition $\lambda$ corresponds to $m_\lambda$ ordered partitions $a$ in the above definition, we obtain
%Since $a\mapsto\lambda(a)$ gives a bijection of $\{a:a_0\geq a_k\textrm{\ }\forall k\}$ with the set of partitions of $n$, 
%we have 
%shown

\begin{proposition}
For all partitions $\lambda$ of $n$, let $m_\lambda$ be the
multinomial coefficient $\binom{n-1}{k_0;\dots;k_{n-1}}$, where
$k_j=|\{i:i\neq1,\lambda_i=j\}|$.  Then there is an isomorphism
\[
G(R_n/K)\cong\bigoplus_\lambda m_\lambda M_\lambda/I_\lambda
\]
of $S_n$-representations.
\end{proposition}

Given a partition $\lambda$ of $n$, let $i_j=k$ if $\sum_{m=1}^{k-1} \lambda_m < j \leq \sum_{m=1}^k \lambda_m$.  
%so that $i_1=\dots=i_{\lambda_1}=0$, $i_{\lambda_1+1}=\dots=i_{\lambda_1+\lambda_2}=1$, and so on.  (GET RID OF "so that... and so on"??)  
Then note that
\[
M_\lambda=\Ind_{S_{\lambda_1}\times\cdots\times S_{\lambda_n}}^{S_n}(K\cdot x_{i_1}\cdots x_{i_n}).
\]
Since $K\cdot x_{i_1}\cdots x_{i_n}$ is the trivial representation of $S_{\lambda_1}\times\cdots\times S_{\lambda_n}$, by 
Young's Rule we have
\begin{equation}
\label{eq:star}
M_\lambda\cong\bigoplus_{\mu\tr\lambda}K_{\mu\lambda}V_\mu,
\end{equation}
where $\lambda$ runs through the partitions of $n$.  We have therefore reduced Theorem $\ref{thm:main}$ to the following:
%theorem.

\begin{theorem}
\label{thm:ideal}
For all partitions $\lambda$ of $n$, we have
\[
I_\lambda\cong\bigoplus_{\substack{\mu\tr\lambda\\ \mu_1>\lambda_1}} K_{\mu\lambda}V_\mu
\]
as $S_n$-representations.
\end{theorem}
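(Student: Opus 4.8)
\medskip
\noindent\textbf{Proof proposal.} The plan is to establish the equivalent statement
\[
M_\lambda/I_\lambda \;\cong\; \bigoplus_{\substack{\mu\tr\lambda\\ \mu_1=\lambda_1}} K_{\lambda\mu}V_\mu ,
\]
which yields Theorem~\ref{thm:ideal} because $M_\lambda\cong\bigoplus_{\mu\tr\lambda}K_{\lambda\mu}V_\mu$ by \eqref{eq:star} and $K[S_n]$ is semisimple in our characteristic. One inclusion is immediate. Recall $I_\lambda=\sum_j\operatorname{im}(\theta_j)$, where for each $j$ with $\lambda_{j+1}\geq1$ the $S_n$-equivariant map $\theta_j\colon M_{b^{(j)}}\to M_\lambda$ is $m\mapsto\gamma(x_j,m)$, and $b^{(j)}$ is the composition $(\lambda_1{+}1,\lambda_2,\dots,\lambda_{j+1}{-}1,\dots,\lambda_n)$, whose underlying partition $\rho^{(j)}$ has first part $\lambda_1+1$. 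By Young's Rule $M_{b^{(j)}}\cong\bigoplus_{\sigma\tr\rho^{(j)}}K_{\rho^{(j)}\sigma}V_\sigma$, and $\sigma\tr\rho^{(j)}$ forces $\sigma_1\geq\lambda_1+1$; hence each $\operatorname{im}(\theta_j)$, being a quotient of $M_{b^{(j)}}$, involves only irreducibles $V_\mu$ with $\mu_1>\lambda_1$, and so does $I_\lambda$. As $I_\lambda\subseteq M_\lambda$, this shows the multiplicity of $V_\mu$ in $I_\lambda$ is at most $K_{\lambda\mu}$, and equals $0$ whenever $\mu_1\le\lambda_1$.

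For the reverse direction I would compute the $S_n$-character of $G(R_n/K)$ a second way. Identify the underlying $K$-vector space of $R_n$ with $V:=K^n$ via the basis $x_0:=1,x_1,\dots,x_{n-1}$; then multiplication by $x_k$ on $V$ is the matrix unit $E_{k0}$, so multiplication by $\gamma(x_k,1\cdots1)=\sum_i x_k^{(i)}$ on $R_n^{\otimes n}=V^{\otimes n}$ is exactly the natural action of $E_{k0}\in\mathfrak{gl}(V)$. Since in characteristic $0$ or $p>n$ the ideal $I(R_n,K)$ is generated by the $\gamma(x_k,1\cdots1)$ (the basis vector $a=1$ gives only trivial relations, and for fixed $k$ the higher relations are scalar multiples of powers of $\gamma(x_k,1\cdots1)$), we get $I(R_n,K)=\mathfrak n^-\cdot V^{\otimes n}$, where $\mathfrak n^-=\operatorname{span}_K\{E_{k0}:1\le k\le n-1\}$ is the nilradical of a maximal parabolic of $\mathfrak{gl}_n$ with Levi $\mathfrak{gl}_1\times\mathfrak{gl}_{n-1}$. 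Thus $G(R_n/K)$ is the space of $\mathfrak n^-$-coinvariants of $V^{\otimes n}$; by Schur--Weyl duality $V^{\otimes n}\cong\bigoplus_\nu\mathbb{S}_\nu(V)\otimes V_\nu$ as $\mathfrak{gl}_n\times S_n$-modules, so $G(R_n/K)\cong\bigoplus_\nu(\mathbb{S}_\nu V)_{\mathfrak n^-}\otimes V_\nu$ as $S_n$-modules. A highest-weight argument identifies $(\mathbb{S}_\nu V)_{\mathfrak n^-}$ with the top graded piece of $\mathbb{S}_\nu(V)$ for the grading by the number of $x_0$-factors ($\mathfrak n^-$ strictly lowers this grading, and $\mathbb{S}_\nu(V)$ is generated over $\mathfrak n^-$ by that piece), and this piece is $\cong\mathbb{S}_{\bar\nu}(K^{n-1})$ with $\bar\nu=(\nu_2,\nu_3,\dots)$. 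Hence the multiplicity of $V_\nu$ in $G(R_n/K)$ equals $\dim_K\mathbb{S}_{\bar\nu}(K^{n-1})$, i.e.\ the number of semistandard tableaux of shape $\bar\nu$ with entries in $\{1,\dots,n-1\}$.

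Comparing with the already-established $G(R_n/K)\cong\bigoplus_\lambda m_\lambda\,M_\lambda/I_\lambda$, and using the first paragraph (so the multiplicity of $V_\nu$ in $M_\lambda/I_\lambda$ is $0$ for $\nu_1<\lambda_1$ and $K_{\lambda\nu}$ for $\nu_1=\lambda_1$), write $c_{\lambda,\nu}$ for the multiplicity of $V_\nu$ in $M_\lambda/I_\lambda$; then for every $\nu$,
\[
\sum_{\lambda:\ \lambda_1=\nu_1} m_\lambda K_{\lambda\nu}\;+\;\sum_{\lambda:\ \lambda_1<\nu_1} m_\lambda\, c_{\lambda,\nu}\;=\;\dim_K\mathbb{S}_{\bar\nu}(K^{n-1}).
\]
It then remains to prove the purely combinatorial identity $\sum_{\lambda:\,\lambda_1=\nu_1}m_\lambda K_{\lambda\nu}=\dim_K\mathbb{S}_{\bar\nu}(K^{n-1})$: unwinding $m_\lambda$ as the number of length-$(n-1)$ compositions of $n-\lambda_1$ sorting to $(\lambda_2,\dots,\lambda_n)$, and noting $K_{\lambda\nu}=K_{\bar\lambda\bar\nu}$ when $\lambda_1=\nu_1$ (all $1$'s of such a tableau must fill its first row), both sides count semistandard tableaux of shape $\bar\nu$ with entries in $\{1,\dots,n-1\}$, grouping those on the combinatorial side by content. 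Granting the identity, the displayed equation forces $\sum_{\lambda:\lambda_1<\nu_1}m_\lambda\,c_{\lambda,\nu}=0$; all terms being non-negative and each $m_\lambda>0$, we conclude $c_{\lambda,\nu}=0$ whenever $\nu_1>\lambda_1$. Together with the first paragraph this gives $M_\lambda/I_\lambda\cong\bigoplus_{\mu\tr\lambda,\ \mu_1=\lambda_1}K_{\lambda\mu}V_\mu$, proving Theorem~\ref{thm:ideal}.

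The step I expect to be the main obstacle is the second, independent computation of $G(R_n/K)$ — concretely, the identification of the parabolic coinvariants $(\mathbb{S}_\nu V)_{\mathfrak n^-}$ together with the combinatorial identity above; the remainder is bookkeeping. An alternative route, staying inside $S_n$-representation theory, is to invoke James's basis of $\Hom_{S_n}(V_\mu,M_\lambda)$ by semistandard homomorphisms and show that every such homomorphism with $\mu_1>\lambda_1$ factors through one of the $\theta_j$; establishing that factorization (a tableau-combinatorial lemma exploiting that, when $\mu_1>\lambda_1$, the first row of a semistandard tableau of shape $\mu$ and content $\lambda$ cannot be all $1$'s) would be the crux of that approach.
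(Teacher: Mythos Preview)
Your argument is correct, but the route you take for the ``hard'' inclusion differs substantially from the paper's.

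Your first paragraph --- observing that each $\theta_j$ has source $M_{b^{(j)}}$, whose irreducible constituents all satisfy $\sigma_1\ge\lambda_1+1$ by Young's Rule, so that $I_\lambda$ involves only such $V_\mu$ --- is exactly the paper's Proposition~\ref{prop:proper}. That direction is identical.

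For the reverse inclusion the paper stays entirely inside $S_n$-representation theory: it invokes the basis of $\Hom_{S_n}(V_\mu,M_\lambda)$ by semistandard homomorphisms (precisely the ``alternative route'' you sketch at the end) and checks directly that for each semistandard tableau $T$ of shape $\mu$ and content $\lambda$ with $\mu_1>\lambda_1$, the associated generator $\sum_{T'\in A_T}\alpha(T')$ of the image factors as a product of $\gamma(x_m,1\cdots1)$'s and hence lies in $I_\lambda$ (Proposition~\ref{prop:first}). This is short and elementary. Your route instead exploits the hidden $\mathfrak{gl}_n$-structure: identifying $I(R_n,K)$ with $\mathfrak n^-\cdot V^{\otimes n}$, applying Schur--Weyl, and computing parabolic coinvariants to obtain the global multiplicity of each $V_\nu$ in $G(R_n/K)$, then forcing the local statement by a counting argument. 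The payoff is the closed formula $\dim_K\mathbb S_{\bar\nu}(K^{n-1})$ for that multiplicity, which is attractive in its own right and makes the combinatorics of Theorem~\ref{thm:main} transparent; the cost is heavier machinery, and the step $(\mathbb S_\nu V)_{\mathfrak n^-}\cong\mathbb S_{\bar\nu}(K^{n-1})$ deserves one sentence more than ``a highest-weight argument.'' A clean justification: $W:=\mathbb S_\nu V$ is generated over $U(\mathfrak n^-_{\mathrm{full}})$ by its highest weight vector $v_\nu\in W_{\nu_1}$; since $\mathfrak n^-$ is an ideal in $\mathfrak n^-_{\mathrm{full}}$ one has $W=U(\mathfrak n^-)\cdot U(\mathfrak n^-_{\mathfrak l})v_\nu=U(\mathfrak n^-)\cdot W_{\nu_1}$, whence $\mathfrak n^-W=\bigoplus_{d<\nu_1}W_d$ as needed.
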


To prove Theorem \ref{thm:ideal}, we show that $I_\lambda$ contains 
%a copy of 
%$\bigoplus_{\mu\tr\lambda,\;\mu_1>\lambda_1} K_{\mu\lambda}V_\mu$ 
$K_{\mu\lambda}$ copies of $V_\mu$ if $\mu\tr\lambda$ and $\mu_1>\lambda_1$,
and that it contains no copy of 
$V_\mu$ if $\mu\tr\lambda$ but $\mu_1=\lambda_1$.  These two statements are the content of 
Propositions~$\ref{prop:first}$ and $\ref{prop:proper}$, respectively.
\begin{proposition}
\label{prop:first}
If $\lambda$ and $\mu$ are partitions of $n$ with $\mu_1>\lambda_1$, then the natural morphism of $S_n$-representations
\[
\Hom(V_\mu,I_\lambda)\longrightarrow\Hom(V_\mu,M_\lambda)
\]
is an isomorphism.
\end{proposition}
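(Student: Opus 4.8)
Since the map $\Hom(V_\mu,I_\lambda)\to\Hom(V_\mu,M_\lambda)$ is induced by the inclusion $I_\lambda\hookrightarrow M_\lambda$, it is automatically injective (if $V_\mu\to I_\lambda\hookrightarrow M_\lambda$ is zero, so is $V_\mu\to I_\lambda$). Moreover, as $K$ has characteristic $0$ or $>n$, the algebra $K[S_n]$ is semisimple, so the inclusion splits and $\Hom(V_\mu,M_\lambda)$ has dimension $K_{\lambda\mu}$. Thus the proposition is equivalent to the assertion that $V_\mu$ does not occur in the quotient $M_\lambda/I_\lambda$ when $\mu_1>\lambda_1$, or equivalently that the full $V_\mu$-isotypic component of $M_\lambda$ already lies in $I_\lambda$. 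This is what I would try to prove.

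\textbf{Step 1 (describe $I_\lambda$ as a sum of images).} As in the proof of Theorem~\ref{thm:functorial}, $I_\lambda$ is spanned by the elements $\gamma(x_c,m)=m\cdot\sum_\ell x_c^{(\ell)}$ (for $c\ge 1$ and $m$ a basis monomial) that lie in $M_\lambda$. Since $\gamma(x_c,m)$ replaces one tensor factor equal to $1$ of $m$ by $x_c$, membership in $M_\lambda$ forces $m$ to have ``type'' $\nu$ obtained from $\lambda$ by moving one box from a lower row into the first row, so $\nu\triangleright\lambda$ and $\nu_1=\lambda_1+1$. Grouping these generators by the colour being spread gives $S_n$-equivariant ``spreading'' maps $\gamma_c\colon M_\nu\to M_\lambda$, $g\mapsto g\cdot\sum_\ell x_c^{(\ell)}$, and $I_\lambda=\sum_c\textrm{Im}(\gamma_c)$. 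Applying Young's Rule (Theorem~\ref{thm:young}) to $M_\nu\cong\bigoplus_{\rho\triangleright\nu}K_{\nu\rho}V_\rho$ and using $\rho\triangleright\nu$, hence $\rho_1\ge\nu_1=\lambda_1+1$, already shows that $I_\lambda$ contains no $V_\mu$ with $\mu_1=\lambda_1$ (this is the direction relevant to Proposition~\ref{prop:proper}). The content of the present proposition is the opposite direction: that these images together exhaust all $K_{\lambda\mu}$ copies of $V_\mu$ once $\mu_1>\lambda_1$.

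\textbf{Step 2 (a usable lower bound on the images).} Introduce the adjoint ``contraction'' maps $\delta_c\colon M_\lambda\to M_\nu$ sending a monomial to the sum, over its factors equal to $x_c$, of the monomial with that factor replaced by $1$; this is again $S_n$-equivariant. A direct computation gives $\gamma_c\delta_c=a_c\cdot\id_{M_\lambda}+T_c$, where $a_c$ is the multiplicity of $x_c$ in a monomial of $M_\lambda$ and $T_c$ is the self-adjoint $S_n$-equivariant ``swap a $1$ and an $x_c$'' operator on $M_\lambda$ (one similarly has $\delta_c\gamma_c=(\lambda_1+1)\cdot\id+T_c$ on $M_\nu$). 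Hence $\textrm{Im}(\gamma_c)\supseteq\textrm{Im}(a_c\cdot\id+T_c)$ and $I_\lambda\supseteq\sum_c\textrm{Im}(a_c\cdot\id_{M_\lambda}+T_c)$. Decomposing into isotypic components and writing the $V_\mu$-component of $M_\lambda$ as $V_\mu\otimes U$ with $\dim U=K_{\lambda\mu}$, each $T_c$ acts as $\id_{V_\mu}\otimes\tau_c$ with $\tau_c\in\End(U)$, and the proposition reduces to: for $\mu_1>\lambda_1$ the operators $a_c\cdot\id_U+\tau_c$ have no common kernel, i.e.\ $\sum_c\textrm{Im}(a_c\cdot\id_U+\tau_c)=U$. (One checks this cannot be achieved by a single $\gamma_c$ in general — e.g.\ for $\lambda=(1^n)$ — so combining all the colours is essential.)

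\textbf{Step 3 (the main obstacle).} The crux is ruling out a common null vector in $U$, i.e.\ a vector on which $\tau_c$ acts by the scalar $-a_c$ for every colour $c$. I expect essentially all the difficulty of the proposition to sit here. The plan is a downward induction on the dominance order in $\lambda$: the base case $\lambda=(n)$ is trivial ($I_{(n)}=0$), and for $\nu\triangleright\lambda$ with $\nu_1=\lambda_1+1$ one may invoke the proposition (equivalently Theorem~\ref{thm:main}) already known for $\nu$, which determines $\ker\gamma_c$ and the multiplicities $K_{\nu\mu}$, hence constrains the spectrum of $\tau_c$ enough to force that the simultaneous eigenvalue $(-a_c)_c$ cannot occur. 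A more explicit alternative is to pass to James's semistandard basis of $\Hom(V_\mu,M_\lambda)=\Hom(S^\mu,M^\lambda)$ indexed by semistandard $\mu$-tableaux of content $\lambda$, and to show that when $\mu_1>\lambda_1$ every such semistandard homomorphism factors through some $\gamma_c$; concretely this is a Garnir/straightening computation showing that the box of the tableau lying outside the first $\lambda_1$ columns can be ``absorbed'' by a spreading relation. In either route the representation-theoretic scaffolding (semisimplicity, Young's Rule, the $\gamma_c$–$\delta_c$ identities) is routine, and what remains is a single combinatorial lemma about Kostka numbers and polytabloids, which is the real content of Theorem~\ref{thm:ideal}.
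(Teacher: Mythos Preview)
Your proposal has a genuine gap: Step~3, which you yourself call ``the main obstacle,'' is not carried out. You outline two possible routes (downward induction on dominance, or the semistandard basis plus Garnir/straightening) but complete neither; the final sentence explicitly defers the ``real content'' to an unproven combinatorial lemma. Everything up to that point is scaffolding, so as written the proof does not establish the proposition.

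The paper's argument is essentially your ``explicit alternative,'' but it is much simpler than you anticipate --- no Garnir relations, no straightening, no induction. Citing Sagan, the image of any $V_\mu\to M_\lambda$ lies in the $S_n$-span of the row-symmetrized elements $\sum_{T'\in A_T}\alpha(T')$, where $A_T$ is the set of tableaux row-equivalent to a fixed semistandard $T$ of shape $\mu$ and content $\lambda$. Now partition $A_T$ by the content of rows $2,3,\ldots$ (equivalently, declare $T'\sim T''$ iff they agree outside the first row). Summing $\alpha(T')$ over a single block factors as
\[
\Bigl(\prod_{m\geq 1}\gamma(x_m,1\cdots 1)^{a_m}\Bigr)\cdot\bigl(\underbrace{1\cdots1}_{\mu_1}\,x_{*}\cdots x_{*}\bigr),
\]
where the $a_m$ record the first-row content. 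Since $\mu_1>\lambda_1$, the first row of $T$ cannot consist entirely of $1$'s, so some $a_m>0$ and the displayed product lies in $I_\lambda$. That is the whole argument: one equivalence relation and a one-line factorization.

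By contrast, your spectral approach via the operators $a_c\cdot\id+\tau_c$ on the multiplicity space $U$ is correct as a reduction but is harder to finish: ruling out the simultaneous eigenvalue $(-a_c)_c$ requires exactly the kind of joint-spectrum information you do not supply, and your proposed induction on $\lambda$ does not obviously produce it. The paper's direct factorization avoids this entirely.
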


\begin{proof}
  Given a semi-standard Young tableau $T$ of shape $\mu$ and content
  $\lambda$, if $i=j+\sum_{m=1}^{k-1}\lambda_m <
  \sum_{m=1}^{k}\lambda_m$ for $j>0$, then let $T(i)$ be the number
  assigned to the $j^{th}$ box on the $k^{th}$ row of $T$.  For
  example, $T(\lambda_1+1)$ is the number assigned to the first box of
  the second row.  We can associate to $T$ an element
  $\alpha(T):=x_{T(1)-1}\cdots x_{T(n)-1}$ of $M_\lambda$.  Let $A_T$
  be the set of Young tableaux $T'$ of shape $\mu$ and content
  $\lambda$ such that for all $i$, the multiset of numbers in the
  $i^{th}$ row of $T'$ is the same as the multiset of numbers in the
  $i^{th}$ row of $T$.  Then by \cite[2.10.1]{sagan}, the image of any
  morphism $V_\mu\to M_\lambda$ of $S_n$-representations is
  contained in the $S_n$-subspace of $M_\lambda$ generated by the
  elements $\sum_{T'\in A_T} \alpha(T')$ as $T$ ranges over the
  semi-standard Young tableaux of shape $\mu$ and content $\lambda$.

  It therefore suffices to show $\sum_{T'\in A_T} \alpha(T')\in
  I_\lambda$ for every semi-standard Young tableau $T$ of shape $\mu$
  and content $\lambda$.  We define an equivalence relation on $A_T$
  by $T'\sim T''$ if $T'(i)=T''(i)$ for all $i>\mu_1$.  This
  equivalence relation partitions $A_T$ into a disjoint union of
  sets $S_1, S_2,\dots,S_\ell$.  For $i>\mu_1$, let $S_j(i)=T'(i)$ for
  any $T'\in S_j$.  Since
\[
\sum_{T'\in A_T}\alpha(T')=\sum_{j=1}^\ell\sum_{T'\in S_j}\alpha(T'),
\]
it suffices to show each $\sum_{T'\in S_j}\alpha(T')\in I_\lambda$.  Note that
\[
\sum_{T'\in S_j}\alpha(T')=\delta\cdot (\underbrace{11\cdots1}_{\mu_1}x_{S_j(\mu_1+1)-1}\cdots x_{S_j(n)-1}),
\]
where $\delta$ is the sum of all elements of the form $x_{i_1}\cdots
x_{i_n}$ with
\[
\{i_k\}=\{T(k)-1:1\leq k\leq \mu_1\}\cup\{\underbrace{0,0,\dots,0}_{n-\mu_1}\}
\]
as multisets.  Letting $a_m=|\{k:i_k=m\}|$ and noting that there is
some $m\neq0$ for which $a_m>0$, we see that
\[
\delta=\prod_{m=1}^{n-1}\gamma_{a_m}(x_m,11\cdots1)\in I_\lambda,
%.\qedhere
\]
which finishes the proof.
\end{proof}
\begin{proposition}
\label{prop:proper}
If $\mu\tr\lambda$ and $\mu_1=\lambda_1$, then $V_\mu$ does not occur
in $I_\lambda$.
\end{proposition}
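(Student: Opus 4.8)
The plan is to prove that $I_\lambda$ is built, as an $S_n$-module, out of permutation modules whose associated partitions all have first part strictly larger than $\lambda_1$; by Young's Rule this forces every irreducible constituent $V_\mu$ of $I_\lambda$ to have $\mu_1>\lambda_1$, which is the assertion. To this end, realize $M_\lambda$ concretely as $M_a$ for a composition $a=(a_0,\dots,a_{n-1})$ of $n$ with $a_0=\lambda_1$ and $\{a_i\}=\{\lambda_i\}$ as multisets, so that $a_0=\max_i a_i$, and recall that $I_\lambda=I_a$ is the $S_n$-subrepresentation of $M_a$ generated by those $\gamma(x_m,x_{i_1}\cdots x_{i_n})$ that lie in $M_a$. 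The crucial elementary observation is that, since $x_ix_k=0$ in $R_n$ for all $i,k\geq 1$, the element $\gamma(x_m,x_{i_1}\cdots x_{i_n})$ equals the sum of the monomials obtained from $x_{i_1}\cdots x_{i_n}$ by turning one of its $0$-entries into an $m$. Hence $\gamma(x_m,x_{i_1}\cdots x_{i_n})$ belongs to $M_a$ exactly when $x_{i_1}\cdots x_{i_n}$ has content $a^{(m)}$, the composition gotten from $a$ by replacing $a_0$ with $a_0+1$ and $a_m$ with $a_m-1$ (so this requires $a_m\geq 1$). Thus, for each $m$ with $a_m\geq 1$, the rule $w\mapsto\gamma(x_m,w)$ defines a $K$-linear map $\gamma_m\colon M_{a^{(m)}}\to M_a$, and $I_\lambda=\sum_m\mathrm{im}(\gamma_m)$.

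Two easy checks make this presentation rigorous. First, each $\gamma_m$ is $S_n$-equivariant: this is a short computation tracking how a permutation of the tensor factors moves the $0$-entries of a monomial, entirely parallel to the displayed formula for $\pi\bigl(\gamma(x_k,x_{i_1}\cdots x_{i_n})\bigr)$ stated just before Theorem~\ref{thm:main}. Consequently each $\mathrm{im}(\gamma_m)$ is an $S_n$-submodule of $M_a$, and the $S_n$-subrepresentation generated by the stated generators is precisely $\sum_m\mathrm{im}(\gamma_m)$, not anything larger.

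Now the conclusion. Because $a_0=\max_i a_i=\lambda_1$, the composition $a^{(m)}$ satisfies $a^{(m)}_0=\lambda_1+1=\max_i a^{(m)}_i$, so $M_{a^{(m)}}$ is $S_n$-isomorphic to $M_{\lambda^{(m)}}$ for a partition $\lambda^{(m)}$ of $n$ with $\lambda^{(m)}_1=\lambda_1+1$. By Young's Rule (Theorem~\ref{thm:young}) every $V_\nu$ occurring in $M_{\lambda^{(m)}}$ has $\nu\tr\lambda^{(m)}$, hence $\nu_1\geq\lambda_1+1$. Since $\mathrm{char}\,K=0$ or $p>n$, the algebra $K[S_n]$ is semisimple, so the irreducible constituents of the quotient $\mathrm{im}(\gamma_m)$ of $M_{\lambda^{(m)}}$ are among those of $M_{\lambda^{(m)}}$; and $I_\lambda=\sum_m\mathrm{im}(\gamma_m)$ is a quotient of $\bigoplus_m\mathrm{im}(\gamma_m)$, so every $V_\nu$ occurring in $I_\lambda$ satisfies $\nu_1>\lambda_1$. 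In particular, if $\mu\tr\lambda$ and $\mu_1=\lambda_1$, then $V_\mu$ does not occur in $I_\lambda$.

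The proof is short, and I do not expect a serious obstacle; the care is all in the bookkeeping of the first two paragraphs — identifying exactly when $\gamma(x_m,w)\in M_a$, verifying $S_n$-equivariance of $\gamma_m$, and confirming that the images of the $\gamma_m$ fill out all of $I_\lambda$. Once that is set up, the representation-theoretic punchline (moving from $a$ to $a^{(m)}$ strictly increases the largest part past $\lambda_1$, and $I_\lambda$ can only inherit constituents of that type) is immediate. An alternative route, mirroring the proof of Proposition~\ref{prop:first}, would use the semistandard-homomorphism description of \cite{sagan} and show directly that no nonzero combination of the generators $\sum_{T'\in A_T}\alpha(T')$ with $T$ of shape $\mu$ and $\mu_1=\lambda_1$ lies in $I_\lambda$; but the ``count the zeros'' argument above is cleaner and avoids that computation.
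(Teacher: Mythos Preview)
Your proof is correct and follows essentially the same route as the paper's: both arguments observe that $I_\lambda$ is spanned by the $S_n$-submodules $\Gamma_m=\mathrm{im}(\gamma_m)$, each of which is a quotient of the permutation module $M_{\lambda^{(m)}}$ for a partition with first part $\lambda_1+1$, and then invoke Young's Rule to conclude that every constituent $V_\nu$ of $I_\lambda$ has $\nu_1>\lambda_1$. The only cosmetic difference is that the paper phrases $\Gamma_m$ directly as an induced module $\Ind_{S_{\lambda'_1}\times\cdots\times S_{\lambda'_n}}^{S_n}(\triv)$, whereas you realize it as the image of an explicit equivariant map and appeal to semisimplicity; the substance is the same.
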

\begin{proof}
  Let $\Gamma_m$ be the subrepresentation of $I_\lambda$ generated by
  the $\gamma(x_m,x_{i_1}\cdots x_{i_n})\in I_\lambda$.  Let $\ell$ be
  the smallest integer greater than or equal to $m$ such that
  $\lambda_m=\lambda_\ell>\lambda_{\ell+1}$.  If $\lambda_m=\lambda_j$
  for all $j\geq m$, then let $\ell=n$.  We define
\[
\lambda'=(\lambda'_1,\ldots,\lambda'_n)=(\lambda_1+1,\lambda_2,\dots,\lambda_{\ell-1}, \lambda_\ell-1,\lambda_{\ell+1},\dots,\lambda_n).
\]
Let $i_j = m$ if $\sum_{b=1}^{m} \lambda'_b < j \leq \sum_{b=1}^{m+1} \lambda'_b$.  Then note that
\[
\Gamma_m=\Ind_{S_{\lambda'_1}\times\cdots\times S_{\lambda'_n}}^{S_n} (K\cdot\gamma(x_m,x_{i_1}\cdots x_{i_n})).
\]
Since $K\cdot\gamma(x_m,x_{i_1}\cdots x_{i_n})$ is the trivial representation of $S_{\lambda'_1}\times\cdots\times S_{\lambda'_n}$, 
Young's Rule tells us that
\[
\Gamma_m=\bigoplus_{\epsilon\tr\lambda'}K_{\epsilon\lambda'}V_\epsilon.
\]
Since $\lambda'_1=\lambda_1+1$, any $\epsilon$ which dominates
$\lambda'$ must have $\epsilon_1>\lambda_1$.  Therefore $V_\mu$ does
not occur in any of the $\Gamma_m$, and since $I_\lambda$ is the
$K$-vector space span of the $\Gamma_m$, we conclude that $V_\mu$ does not occur in $I_\lambda$.
\end{proof}

This concludes the proof of Theorem $\ref{thm:ideal}$, and hence also
of Theorem \ref{thm:main}.  We now turn to the following theorem, which implies 
Theorem $\ref{degtheorem}$.

\begin{theorem}
\label{cor:reg}
The regular representation is a subrepresentation of $G(R_n/K)$.  If $n\geq4$, it is a proper subrepresentation.  %In particular, 
%Theorem $\ref{degtheorem}$ follows.
%As a result, $\dim_K G(R_n/K)\geq n!$ and this inequality is strict if $n\geq4$.
\end{theorem}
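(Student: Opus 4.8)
The plan is to read off both assertions directly from the structure theorem (Theorem~\ref{thm:main}), using the representation-theoretic dictionary set up in this section. First I would show that the regular representation $K[S_n]$ embeds in $G(R_n/K)$. Recall from Young's Rule applied to $\lambda=(1,1,\dots,1)$ that $K[S_n]=\bigoplus_\mu (\dim V_\mu)\,V_\mu$, and that $K_{(1^n)\mu}=\dim V_\mu$ for every partition $\mu$ of $n$. On the other hand, Theorem~\ref{thm:main} gives
\[
G(R_n/K)\cong\bigoplus_\lambda\;\bigoplus_{\substack{\mu\tr\lambda\\ \mu_1=\lambda_1}} m_\lambda K_{\lambda\mu}V_\mu.
\]
So it suffices to exhibit, for each $\mu$, a single partition $\lambda$ contributing at least $\dim V_\mu$ copies of $V_\mu$; the natural candidate is $\lambda=(1^n)$ itself, for which $\lambda_1=1=\mu_1$ forces $\mu=(1^n)$ — that only handles the sign-type term, so instead I would take $\lambda=\mu$. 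Then $\mu\tr\mu$, $\mu_1=\mu_1$ trivially, $K_{\mu\mu}=1$, and $m_\mu\geq 1$, so $V_\mu$ occurs in $M_\mu/I_\mu$, hence in $G(R_n/K)$, with multiplicity at least $m_\mu\geq 1$. This shows every irreducible $V_\mu$ appears in $G(R_n/K)$, but to get the \emph{regular} representation I need multiplicity at least $\dim V_\mu$ for each $\mu$. For that I would instead sum the $\lambda=\mu$ contribution against all other contributions, or — cleaner — observe that taking $\lambda=(1^n)$ is not the only way $V_\mu$ arises, and track multiplicities: the total multiplicity of $V_\mu$ in $G(R_n/K)$ is $\sum_{\lambda:\,\mu\tr\lambda,\,\mu_1=\lambda_1} m_\lambda K_{\lambda\mu}$, and I would argue this is $\geq\dim V_\mu=K_{(1^n)\mu}$ by a combinatorial comparison, the key point being that for $\lambda=(1^n)$ one has $\lambda_1=1$, which matches $\mu_1=1$ only for $\mu=(1^n)$ — so the honest argument must use that $G(R_n/K)$ surjects onto, or contains, the piece coming from $A=K^n$ (i.e. $G(K^n/K)=K^{n!}=K[S_n]$) via a degeneration/specialization map furnished by Theorem~\ref{maxrank} and its semicontinuity proof in Section~\ref{sec:maxrank}. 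Concretely: $R_n$ is the maximally degenerate point, $K^n$ lies in the closure of its $\GL_n$-orbit's complement, and upper semicontinuity of fiber dimension for the sheaf $\mathcal F_n$ gives a specialization $G(K^n/K)\rightsquigarrow G(R_n/K)$ realizing $K[S_n]$ as a subrepresentation. I would phrase this last step using the $S_n$-equivariance of $\mathcal F_n$.

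For the strictness when $n\geq 4$, I would simply compare dimensions. From Theorem~\ref{thm:main},
\[
\dim_K G(R_n/K)=\sum_{\substack{\mu\tr\lambda\\ \mu_1=\lambda_1}} m_\lambda K_{\lambda\mu}\dim V_\mu,
\]
and this visibly exceeds $\dim K[S_n]=n!$ as soon as there is at least one pair $(\lambda,\mu)$ with $\mu\tr\lambda$, $\mu_1=\lambda_1$, $\lambda\neq(1^n)$, $K_{\lambda\mu}\neq 0$, and $m_\lambda\geq 1$, beyond the contributions already accounting for the regular representation. The cleanest route is: exhibit one explicit extra partition $\lambda$ with $\lambda\neq(1^n)$ that still satisfies $\lambda_1=1$ — but $\lambda_1=1$ forces $\lambda=(1^n)$, so this is impossible, which tells me the ``extra'' representations all come from $\lambda$ with $\lambda_1\geq 2$, and these are precisely the irreducibles $V_\mu$ with $\mu_1=\lambda_1\geq 2$ that appear with \emph{higher} multiplicity in $G(R_n/K)$ than in $K[S_n]$. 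So for strictness it suffices to find, for some $\mu$ with $\mu_1\geq 2$, that $\sum_{\lambda} m_\lambda K_{\lambda\mu}>\dim V_\mu$. The concrete witness for $n=4$ is the computation $\dim_K G(R_4/K)=32>24=4!$ carried out in Section~\ref{degexample}, and for general $n\geq 4$ I would produce an explicit $(\lambda,\mu)$: e.g. take $\lambda=(2,1^{n-2})$ (so $m_\lambda=n-2$, since $k_0=n-2$, $k_1=1$ in the multinomial, wait — more carefully, with $a_0=2$ we get the relevant multinomial coefficient counting placements of the single ``$1$''-block among $n-1$ slots, giving $m_\lambda=n-1$), and $\mu=(2,1^{n-2})$ itself, giving a contribution $m_\lambda K_{\mu\mu}\dim V_\mu=(n-1)\cdot 1\cdot(n-1)=(n-1)^2$ to the multiplicity-weighted dimension from this $\lambda$ alone, against $\dim V_{(2,1^{n-2})}=n-1$; since $(n-1)^2>n-1$ for $n\geq 3$ and $\mu=(2,1^{n-2})$ does not appear at all in the $\lambda=(1^n)$ block, this already forces $\dim_K G(R_n/K)>n!$ for $n\geq 3$. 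But for $n=3$ the theorem $\ref{degtheorem}$ asserts \emph{equality} $\dim_K G(R_3/K)=6=3!$, so my bookkeeping must be off for $n=3$ — the resolution is that for $n=3$ the partition $\lambda=(2,1)$ has $\mu=(2,1)$ with $\mu_1=\lambda_1=2$, contributing, but this is exactly cancelled/accounted by the fact that $R_3=K[x,y]/(x,y)^2$ is cubic and Theorem~\ref{cubicase} forces rank $6$; so the inequality $(n-1)^2>n-1$ being strict at $n=3$ must be compensated by other $\lambda$'s contributing \emph{less} than in $K[S_3]$, i.e. the naive term-by-term comparison fails and one genuinely needs the full sum. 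I would therefore, for strictness at $n\geq 4$, either invoke the explicit dimension computations ($32,230,1857$ for $n=4,5,6$) together with a monotonicity argument, or — better — directly compare $\dim_K G(R_n/K)=\sum_{\lambda}m_\lambda(\sum_{\mu\tr\lambda,\mu_1=\lambda_1}K_{\lambda\mu}\dim V_\mu)$ with $n!=\sum_{\lambda}(\sum_{\mu\tr\lambda}K_{\lambda\mu}\dim V_\mu)\cdot[\lambda=(1^n)]$ reorganized, and pin down the first $n$ where the former wins.

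\textbf{The main obstacle.} The hard part is the strictness claim for all $n\geq 4$ simultaneously, not just for the small tabulated cases: one must show $\sum_{\mu\tr\lambda,\,\mu_1=\lambda_1} m_\lambda K_{\lambda\mu}\dim V_\mu$ summed over all $\lambda$ strictly exceeds $n!$ while being careful that it does \emph{not} exceed it for $n=3$. The subtle point is that the regular representation sits inside $G(R_n/K)$ via contributions spread across \emph{all} partitions $\lambda$ (through the degeneration map), whereas the ``obvious'' excess from a single partition like $(2,1^{n-2})$ does not by itself prove the total is $>n!$ because other $\lambda$-blocks might a priori contribute fewer copies of some irreducibles than $K[S_n]$ does — and indeed at $n=3$ everything must balance to exactly $6$. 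So I expect the real work is a clean inequality $\dim_K G(R_n/K)\geq n! + (\text{positive correction for } n\geq 4)$, proved either by (i) identifying a specific irreducible $V_\mu$ (with $\mu_1\geq 2$) whose \emph{total} multiplicity $\sum_\lambda m_\lambda K_{\lambda\mu}$ in $G(R_n/K)$ strictly exceeds its multiplicity $\dim V_\mu$ in $K[S_n]$, \emph{together with} confirming that the embedding $K[S_n]\hookrightarrow G(R_n/K)$ from the degeneration argument accounts for $\dim V_\mu$ of those copies; or (ii) exhibiting an extra irreducible constituent not present in $K[S_n]$ at all — but every irreducible is present in $K[S_n]$, so only route (i) works, and the heart of the matter is the containment of the \emph{whole} regular representation (via Section~\ref{sec:maxrank}) combined with one strict inequality of multiplicities, which for $n\geq 4$ I would extract from the hook-length formula forcing $\dim V_{(n-1,1)}=n-1$ to be strictly smaller than its multiplicity in $G(R_n/K)$.
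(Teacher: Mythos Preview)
Your proposal has genuine gaps in both halves.

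For the containment of the regular representation, you abandon combinatorics and invoke the semicontinuity argument of Section~\ref{sec:maxrank}. But upper semicontinuity of the fiber dimension of $\mathcal{F}_n$ gives only $\dim_K G(R_n/K)\geq n!$; it does not produce an embedding of $S_n$-representations, nor does it bound the multiplicity of any individual $V_\mu$ from below. To extract that you would have to decompose $\mathcal{F}_n$ into its $V_\mu$-isotypic components (using semisimplicity of $K[S_n]$ in this characteristic) and apply semicontinuity to each multiplicity sheaf separately---a step you never state. The paper instead stays entirely inside Theorem~\ref{thm:main}: for each $\mu$ it picks the \emph{single} partition $\lambda=(\mu_1,1,\dots,1)$ and shows $m_\lambda K_{\lambda\mu}\geq\dim V_\mu$. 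Here $m_\lambda=\binom{n-1}{\mu_1-1}$, and since the first row of any semistandard tableau of shape $\mu$ and content $\lambda$ is forced to be all $1$'s, one has $K_{\lambda\mu}=\dim V_{\mu'}$ with $\mu'=(\mu_2,\dots,\mu_n)$. The hook formula then reduces the inequality to $H_1\geq n(\mu_1-1)!$, where $H_1$ is the product of the first-row hook numbers of $\mu$; since $n(\mu_1-1)!$ is the first-row hook product of $(\mu_1,1^{n-\mu_1})$, this follows by iterating Lemma~\ref{l:hook}.

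For strictness, your discussion slides between ``multiplicity of $V_\mu$'' and ``contribution to $\dim_K G(R_n/K)$'' and never resolves why your candidate fails at $n=3$. The paper's witness is $\mu=(n-2,2)$: the only partitions $\lambda$ with $\mu\tr\lambda$ and $\lambda_1=\mu_1=n-2$ are $(n-2,1,1)$ and $(n-2,2)$. The first is exactly the $\lambda$ used in the previous paragraph, already contributing multiplicity at least $\dim V_\mu$; the second adds a further $m_{(n-2,2)}K_{\mu\mu}\geq1$. The argument needs $(n-2,2)$ to be a partition, i.e.\ $n-2\geq2$, which is precisely $n\geq4$---so the $n=3$ worry disappears for free.
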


As the proof of this theorem shows, as $n$ gets large, the regular
representation is only a small subrepresentation, and so the bound in
Theorem \ref{degtheorem} is a weak one.
%Nonetheless the corollary accomplishes our goal of showing that $S_n$-closures 
%of ring extensions can have ranks which are larger than one might initially expect (???).
\begin{lemma}
\label{l:hook}
Let $\epsilon$ and $\tau$ be two partitions of $n$.  Suppose
$\tau_{k-1}>\tau_k=0$ and that
$\epsilon=(\tau_1,\dots,\tau_{i-1},\tau_i-1,\tau_{i+1},\dots,\tau_{k-1},1)$
for some $i>1$.  Let $E_1$ and $T_1$ be the product of the hook
numbers of the boxes in the first row of the Young diagram of
$\epsilon$ and $\tau$, respectively.  Then $T_1\geq E_1$.
\end{lemma}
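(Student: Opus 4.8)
The plan is to write $T_1$ and $E_1$ each as a product of first-row hook numbers and then compare the two products factor by factor. For a partition $\lambda$, the hook of the box $(1,j)$ consists of the $\lambda_1-j$ boxes to its right in the first row together with the $c_j(\lambda):=|\{l:\lambda_l\ge j\}|$ boxes lying in its column, so its hook number equals $\lambda_1-j+c_j(\lambda)$. Since $i>1$, the first rows of $\tau$ and $\epsilon$ coincide, so $\epsilon_1=\tau_1$ and hence $T_1=\prod_{j=1}^{\tau_1}\bigl(\tau_1-j+c_j(\tau)\bigr)$ and $E_1=\prod_{j=1}^{\tau_1}\bigl(\tau_1-j+c_j(\epsilon)\bigr)$ are products with the same number of factors; it then remains only to track how the column heights $c_j$ change in passing from $\tau$ to $\epsilon$.

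Next I would observe that the Young diagram of $\epsilon$ is obtained from that of $\tau$ by deleting the box $(i,\tau_i)$ and appending a one-box row at the bottom. For the displayed tuple to be a (weakly decreasing) partition one needs $\tau_i-1\ge\tau_{i+1}$, which together with $\tau_{k-1}\ge 1$ also forces $\tau_i\ge 2$; hence the deleted box and the added box lie in the distinct columns $\tau_i$ and $1$, so that $c_1(\epsilon)=c_1(\tau)+1$, $c_{\tau_i}(\epsilon)=c_{\tau_i}(\tau)-1$, and $c_j(\epsilon)=c_j(\tau)$ for every other $j$. (In the degenerate case $\tau_i=1$, which forces $\epsilon=\tau$, the two changes cancel and $E_1=T_1$.)

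With this, the comparison is immediate. Writing $a=\tau_1-1+c_1(\tau)$ and $b=\tau_1-\tau_i+c_{\tau_i}(\tau)$ for the hook numbers of the boxes $(1,1)$ and $(1,\tau_i)$ of $\tau$, every factor of the ratio $E_1/T_1$ cancels except those coming from $j=1$ and $j=\tau_i$, so $E_1/T_1=(a+1)(b-1)/(ab)=1+(b-a-1)/(ab)$. The quantity $\tau_1-j+c_j(\tau)$ is strictly decreasing in $j$ (the first summand decreases, the second is non-increasing), and $\tau_i\ge 1$, so $b\le a$; therefore $b-a-1<0$, which gives $E_1<T_1$, and in particular $T_1\ge E_1$.

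I expect the only step needing genuine care to be the middle one: verifying that the hypothesis ``$\epsilon$ is the displayed partition'' is exactly the assertion that $(i,\tau_i)$ is a removable corner of $\tau$, and that the removed box and the added box then necessarily sit in different columns (which fails only in the degenerate case noted above). Once that bookkeeping is in place, the inequality is the one-line estimate of the previous paragraph.
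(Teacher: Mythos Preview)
Your proof is correct and follows essentially the same approach as the paper: both identify that only the first-row hooks in columns $1$ and $\tau_i$ change (by $+1$ and $-1$ respectively), giving $E_1/T_1=(h_1+1)(h_2-1)/(h_1h_2)$, and then use $h_1\ge h_2$ to conclude. Your write-up is in fact more careful than the paper's, since you justify why $\tau_i\ge 2$ (so the two affected columns are distinct) and why the first-row hooks are strictly decreasing, whereas the paper leaves these points implicit.
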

\begin{proof}
  Let $h_1$ and $h_2$ be the hook numbers of the first and
  $\tau_i^{\phantom{1} th}$ box in the first row of the Young diagram
  of $\tau$, respectively.  Then
\[
E_1=T_1\frac{(h_1+1)(h_2-1)}{h_1h_2}.
\]
Expanding $(h_1+1)(h_2-1)$, and noting that $h_1>h_2$, yields
the desired inequality.
\end{proof}

\begin{prooft}
%[Proof of Theorem \ref{cor:reg}]
We must show that
\[
\sum_{\substack{\mu\tr\lambda\\ \mu_1=\lambda_1}}m_\lambda K_{\mu\lambda}\geq\dim V_\mu.
\]
Fix $\mu$ and let $\lambda=(\mu_1,1,\dots,1)$.  We in fact prove that $m_\lambda K_{\mu\lambda}\geq\dim V_\mu$.

If $\mu=(n)$, then $\lambda=\mu$ and $m_\lambda K_{\mu\lambda}=1=\dim
V_\mu$.  Now suppose $\mu_1<n$.  Let $\mu'=(\mu_2,\dots,\mu_n)$ and
$\lambda'=(\lambda_2,\dots,\lambda_n)$.  Since $\mu_1=\lambda_1$, the
first row of every semi-standard Young tableau of shape~$\mu$ and
content $\lambda$ consists entirely of 
%$\mu_1$ 
1's.  Therefore,
\[
K_{\mu\lambda}=K_{\mu'\lambda'}=\dim V_{\mu'},
\]
where the second equality comes from the paragraph following Theorem
\ref{thm:young}.  Let $H$ be the product of the hook numbers of the
Young diagram of $\mu$ and let $H_1$ be the product of the hook
numbers of the boxes in the first row.  Since
\[
\dim V_\mu=\frac{n!}{H}=\dim V_{\mu'}\cdot\frac{n!}{H_1(n-\mu_1)!}
\]
and $m_\lambda=\binom{n-1}{\mu_1-1}$, we need only show $H_1\geq
n(\mu_1-1)!$.  Note that the product of the hook numbers of the boxes
in the first row of the Young diagram of $\lambda$ is $n(\mu_1-1)!$.
The first part of the theorem therefore follows by successively applying Lemma~\ref{l:hook}.

%To finish the proof of the corollary, note 
Note that if $n\geq4$, then letting $\mu=(n-2,2)$ and
$\lambda=(n-2,1,1)$, we obtain
\[
\sum_{\substack{\mu\tr\lambda\\ \mu_1=\lambda_1}}m_\lambda K_{\mu\lambda}=m_\lambda K_{\mu\lambda}+m_\mu K_{\mu\mu}>
\dim V_\mu,
%\qedhere
\]
which shows that the regular representation is a proper
subrepresentation.
\end{prooft}

\subsection{Examples}

\label{sec:ex}
%The cases $n=3$ and $n=4$ of the structure theorem are illustrated in the table below.  Recall (in down to earth terms) 
%what $m_\lambda$ is: you remove the first row of $\lambda$ and then count how many remaining rows have 1 box, how 
%many have 2 boxes, etc. We also want to count how many have 0 boxes.  We then take $(n-1)!$ divided by the product of 
%these numbers, e.g. if $\lambda$ is 
%\[\yng(2,2),\]then $m_\lambda=\frac{3!}{2!0!2!}=3$ (remember that $\lambda$ is really $(2,2,0,0)$).
In this section we illustrate Theorem \ref{thm:main} in the cases $n=3$ and $n=4$.  
%The cases $n=3$ and $n=4$ of Theorem \ref{thm:main} are illustrated in tables below.
The following table collects the relevant information when $n=3$.
\[
\begin{array}{|l | c | l | c | c | c|}\hline
\mu & \dim V_\mu & \lambda\textrm{\ s.t.\ } \lambda_1=\mu_1\textrm{\
  and\ } \mu\tr\lambda & m_\lambda & K_{\mu\lambda} 
& m_\lambda K_{\mu\lambda}\\
\hline
 & & & & & \\[-.085in]
\yng(3) & 1 & \hspace{.51in}\yng(3) & 1 & 1 & 1\\[.05in]\hline
 & & & & & \\[-.085in]
 \yng(2,1) & 2 & \hspace{.51in}\yng(2,1) & 2 & 1 & 2\\[.05in] \hline
 & & & & & \\[-.085in]
\yng(1,1,1) & 1 & \hspace{.51in}\yng(1,1,1) & 1 & 1 & 1\\[.05in] \hline
\end{array}
\]
We see that for each partition $\mu$ of $3$, the dimension of $V_\mu$ agrees with $m_\mu K_{\mu\mu}$ and so 
Theorem \ref{thm:main} shows that $G(R_3/K)$ is the regular representation.

The cases $n\leq 3$ are rather uninteresting since for such $n$, whenever $\mu$ and $\lambda$ are partitions of $n$ with 
$\mu$ dominating $\lambda$ and $\lambda_1=\mu_1$, we in fact have $\mu=\lambda$.  When $n=4$, however, there exists a 
single pair $(\mu,\lambda)$ of partitions satifying the above conditions for which $\mu$ and $\lambda$ are distinct.  
As shown in Theorem \ref{cor:reg}, this forces $G(R_4/K)$ to properly contain a copy of the regular 
representation.  The $n=4$ case is summarized in the table below.
\[
\begin{array}{|l | c | l | c | c | c|}\hline
\mu & \dim V_\mu & \lambda\textrm{\ s.t.\ } \lambda_1=\mu_1\textrm{\ and\ } \mu\tr\lambda & m_\lambda & K_{\mu\lambda} 
& m_\lambda K_{\mu\lambda}\\
\hline
 & & & & & \\[-.085in] 
\yng(4) & 1 & \hspace{.5in}\yng(4) & 1 & 1 & 1\\[.05in]\hline
 & & & & & \\[-.085in] 
 \yng(3,1) & 3 & \hspace{.5in}\yng(3,1) & 3 & 1 & 3\\[.05in]\hline
 & & & & & \\[-.085in] 
 \yng(2,2) & 2 & \hspace{.5in}\yng(2,2) & 3 & 1 & 3\\[.03in]
  & & & & & \\[-.085in] 
  & & \hspace{.5in}\yng(2,1,1) & 3 & 1 &3 \\[.05in]\hline
 & & & & & \\[-.085in] 
 \yng(2,1,1) & 3 & \hspace{.5in}\yng(2,1,1) & 3 & 1 & 3\\[.05in]\hline
 & & & & & \\[-.085in] 
 \yng(1,1,1,1) & 1 & \hspace{.5in}\yng(1,1,1,1) & 1 & 1 & 1\\[.05in] \hline
\end{array}
\]
We see then from Theorem \ref{thm:main} that $G(R_4/K)$ contains
exactly $\dim V_\mu$ copies of $V_\mu$ for every partition $\mu$ of
$4$ other than $\mu=(2,2)$.  We see, however, that $G(R_4/K)$ contains
$6$ copies of $V_{(2,2)}$.  It follows that $G(R_4/K)$ is the regular
representation direct sum 4 copies of $V_{(2,2)}$.  Since $V_{(2,2)}$
is 2-dimensional, we see $G(R_4/K)$ has dimension $24+8=32$.

Let us now reconcile the decomposition of $G(R_4/K)$ given by Theorem
\ref{thm:main} with the explicit decomposition given in Section
\ref{degexample}.  We make no assumption here on the characteristic of
$K$.  Recall that $T(x)$ has generators $x_i$ for $1\leq i\leq 4$
satisfying the relation $\sum x_i=0$ and that $\sigma\in S_4$ acts by
$\sigma(x_i)=x_{\sigma(i)}$.  We see then that $T(x)$ is the standard
representation; that is, $T(x)\cong V_{(3,1)}$.  Recall that $U(x)$
is a two-dimesional vector space generated by the equivalence classes
of
\[
x_1y_2+x_2y_1+x_3y_4+x_4y_3 \quad\textrm{and}\quad x_1y_3+x_3y_1+x_2y_4+x_4y_2
\]
with $S_4$-action given by $\sigma(x_i)=x_{\sigma(i)}$ and
$\sigma(y_i)=y_{\sigma(i)}$.  Letting $H$ be the subgroup of $S_4$
generated by $(12)(34)$ and $(13)(24)$, we see that $U(x)$ is the
$S_4$-representation obtained from the quotient $S_4\to
S_4/H\cong S_3$ and the standard representation of $S_3$.  Hence,
$U(x)$ is $V_{(2,2)}$.  It is clear that $W(x,y,z)$ is the sign
representation $V_{(1,1,1,1)}$.  Lastly, one easily checks that the composition factors of
$V(x,y)$ and $V_{(2,2)} \oplus V_{(2,1,1)}$ are the same; this follows, e.g., from an explicit
computation using Brauer characters (see \cite[Chpt.~7, Def.~2.7]{modular}).  We see then that $G(R_4/K)$ has the same composition
factors as
\[
V_{(4)} \oplus V_{(3,1)}^{\oplus 3} \oplus V_{(2,2)}^{\oplus 6} \oplus V_{(2,1,1)}^{\oplus 3} \oplus V_{(1,1,1,1)};
\]
that is, if we weaken Theorem \ref{thm:main} to only require that the
two $S_n$-modules have the same composition factors, then it holds in
arbitrary characteristic for $n\leq 4$.

%\section{Examples of torsion for rings of rank $n$ over
%  $\Z$}\label{orderexample}

\section{Open questions}\label{sec:open}

%In this section we pose several questions.
There are several questions about $S_n$-closures that have
not been treated in this article, which beg for further investigation.  
First, we have the natural question:

\begin{question}
\label{q:geom}
\emph{
Is there a geometric definition of the $S_n$-closure?
}
\end{question}
The definition we have given in the introduction 
is rather algebraic.  A more geometric
definition would perhaps make various
%the functoriality of the $S_n$-closure construction 
properties of the $S_n$-closure (such as the fact that it commutes with base change!) more apparent.

% as explained at the end
%of the introduction, only applies to $n$-coverings $X/Y$.  If a more
%geometric definition were given, then presumably one would be able to
%define the $S_n$-closure of a much wider class of morphisms of
%schemes.
Second, we have only proven Theorem \ref{thm:main} in the case where
the field $K$ has characteristic prime to $n!$\,.  However, we saw in
Section~\ref{degexample} that even when $K$ has characteristic 2 or
3, the dimension of $G(R_4/K)$ remains 32, which is precisely what
Theorem~\ref{thm:main} would imply in good characteristic.  In
Section~\ref{sec:ex}, we saw in fact that 
$G(R_4/K)$ possesses 
the same composition factors in any characteristic.
%We therefore pose the following question.
Does the analogous statement hold for $G(R_n/K)$ for higher values of $n$?
\begin{question}
\label{q:char}
\emph{
Is it true, for a field $K$ of arbitrary characteristic, that 
\[ G(R_n/K) \,\,\,\,\,\,\mbox{ and }\,\,\,\,\,
\bigoplus_{\substack{\mu\triangleright\lambda\\ \mu_1=\lambda_1}}m_\lambda K_{\mu\lambda}V_\mu
\]
possess the same composition factors?
}
\end{question}
%This question stems from our observations in Sections \ref{degexample}
%and \ref{sec:ex}.  Namely, although Theorem \ref{thm:main} applies
%only when the characteristic of $K$ is prime to $n!$, we saw in
%Section \ref{degexample} that even when $K$ has characteristic 2 or 3,
%the dimension of $G(R_4/K)$ is 32, which is what Theorem
%\ref{thm:main} implies in good characteristic.  In Section
%\ref{sec:ex} we saw that the reason the dimension of $G(R_4/K)$ is
%``correct'' in bad characteristic is because $G(R_4/K)$ still has the
%``correct'' composition factors.
%We therefore pose the following question.

We have shown that the $S_n$-closure of an algebra $A$ of rank $n$
over a field $K$ has dimension $n!$ in many natural cases, and that
this dimension in any case is always bounded above by
$\dim_K(G(R_n/K))$.  What about a lower bound?  
One would guess that
the rank could never go {\it below} $n!$, although this does not seem
trivial to prove.
\begin{question}
\label{q:etale}
\emph{If $A$ is a ring of rank $n$ over a field $K$, then is the rank of
  $G(A/K)$ {at least} $n!$ ? }
\end{question}
While we do not know the answer to this question in general, we show below
that the answer is ``yes'' provided that $n$ is small and the characteristic of
$K$ is not 2 or 3:
\begin{proposition}
\label{prop:etale7}
If $n\leq7$, and $A$ is a ring of rank $n$ over a field $K$ having
characteristic not $2$ or $3$, then $G(A/K)$ has rank at least $n!$\,.
\end{proposition}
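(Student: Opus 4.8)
The plan is to deduce this from an upper semicontinuity argument on the moduli space of rank $n$ algebras, combined with the \'etale case (Theorem~\ref{etalecase}) and the fact that, for $n\le 7$ and $\mathrm{char}\,K\ne 2,3$, every rank $n$ algebra is a flat degeneration of an \'etale one. First, by functoriality of the $S_n$-closure we have $\dim_K G(A/K)=\dim_{\bar K}G(A\otimes_K\bar K/\bar K)$, so we may assume $K$ is algebraically closed.

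Next I would recall the set-up of Section~\ref{sec:maxrank}: the moduli space $\mathfrak{B}_{n,K}$ of pairs $(\mathcal{A},\phi)$ carries a coherent sheaf $\mathcal{F}_n$ whose fiber over the point classifying a rank $n$ algebra $R$ over a field $C$ is $G(R/C)$, this being $f^*\mathcal{F}_n$ for the classifying morphism $f:\Spec C\to\mathfrak{B}_{n,K}$. The function $y\mapsto\dim_{\kappa(y)}(\mathcal{F}_n\otimes\kappa(y))$ is upper semicontinuous on $\mathfrak{B}_{n,K}$, so the locus
\[
Z_{n!}=\{\,y\in\mathfrak{B}_{n,K}\,:\,\dim_{\kappa(y)}(\mathcal{F}_n\otimes\kappa(y))\ge n!\,\}
\]
is closed. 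It suffices to show $Z_{n!}=\mathfrak{B}_{n,K}$, since the point classifying $A$ then lies in $Z_{n!}$.

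Let $U\subseteq\mathfrak{B}_{n,K}$ be the open subscheme where the algebra in question is \'etale (equivalently, where $\Disc$ is a unit). Since $K=\bar K$, the only \'etale $K$-algebra of rank $n$ is $K^n$, so $U$ is irreducible, being a single $\GL_{n,K}$-orbit; and by Theorem~\ref{etalecase}, $\mathcal{F}_n|_U$ is locally free of rank $n!$, so $U\subseteq Z_{n!}$. Hence $\overline U\subseteq Z_{n!}$, and it remains only to prove that $U$ is dense in $\mathfrak{B}_{n,K}$.

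Density of $U$ is equivalent to the assertion that every commutative $K$-algebra of dimension $n$ is \emph{smoothable}, i.e.\ lies in the closure of the locus of split \'etale algebras: realizing such an algebra as $\mathcal{O}_Z$ for a length-$n$ subscheme $Z$ of some $\mathbb{A}^N$ and choosing a framing, smoothability of $Z$ in the relevant Hilbert scheme translates into the statement that its class in $\mathfrak{B}_{n,K}$ lies in $\overline U$. For $n\le 7$ this smoothability is known, over any algebraically closed field of characteristic different from $2$ and $3$; this (rather than any feature of our own construction) is what forces the hypotheses $n\le 7$ and $\mathrm{char}\,K\ne 2,3$, and correctly invoking the relevant results on Hilbert schemes of at most seven points is the main point, and the main obstacle, of the argument. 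Granting it, $U$ is dense, so $Z_{n!}=\overline{U}=\mathfrak{B}_{n,K}$, and evaluating at the point classifying $A$ gives $\dim_K G(A/K)\ge n!$, as desired.
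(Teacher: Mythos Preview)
Your proposal is correct and follows essentially the same approach as the paper: upper semicontinuity of the fiber dimension of $\mathcal{F}_n$ on $\mathfrak{B}_{n,K}$, combined with Theorem~\ref{etalecase} on the \'etale locus and density of that locus for $n\le 7$. The only cosmetic difference is in how density is justified: the paper cites directly that $\mathfrak{B}_{n,K}$ is irreducible for $n\le 7$ in characteristic $\ne 2,3$ (\cite{CEVV}, together with \cite[Cor.~6.7]{moduli}), so any nonempty open subset is dense; you instead phrase this as smoothability of length-$n$ schemes and pass through the Hilbert scheme, which amounts to the same input result. Your preliminary base change to $\bar K$ is harmless but not needed.
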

\begin{proof}
  By \cite[Cor.~6.7]{moduli} and the fact that $\mathfrak{B}_{n,K}$ is
  irreducible (\cite[Thm.~1.1]{CEVV} which assumes $K$ does not have 
  characteristic $2$ or $3$), we see that the \'etale locus is
  dense in $\mathfrak{B}_{n,K}$.  Theorem~\ref{etalecase} shows that
  if $A$ is \'etale over $K$, then the rank of $G(A/K)$ is $n!$.
  Therefore, an upper semi-continuity argument, similar to the one
  given in Theorem \ref{maxrank}, finishes the proof.
\end{proof}

%\noindent
The argument of Proposition \ref{prop:etale7} does not extend to higher values of $n$
because it is known that the \'etale locus is {\it not} dense in
$\mathfrak{B}_{n,K}$ for $n\geq 8$; see \cite[Prop.~9.6]{moduli}.

%Finally, we remark that, 
Another question stems from the following.  In the Galois theory of fields, one often
constructs Galois closures through certain natural intermediate
extensions.  Namely, suppose $L=K[x]/f(x)$ is a separable field
extension of degree $n$ with associated Galois group $S_n$, and
$\tilde L$ is the splitting field of $f$ (and thus the Galois closure
of $L$ over $K$).  Then $f$ has a root $\alpha_1$ in $L$, and $f$ has
$n$ roots $\alpha_1,\ldots,\alpha_n$ in the splitting field
$\tilde L$.
We may thus construct $\tilde L$ through a tower of extensions
$$L\,=\,L^{(1)}\,\,\subset\,\, L^{(2)}\,\,\subset\,\,\, \cdots \,\,\,\subset\,\, L^{(n)}=\,\tilde L$$ where
$L^{(r)}:=L(\alpha_1,\ldots,\alpha_r)$ has
degree $n(n-1)\cdots(n-r+1)$ over $L$.  The fields $L^{(r)}$ are
well-defined up to isomorphism and independent of the ordering of the
roots $\alpha_1,\ldots,\alpha_r$ of~$f$.  
%Is there a way to define
%these intermediate extensions for a general locally free ring $A$
%of rank $n$ over $B$?

\begin{question}{\em Let $A$ be a ring of rank $n$ over $B$.  Is there
    a construction of ``intermediate $S_n$-closures''
    $$A=G^{(1)}(A/B),\,\,\,\,\, G^{(2)}(A/B),\,\,\,\, \,\ldots\,\,\,, 
\,\,\,\,\, G^{(n)}(A/B)=G(A/B),$$
    which commute with base change and 
    such that in the case of an $S_n$-extension of fields $L/K$ of
    degree $n$, we have $G^{(r)}(L/K)\cong L^{(r)}$?}
\end{question}

\noindent
A natural method to proceed would be to construct $G^{(r)}(A/B)$ as a
quotient of $A^{\otimes r}$ by an appropriate ideal $I^{(r)}(A,B)$,
where $I^{(n)}(A,B)$ coincides with $I(A,B)\subset A^{\otimes n}$.

Finally, it is natural to ask whether Galois type closures can be
obtained for groups other than~$S_n$.  If $G\subset S_n$ is a 
permutation group on $n$ elements, then there should be an analogous 
way to define a ``$G$-closure'' for rank $n$ rings with appropriate properties.  
In the case of separable field extensions $A/B$ where $\Gal(\tilde A/B)\subset G$, this should then yield
a $B$-algebra isomorphic to $\tilde A^{|G|/\deg(A/B)}$ as in Theorem~\ref{fieldcase}.
%More generally, we ask:

\begin{question}{\em If $G\subset S_n$ is a permutation group, what is the natural class of rings/schemes for 
which functorial $G$-closures can be defined?}
\end{question}

\subsection*{Acknowledgments}

We thank O.\ Biesel, J.\ Blasiak, T.\ Church, B.\ de Smit, J.\ Ellenberg, D.\ Erman, W.\ Ho,
K.\ Kedlaya, H.\ Lenstra, J.\ Parson, B.\ Poonen, S. Sun, J-P.\ Serre, C.\ Skinner, N.\
Snyder, L.\ Taelman, B.\ Viray, and M.\ Wood for numerous valuable conversations,
comments, and suggestions which helped shape this article.  We also thank the anonymous referee for a very careful reading and extremely helpful comments.  

This paper was presented at a special Intercity Number Theory Day held in Leiden in honor of the 60th birthday of Hendrik Lenstra, and it is a pleasure to dedicate this paper to him.

%\vspace{1.65in}
\vspace{1in}

%\vspace{.65in}

%Department of Mathematics, Princeton University, Princeton, N.J. 08544

\end{document}